\begin{document}
\newtheorem{theo}{Theorem}
\newtheorem{theoconj}{Theorem-Conjecture}
\newtheorem{exam}{Example}
\newtheorem{coro}{Corollary}
\newtheorem{defi}{Definition}
\newtheorem{prob}{Problem}
\newtheorem{lemm}{Lemma}
\newtheorem{prop}{Proposition}
\newtheorem{rem}{Remark}
\newtheorem{conj}{Conjecture}
\newtheorem{calc}{}

\def\gru{\mu} 
\def\pg{{ \sf S}}               
\def\TS{\mathlarger{\bf T}}                
\def\NB{{\mathlarger{\bf N}}}
\def\group{{\sf G}}
\def\NLL{{\rm NL}}   

\def\plc{{ Z_\infty}}    
\def\pola{{u}}      
\newcommand\licy[1]{{\mathbb P}^{#1}} 
\newcommand\aoc[1]{Z^{#1}}     
\def\HL{{\rm Ho}}     
\def\NLL{{\rm NL}}   

\def\Z{\mathbb{Z}}                   
\def\Q{\mathbb{Q}}                   
\def\C{\mathbb{C}}                   
\def\N{\mathbb{N}}                   
\def\uhp{{\mathbb H}}                
\def\A{\mathbb{A}}                   
\def\dR{{\rm dR}}                    
\def\F{{\cal F}}                     
\def\Sp{{\rm Sp}}                    
\def\Gm{\mathbb{G}_m}                 
\def\Ga{\mathbb{G}_a}                 
\def\Tr{{\rm Tr}}                      
\def\tr{{{\mathsf t}{\mathsf r}}}                 
\def\spec{{\rm Spec}}            
\def\ker{{\rm ker}}              
\def\GL{{\rm GL}}                
\def\ker{{\rm ker}}              
\def\coker{{\rm coker}}          
\def\im{{\rm Im}}               
\def\coim{{\rm Coim}}            
\def\p{{\sf  p}}
\def\U{{\cal U}}   

\def\weig{{\nu}}
\def\r{{ r}}                       
\def\k{{\sf k}}                     
\def\ring{{\sf R}}                   
\def\X{{\sf X}}                      
\def\Ua{{   L}}                      
\def\T{{\sf T}}                      
\def\asone{{\sf A}}                  

\def\Ts{{\sf S}}
\def\cmv{{\sf M}}                    
\def\BG{{\sf G}}                       
\def\podu{{\sf pd}}                   
\def\ped{{\sf U}}                    
\def\per{{\bf  P}}                   
\def\gm{{  A}}                    
\def\gma{{\sf  B}}                   
\def\ben{{\sf b}}                    

\def\Rav{{\mathfrak M }}                     
\def\Ram{{\mathfrak C}}                     
\def\Rap{{\mathfrak G}}                     

\def\mov{{\sf  m}}                    
\def\Yuk{{\sf C}}                     
\def\Ra{{\sf R}}                      
\def\hn{{ h}}                         
\def\cpe{{\sf C}}                     
\def\g{{\sf g}}                       
\def\t{{\sf t}}                       
\def\pedo{{\sf  \Pi}}                  

\def\Der{{\rm Der}}                   
\def\MMF{{\sf MF}}                    
\def\codim{{\rm codim}}                
\def\dim{{\rm    dim}}                
\def\Lie{{\rm Lie}}                   
\def\gg{{\mathfrak g}}                

\def\u{{\sf u}}                       

\def\imh{{  \Psi}}                 
\def\imc{{  \Phi }}                  
\def\stab{{\rm Stab }}               
\def\Vec{{\rm Vec}}                 
\def\prim{{\rm  0}}                  

\def\Fg{{\sf F}}     
\def\hol{{\rm hol}}  
\def\non{{\rm non}}  
\def\alg{{\rm alg}}  
\def\tra{{\rm tra}}  

\def\bcov{{\rm \O_\T}}       

\def\leaves{{\cal L}}        

\def\cat{{\cal A}}              
\def\im{{\rm Im}}               

\def\pn{{\sf p}}              
\def\Pic{{\rm Pic}}           
\def\free{{\rm free}}         
\def \NS{{\rm NS}}    
\def\tor{{\rm tor}}
\def\codmod{{\xi}}    

\def\GM{{\rm GM}}

\def\perr{{\sf q}}        
\def\perdo{{\cal K}}   
\def\sfl{{\mathrm F}} 
\def\sp{{\mathbb S}}  

\newcommand\diff[1]{\frac{d #1}{dz}} 
\def\End{{\rm End}}              

\def\sing{{\rm Sing}}            
\def\cha{{\rm char}}             
\def\Gal{{\rm Gal}}              
\def\jacob{{\rm jacob}}          
\def\tjurina{{\rm tjurina}}      
\newcommand\Pn[1]{\mathbb{P}^{#1}}   
\def\P{\mathbb{P}}
\def\Ff{\mathbb{F}}                  

\def\O{{\cal O}}                     

\def\ring{{\mathsf R}}                         
\def\R{\mathbb{R}}                   

\newcommand\ep[1]{e^{\frac{2\pi i}{#1}}}
\newcommand\HH[2]{H^{#2}(#1)}        
\def\Mat{{\rm Mat}}              
\newcommand{\mat}[4]{
     \begin{pmatrix}
            #1 & #2 \\
            #3 & #4
       \end{pmatrix}
    }                                
\newcommand{\matt}[2]{
     \begin{pmatrix}                 
            #1   \\
            #2
       \end{pmatrix}
    }
\def\cl{{\rm cl}}                

\def\hc{{\mathsf H}}                 
\def\Hb{{\cal H}}                    
\def\pese{{\sf P}}                  

\def\PP{\tilde{\cal P}}              
\def\K{{\mathbb K}}                  

\def\M{{\cal M}}
\def\RR{{\cal R}}
\newcommand\Hi[1]{\mathbb{P}^{#1}_\infty}
\def\pt{\mathbb{C}[t]}               
\def\gr{{\rm Gr}}                
\def\Im{{\rm Im}}                
\def\Re{{\rm Re}}                
\def\depth{{\rm depth}}
\newcommand\SL[2]{{\rm SL}(#1, #2)}    
\newcommand\PSL[2]{{\rm PSL}(#1, #2)}  
\def\Resi{{\rm Resi}}              

\def\L{{\cal L}}                     
\def\Aut{{\rm Aut}}              
\def\any{R}                          
\newcommand\ovl[1]{\overline{#1}}    

\newcommand\mf[2]{{M}^{#1}_{#2}}     
\newcommand\mfn[2]{{\tilde M}^{#1}_{#2}}     

\newcommand\bn[2]{\binom{#1}{#2}}    
\def\ja{{\rm j}}                 
\def\Sc{\mathsf{S}}                  
\newcommand\es[1]{g_{#1}}            
\newcommand\V{{\mathsf V}}           
\newcommand\WW{{\mathsf W}}          
\newcommand\Ss{{\cal O}}             
\def\rank{{\rm rank}}                
\def\Dif{{\cal D}}                   
\def\gcd{{\rm gcd}}                  
\def\zedi{{\rm ZD}}                  
\def\BM{{\mathsf H}}                 
\def\plf{{\sf pl}}                             
\def\sgn{{\rm sgn}}                      
\def\diag{{\rm diag}}                   
\def\hodge{{\rm Hodge}}
\def\HF{{ F}}                                
\def\WF{{ W}}                               
\def\HV{{\sf HV}}                                
\def\pol{{\rm pole}}                               
\def\bafi{{\sf r}}
\def\id{{\rm id}}                               
\def\gms{{\sf M}}                           
\def\Iso{{\rm Iso}}                           

\def\hl{{\rm L}}    
\def\imF{{\rm F}}
\def\imG{{\rm G}}

\def\TS{{\mathlarger{\mathlarger{\bf T}}}}
\def\rootsG{{\sf G}}
\def\NLL{{\rm NL}}   
\def\codmod{{\xi}}    
\def\P{{\mathbb P}}
\def\mydim{{\sf H}}    
\def\intdim{{\sf K}}   

\def\cf{r}   

\begin{center}
{\LARGE\bf  Why should one compute periods of algebraic cycles? 
}
\\
\vspace{.25in} {\large {\sc Hossein Movasati}}\footnote{
Instituto de Matem\'atica Pura e Aplicada, IMPA, Estrada Dona Castorina, 110, 22460-320, Rio de Janeiro, RJ, Brazil,
{\tt www.impa.br/$\sim$ hossein, hossein@impa.br}}
\end{center}

\begin{abstract}
In this article we show how the data of integrals of algebraic differential forms over algebraic cycles can be used
in order to prove that algebraic and Hodge cycle deformations of a given algebraic cycle are equivalent.   
As an example, we  prove that most of the Hodge and algebraic cycles of the Fermat sextic fourfold 
cannot be deformed in the underlying parameter space.  
We then take a difference of two  linear cycles inside the Fermat variety with intersection of codimension two in both cycles, 
and  gather  evidences 
that the Hodge locus corresponding to  this is smooth and reduced. This implies 
the existence of new algebraic cycles in the Fermat variety whose existence is predicted by 
the Hodge conjecture for all hypersurfaces, but not the Fermat variety itself. 
\end{abstract}

\begin{minipage}{6.08in}
\raggedleft\it\small 
... computer assisted proofs, as well as computer \\
\hfil unassisted ones, can be good or bad. A good proof \hfil \\
\hfil is a proof that makes us wiser,  (Y. Manin). \\
\end{minipage}
\section{Introduction}
A quick answer to the question of the title is the following: if we compute such numbers, put them inside a certain matrix  
and compute its rank, then either we will be able to verify the Hodge conjecture for deformed Hodge cycles, or more interestingly, 
we will find a right place to
look for counterexamples for the Hodge conjecture. 
In direction of the second situation, we collect evidences to Conjecture \ref{CanISayFinally?},  and for the first 
situation we prove Theorem \ref{27.03.17}.  
In the present text all homologies with $\Z$ coefficients
are up to torsion and all varieties are defined over complex numbers. Let $n$ be an even number. For an integer 
$-1\leq m\leq \frac{n}{2}$ let $\P^{\frac{n}{2}},\check\P^{\frac{n}{2}}\subset \P^{n+1}$ be
projective spaces given by:
\begin{equation}
\label{CarolinePilar}
 \P^{\frac{n}{2}}:  
\left\{
 \begin{array}{l}
 x_{0}-\zeta_{2d}^{}x_{1}=0,\\
 x_{2}-\zeta_{2d}^{} x_{3}=0,\\
 x_{4}-\zeta_{2d}^{} x_{5}=0,\\
 \cdots \\
 x_{n}-\zeta_{2d}^{} x_{{n+1}}=0.
 \end{array}
 \right.\ \ \ \ \ \ \ 
 \check\P^{\frac{n}{2}}:  
\left\{
 \begin{array}{l}
 x_{0}-\zeta_{2d}^{}x_{1}=0,\\
  \cdots \\
 x_{2m}-\zeta_{2d}^{} x_{2m+1}=0,\\
 x_{2m+2}-\zeta_{2d}^{3} x_{2m+3}=0,\\
 \cdots \\
 x_{n}-\zeta_{2d}^{3} x_{{n+1}}=0.
 \end{array}
 \right. 
 \end{equation}
 where $\zeta_{2d}:=e^{\frac{2\pi \sqrt{-1}}{2d}}$. 
These are linear algebraic cycles in the Fermat variety $X^d_{n}\subset \Pn {n+1}$ given  
by the homogeneous polynomial $x_0^d+x_1^d+\cdots+x_{n+1}^d=0$, and satisfy $\P^\frac{n}{2}\cap \check\P^\frac{n}{2}=\P^m$. 
By convention $\Pn {-1}$ means the empty set.
In general we can take arbitrary 
linear cycles in the Fermat variety, see \eqref{6apr2017}.  
\begin{conj}
\label{CanISayFinally?}
 Let $n\geq 6$ be an even number, $m:=\frac{n}{2}-2$ and let $\P^\frac{n}{2}$ and $\check\P^\frac{n}{2}$ be two linear cycles 
with $\P^\frac{n}{2}\cap \check\P^\frac{n}{2}=\P^m$ 
inside  the  Fermat variety of degree  $d>\frac{2(n+1)}{n-2}$,  
and let $\plc$ be the intersection 
of a linear $\Pn {\frac{n}{2}+1}\subset\Pn {n+1}$ with $X^d_n$. There is a finite, nonempty set of pairs $(\cf,\check\cf)$ 
of coprime integers  with the following property: there exists a semi-irreducible algebraic cycle $Z$ of dimension 
$\frac{n}{2}$ in  $X^d_{n}$  such that 
\begin{enumerate}
\item
For some $a,b\in\Z,\ a\not=0$, 
the algebraic cycle $Z$ is homologous  to $a(\cf\P^\frac{n}{2}+\check\cf \check\P^\frac{n}{2})+b\plc$.
\item
The deformation space of   the pair $(X^d_n,Z)$, as an analytic variety,  
contains the intersection of deformation spaces of $(X^d_n, \P^\frac{n}{2})$ and 
$(X^d_n,\check\P^\frac{n}{2})$  as a proper subset.
\end{enumerate}
\end{conj}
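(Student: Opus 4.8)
The plan is in two halves — one computational, one conjectural — and it begins with the observation that item 1 is by itself automatic: $\cf\,\P^{\frac n2}+\check\cf\,\check\P^{\frac n2}+b\,\plc$ is an honest algebraic cycle with the prescribed homology class, so the true content of the conjecture is that a \emph{semi-irreducible} $Z$ homologous to such a combination can be found whose deformation space, inside the space of degree-$d$ hypersurfaces, \emph{strictly} contains the intersection $V_{\P^{\frac n2}}\cap V_{\check\P^{\frac n2}}$ of the deformation spaces of $(X^d_n,\P^{\frac n2})$ and $(X^d_n,\check\P^{\frac n2})$ — the deformation space of that honest combination being exactly this intersection, since adjoining $b\,\plc$ imposes nothing. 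So I would proceed by (a) identifying the Hodge locus $V_\delta$ of the class $\delta:=$ the primitive part of $\cf[\P^{\frac n2}]+\check\cf[\check\P^{\frac n2}]$, computing it at the Fermat point, and exhibiting coprime pairs $(\cf,\check\cf)$ for which $V_\delta\supsetneq V_{\P^{\frac n2}}\cap V_{\check\P^{\frac n2}}$; and (b) invoking the Hodge conjecture on the general member of $V_\delta$, plus a specialization argument, to manufacture $Z$ on the Fermat. Half (a) is precisely a rank computation on the ``certain matrix'' of the introduction, whose entries are periods of algebraic cycles; half (b) is the genuinely conjectural input.

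For half (a) I would pass to the Jacobian ring. Set $f_0:=x_0^d+\cdots+x_{n+1}^d$, $R:=\C[x_0,\dots,x_{n+1}]/\jacob(f_0)$, and $\sigma:=(\frac n2+1)d-(n+2)$. Griffiths' theory identifies $H^{\frac n2,\frac n2}_{\mathrm{prim}}(X^d_n)\cong R_\sigma$, $H^{\frac n2-1,\frac n2+1}_{\mathrm{prim}}(X^d_n)\cong R_{\sigma+d}$, the space of ambient first-order deformations of $X^d_n$ with $R_d$, and the first-order variation of a primitive Hodge class along $v\in R_d$ with the product $v\cdot P\in R_{\sigma+d}$, where $P\in R_\sigma$ is the polynomial whose socle pairing against $R_\sigma$ recovers the periods of the class over the Griffiths basis. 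Writing $P_\delta=\cf P_{\P}+\check\cf P_{\check\P}$ for the polynomial of $\delta$, the Zariski tangent space at the Fermat point of $V_\delta$ is $\ker(\,\cdot\,P_\delta)\subseteq R_d$; and by Theorem \ref{27.03.17} and the analysis around it, $V_{\P^{\frac n2}}$ and $V_{\check\P^{\frac n2}}$ are smooth there with tangent spaces $\ker(\,\cdot\,P_{\P})$ and $\ker(\,\cdot\,P_{\check\P})$. Since $V_{\P^{\frac n2}}\cap V_{\check\P^{\frac n2}}\subseteq V_\delta$ always, half (a) reduces to finding $v\in R_d$ with $vP_\delta=0$ but $vP_{\P}\neq0$ — equivalently, for $\Phi\colon R_d\to R_{\sigma+d}\oplus R_{\sigma+d}$, $v\mapsto(vP_{\P},vP_{\check\P})$, and $L_{\cf,\check\cf}:=\{(a,b):\cf a+\check\cf b=0\}$, to having $\im\Phi\cap L_{\cf,\check\cf}\neq0$, a condition on the ratio $\cf:\check\cf$ that can hold for only finitely many coprime pairs.

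To make this effective I would take $P_{\P}$ and $P_{\check\P}$ from the known closed formulas for periods of linear algebraic cycles in Fermat varieties (due to Movasati and Villaflor): there $\int_{\P^{\frac n2}}\omega_\beta$ over a Griffiths-basis form $\omega_\beta$, $\beta\in R_\sigma$, is an explicit product of Beta values that vanishes unless the exponents of $\beta$ obey a precise congruence system modulo $2d$, which pins down $P_{\P}$, and $P_{\check\P}$ follows by the substitution $\zeta_{2d}\mapsto\zeta_{2d}^3$ in the last $\frac n2-m$ coordinate blocks of \eqref{CarolinePilar}. Then one would (i) compute $\im\Phi$ and the set of coprime pairs $(\cf,\check\cf)$ with $\im\Phi\cap L_{\cf,\check\cf}\neq0$ by computer algebra, checking that it is nonempty — this is the arithmetic crux, the special ratios being those for which the congruence patterns attached to the two families of planes interfere — and (ii) compute the second-order, ideally a formal-neighbourhood, obstruction of $V_\delta$ at the Fermat point, to support that $V_\delta$ is smooth and reduced there and hence genuinely larger than $V_{\P^{\frac n2}}\cap V_{\check\P^{\frac n2}}$, not merely tangentially so. For half (b), fix such a pair and a very general $t$ in $V_\delta\setminus(V_{\P^{\frac n2}}\cap V_{\check\P^{\frac n2}})$: the Hodge conjecture for $X_t$ furnishes an algebraic cycle of class $\delta_t$, which, since $t\notin V_{\P^{\frac n2}}$, is not a combination of deformed linear cycles, and which one arranges (up to homology) to be semi-irreducible; by properness of the relevant Chow variety it spreads out over a finite cover of $V_\delta$ and specializes to a semi-irreducible cycle $Z$ on the Fermat whose class lies in the $\Q$-span of $[\P^{\frac n2}],[\check\P^{\frac n2}],[\plc]$, so that clearing denominators yields item 1, while the deformation space of $(X^d_n,Z)$ then contains $V_\delta$ and hence properly contains $V_{\P^{\frac n2}}\cap V_{\check\P^{\frac n2}}$, which is item 2.

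The main obstacle is twofold. The rank computation lives in graded pieces of the Jacobian ring of a degree-$d$ form in $n+2$ variables, whose dimensions grow quickly, and I do not expect a single argument uniform in $n$ and $d$: for each admissible pair ($n\geq6$, $d>\frac{2(n+1)}{n-2}$) it is a finite but sizeable computation, which is why the statement can only be conjectured and verified in examples, beginning with $n=6$ and small $d$. More seriously, half (b) invokes the Hodge conjecture for the very general hypersurface parametrized by $V_\delta$, which is not known; making it rigorous for these Fermat families would amount to new cases of the variational Hodge conjecture. The realistic target — matching the word ``evidences'' in the abstract — is therefore to carry out (i) and (ii) rigorously, together with the Jacobian-ring reduction of half (a), and to deduce the conjecture modulo this last input.
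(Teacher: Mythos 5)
You are addressing a conjecture: the paper does not prove it, but gives a conditional derivation in \S\ref{27.07.2017} assuming the Hodge conjecture together with Conjectures \ref{Duisburg2016}, \ref{TrainFreiburgBonn} and \ref{13jan2017}, so the comparison is against that strategy. Your half (a) matches the paper's preparation: the rank computation on $[\pn_{i+j}]$ (your map $\Phi$ in the Jacobian ring) is the content of Conjecture \ref{21m2017} and Table \ref{12mar2017}, and the higher-order reducedness check is Conjecture \ref{Duisburg2016}, investigated via the Taylor expansion of Theorem \ref{InLabelNadasht?}. One correction there: for $m=\frac n2-2$ the verified cases (Theorem \ref{01.j.2017}) show that $\rank[\pn_{i+j}(\cf\P^{\frac n2}+\check\cf\check\P^{\frac n2})]$ is \emph{independent} of the ratio $\cf:\check\cf$ away from $x=0$, so your tangent-space condition $\im\Phi\cap L_{\cf,\check\cf}\neq 0$ holds for essentially all ratios, not finitely many; what cuts the admissible set down to a finite set of coprime pairs is the smoothness and reducedness of the scheme $V_{\cf[\P^{\frac n2}]+\check\cf[\check\P^{\frac n2}]}$ (Theorem \ref{dream2017} excludes most pairs, Theorem \ref{WillCome2017} singles out $(1,-1)$), which is a higher-order phenomenon.

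The genuine gap is in your half (b). The Hodge conjecture on $V_{\delta_0}$ produces a family $Z_t=\sum_k n_k Z_{k,t}$ with a priori many irreducible components, and nothing in your argument controls the individual classes $[Z_{k,0}]$ or lets you ``arrange'' the cycle to be semi-irreducible: semi-irreducibility requires the generic member to be irreducible, and the paper notes at the end of \S\ref{17agu2017} that $\cf\P^{\frac n2}+\check\cf\check\P^{\frac n2}$ itself is \emph{not} semi-irreducible when $m\le\frac n2-2$, so this cannot be waved away. The paper closes this gap with two inputs you do not have. First, smoothness and reducedness of $V_{\delta_0}$ gives the scheme-theoretic inclusions $V_{\delta_0}\subset V_{[Z_{k,0}]}$, hence $\rank([\pn_{i+j}(Z_{k,0})])\le\rank([\pn_{i+j}(\delta_0)])$, and Conjecture \ref{TrainFreiburgBonn} --- a classification of the Hodge classes of the Fermat variety whose period matrix has such low rank --- forces each $[Z_{k,0}]$ to be, modulo $[\plc]$ and a multiple, a sum of at most two linear cycles. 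Second, Conjecture \ref{13jan2017} (no inclusions among the relevant Zariski tangent spaces) together with coprimality of $(\cf,\check\cf)$ forces that pair to be $\{\P^{\frac n2},\check\P^{\frac n2}\}$ with coefficients proportional to $(\cf,\check\cf)$; only then can one take $Z_t$ irreducible for generic $t$ and arrive at \eqref{ObraPrimaDaSara}. Finally, the integers $a,b,c$ (in particular $a\neq0$ and the normalization $c=-1$) are pinned down by intersecting with auxiliary linear cycles via \eqref{13jan16-2}; your ``clearing denominators'' does not deliver this. Without substitutes for Conjectures \ref{TrainFreiburgBonn} and \ref{13jan2017}, your half (b) does not produce a semi-irreducible $Z$ satisfying item 1.
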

An algebraic cycle $Z=\sum_{i=1}^r n_i Z_i,\ \ n_i\in\Z$ in a smooth projective variety $X$ is called semi-irreducible if the pair $(X,Z)$ can be
deformed into $(X_t,Z_t)$ with $Z_t$ irreducible,  for a precise definition see \S\ref{17agu2017}. 
Note that $Z,a,b$ in the above conjecture depend on $\cf$ and $\check\cf$. 
If $d$ is a prime number or $d=4$ or $d$ is relatively prime with $(n+1)!$ then 
the Hodge conjecture for the Fermat variety $X^d_n$  
can be proved using only linear cycles, see  \cite{Ran1980} and \cite{sh79}. Therefore, the existence of the algebraic cycle $Z$
in Conjecture \ref{CanISayFinally?} is not predicted by the Hodge conjecture for $X^d_n$. We have derived it assuming the Hodge
conjecture for all smooth hypersurfaces of degree $d$ and dimension $n$ and few other 
conjectures with some computational evidences 
(Conjectures \ref{Duisburg2016},  Conjecture \ref{TrainFreiburgBonn} and Conjecture \ref{13jan2017}). 
The number $a$ is equal to $1$ if the integral Hodge conjecture is true and the term $b\plc$ pops up because the relevant 
computations are done in primitive (co)homologies.
Since the algebraic cycle $Z$
is numerically equivalent to $a(\cf\P^\frac{n}{2}+\check\cf\check\P^\frac{n}{2})+b\plc$
this might be used to investigate its (non-)existence, at least for Fermat cubic tenfold. Our computations in this article suggest that
$(\cf,\check \cf)=(1,-1)$ satisfies the property in Conjecture \ref{CanISayFinally?}. 

Let $\C[x]_d=\C[x_0,x_1,\cdots,x_{n+1}]_d$ be the set of  homogeneous polynomials of degree $d$ in $n+2$ variables, and 
let $\T$ be the open subset of $\C[x]_d$ parameterizing smooth hypersurfaces $X$ of degree $d$ and $\T_{\underline{1}}\subset \T$ be its subset 
parameterizing those with a linear $\P^{\frac{n}{2}}$ inside $X$. 
We use the notation $X_t,\ t\in\T$ and denote by $0\in\T$ the point corresponding to the Fermat variety, and so, $X_0=X^d_n$. 
The algebraic variety $\T_{\underline 1}$ is irreducible, however, as an analytic variety in a neighborhood (usual topology) of 
$0\in \T$ it has many irreducible components corresponding to deformations of 
a linear cycle  inside $X^d_n$. Let us denote by $V_{\P^{\frac{n}{2}}}$ the local branch of 
$\T_{\underline 1}$ parameterizing
deformations of the pair $\P^{\frac{n}{2}}\subset X^d_n$.  In general, 
for a Hodge cycle in $H_n(X^d_n,\Z)$ we define the Hodge locus $V_{\delta_0}\subset (\T,0)$ which is an analytic 
scheme and its underlying  
analytic variety consists of points  $t\in(\T,0)$ such that the monodromy $\delta_t\in H_n(X_t,\Z)$ of $\delta_0$ along a path 
in $(\T,0)$ is still Hodge,
see \S\ref{15july2016}.  
For $[\P^{\frac{n}{2}}]\in H_n(X^d_n,\Z)$ we know that 
$V_{[\P^{\frac{n}{2}}]}$ as analytic scheme is smooth and reduced and moreover 
$V_{\P^\frac{n}{2}}= V_{[\P^\frac{n}{2}]}$,  see the discussion after Theorem \ref{main2}. 
This is  not true for an arbitrary Hodge
cycle. 
Conjecture \ref{CanISayFinally?} says that $V_{\P^{\frac{n}{2}}}\cap V_{\check\P^{\frac{n}{2}}}$ is a proper 
subset of the Hodge locus $V_{\cf [\P^{\frac{n}{2}}]+\check\cf[\check\P^{\frac{n}{2}}]}$, see Figure \ref{TwoP1-1}.  
In Conjecture \ref{CanISayFinally?} the case $m=\frac{n}{2}-1$ and 
$\cf=\check\cf=1$  is excluded,   as the pair $(X^d_n,\P^\frac{n}{2}+\check\P^\frac{n}{2})$ can be 
deformed into  a  hypersurface containing a complete intersection 
of type $\underbrace{1,1,\cdots,1}_{\frac{n}{2} \hbox{ times }},2$. 
For small $m$'s, the situation is not also strange.
\begin{theo}[\cite{Roberto}]
\label{27.03.17}
Let $(n,d,m)$ be one of the following triples
\begin{eqnarray*}
& &(2,d,-1),\ 5\leq d\leq 14, \\
& & (4, 4,-1), (4,5,-1), (4,6,-1), (4,5,0),  (4,6,0), \\
& & (6,3, -1), (6,4, -1), (6,4,0), \\ 
& & (8,3,-1),(8,3,0),\\
& & (10,3,-1), (10,3,0), (10,3,1), 
\end{eqnarray*}
and $\P^{\frac{n}{2}}$ and $\check\P^{\frac{n}{2}}$ be linear cycles in \eqref{CarolinePilar}. 
 The Hodge locus passing through the Fermat point $0\in\T$ and corresponding to deformations of 
 the Hodge cycle $\cf[\P^{\frac{n}{2}}]+ \check\cf[\check \P^{\frac{n}{2}}]\in H_n(X^d_n,\Z)$ with 
 $\P^\frac{n}{2}\cap \check\P^\frac{n}{2}=\P^m$ and $\cf,\check\cf\in\Z,\ \cf\not=0,\ \check\cf\not=0$
 is smooth and reduced. Moreover, its 
 underlying analytic variety is simply the intersection $V_{\P^{\frac{n}{2}}}\cap V_{\check \P^{\frac{n}{2}}}$. 
\end{theo}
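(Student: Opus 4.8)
The plan is to compare, at the Fermat point $0\in\T$, the Zariski tangent space of the Hodge locus $V_{\cf[\P^{\frac{n}{2}}]+\check\cf[\check\P^{\frac{n}{2}}]}$ with the dimension of $V_{\P^{\frac{n}{2}}}\cap V_{\check\P^{\frac{n}{2}}}$, and to show they coincide. This forces the analytic local ring of the Hodge locus at $0$ to be regular, hence smooth and reduced, and its underlying variety to coincide with $V_{\P^{\frac{n}{2}}}\cap V_{\check\P^{\frac{n}{2}}}$.

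First I would recall, from \S\ref{15july2016}, the description of $T_0V_{\delta_0}$ for a Hodge cycle $\delta_0$ whose Poincar\'e dual lies in $H^{\frac{n}{2},\frac{n}{2}}(X^d_n)$. Differentiating the period relations $\int_{\delta_0}\omega=0$, $\omega\in F^{\frac{n}{2}+1}$, along a deformation and using Griffiths transversality identifies $T_0V_{\delta_0}$ with the left kernel of a bilinear pairing
\[
B_{\delta_0}\colon\ T_0\T\ \times\ H^{\frac{n}{2}+1,\frac{n}{2}-1}_{\rm prim}(X^d_n)\ \longrightarrow\ \C,\qquad (v,\omega)\ \longmapsto\ \big\langle\,\overline{\nabla_v\omega}\,,\,\delta_0\,\big\rangle,
\]
where $\overline{\nabla_v\omega}$ is the class of $\nabla_v\omega$ in $H^{\frac{n}{2},\frac{n}{2}}$ and $\langle\cdot,\cdot\rangle$ is the intersection pairing. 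For the Fermat variety this is entirely algebraic: the two Hodge pieces become graded components of the Jacobian ring of $x_0^d+\cdots+x_{n+1}^d$, $T_0\T$ maps onto its degree-$d$ part, the pairing $B_{\delta_0}$ becomes polynomial multiplication followed by the linear functional on the middle graded piece attached to $\delta_0$, and for $\delta_0=[\P^{\frac{n}{2}}]$ this functional is the explicit one coming from the period of a linear cycle. Since $\delta_0\mapsto B_{\delta_0}$ is linear,
\[
B_{\cf[\P^{\frac{n}{2}}]+\check\cf[\check\P^{\frac{n}{2}}]}\ =\ \cf\,B_{[\P^{\frac{n}{2}}]}\ +\ \check\cf\,B_{[\check\P^{\frac{n}{2}}]},
\]
a two-parameter linear system of bilinear forms.

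The core step, and the only place a computer is needed, is a finite computation carried out for each listed triple $(n,d,m)$: using the explicit periods of $\P^{\frac{n}{2}}$ and $\check\P^{\frac{n}{2}}$ in $X^d_n$ one writes the matrices of $B_{[\P^{\frac{n}{2}}]}$ and $B_{[\check\P^{\frac{n}{2}}]}$ in monomial bases of the relevant graded pieces of the Jacobian ring and verifies that
\[
\ker\!\big(\cf\,B_{[\P^{\frac{n}{2}}]}+\check\cf\,B_{[\check\P^{\frac{n}{2}}]}\big)\ =\ \ker B_{[\P^{\frac{n}{2}}]}\ \cap\ \ker B_{[\check\P^{\frac{n}{2}}]}\qquad\mbox{for all }\cf,\check\cf\in\Z,\ \cf\check\cf\neq0,
\]
and moreover that these two smooth loci meet with the expected codimension, i.e. $\dim(\ker B_{[\P^{\frac{n}{2}}]}\cap\ker B_{[\check\P^{\frac{n}{2}}]})=\dim\ker B_{[\P^{\frac{n}{2}}]}+\dim\ker B_{[\check\P^{\frac{n}{2}}]}-\dim T_0\T$. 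Concretely the first statement amounts to checking that the generic rank of the pencil $\{\cf\,B_{[\P^{\frac{n}{2}}]}+\check\cf\,B_{[\check\P^{\frac{n}{2}}]}\}$ is already attained at, say, $(\cf,\check\cf)=(1,-1)$, and that a suitable maximal minor, viewed as a polynomial in $\cf,\check\cf$, is a nonzero constant times a monomial in $\cf$ and $\check\cf$, so that the rank drops only on the coordinate axes. This is exactly the step that fails in general: a priori, mixing the two period functionals could produce extra cancellation for some special ratio $\cf:\check\cf$ and hence a kernel strictly larger than $T_0V_{\P^{\frac{n}{2}}}\cap T_0V_{\check\P^{\frac{n}{2}}}$ — indeed the neighbouring case $m=\frac{n}{2}-2$ of Conjecture \ref{CanISayFinally?} predicts precisely such a phenomenon. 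Ruling it out is the substance of the theorem, and the size of the sparse rational linear algebra involved (matrices indexed by monomials, with $d$ up to $14$ when $n=2$) is the only real difficulty.

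Granting this computation, the statement follows formally. The loci $V_{\P^{\frac{n}{2}}}=V_{[\P^{\frac{n}{2}}]}$ and $V_{\check\P^{\frac{n}{2}}}=V_{[\check\P^{\frac{n}{2}}]}$ are smooth and reduced (discussion after Theorem \ref{main2}), so their tangent spaces at $0$ are exactly $\ker B_{[\P^{\frac{n}{2}}]}$ and $\ker B_{[\check\P^{\frac{n}{2}}]}$; hence the tangent space of the scheme-theoretic intersection $V_{\P^{\frac{n}{2}}}\cap V_{\check\P^{\frac{n}{2}}}$ equals their intersection, the codimension check shows this intersection is smooth of dimension $\dim(\ker B_{[\P^{\frac{n}{2}}]}\cap\ker B_{[\check\P^{\frac{n}{2}}]})$, and by the pencil identity this number also equals $\dim T_0V_{\cf[\P^{\frac{n}{2}}]+\check\cf[\check\P^{\frac{n}{2}}]}$. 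On the other hand, along any local branch of $\T$ on which both linear cycles survive as algebraic cycles the class $\cf[\P^{\frac{n}{2}}]+\check\cf[\check\P^{\frac{n}{2}}]$ stays of type $(\frac{n}{2},\frac{n}{2})$, so $V_{\P^{\frac{n}{2}}}\cap V_{\check\P^{\frac{n}{2}}}$ is a closed analytic subvariety of $\big(V_{\cf[\P^{\frac{n}{2}}]+\check\cf[\check\P^{\frac{n}{2}}]}\big)_{\rm red}$. Therefore
\[
\dim\big(V_{\P^{\frac{n}{2}}}\cap V_{\check\P^{\frac{n}{2}}}\big)\ \le\ \dim V_{\cf[\P^{\frac{n}{2}}]+\check\cf[\check\P^{\frac{n}{2}}]}\ \le\ \dim T_0V_{\cf[\P^{\frac{n}{2}}]+\check\cf[\check\P^{\frac{n}{2}}]}\ =\ \dim\big(V_{\P^{\frac{n}{2}}}\cap V_{\check\P^{\frac{n}{2}}}\big),
\]
so all inequalities are equalities. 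Consequently the embedding dimension of $V_{\cf[\P^{\frac{n}{2}}]+\check\cf[\check\P^{\frac{n}{2}}]}$ at $0$ equals its Krull dimension, its local ring is regular — hence reduced and smooth — and, being smooth, the Hodge locus is locally irreducible at $0$, so it coincides with the closed subvariety $V_{\P^{\frac{n}{2}}}\cap V_{\check\P^{\frac{n}{2}}}$ of equal dimension.
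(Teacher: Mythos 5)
Your proposal is correct, and its skeleton is the paper's: identify the Zariski tangent space of $V_{\cf[\P^{\frac{n}{2}}]+\check\cf[\check\P^{\frac{n}{2}}]}$ with the kernel of the period/IVHS matrix, which is linear in the cycle (Theorem \ref{main1.5}), verify by a finite computation that its corank is independent of $(\cf,\check\cf)$ for $\cf\check\cf\neq 0$ and equals $\codim(V_{\P^{\frac{n}{2}}}\cap V_{\check\P^{\frac{n}{2}}})$, and then squeeze. Two of your auxiliary computations are set up differently from the paper's, and it is worth recording why they still suffice. First, to evaluate $\codim(V_{\P^{\frac{n}{2}}}\cap V_{\check\P^{\frac{n}{2}}})$ the paper uses the Koszul-complex formula of Theorem \ref{chilenos2017} (the number $\intdim^d_n(m)$), whereas you certify transversality of $\ker B_{[\P^{\frac{n}{2}}]}$ and $\ker B_{[\check\P^{\frac{n}{2}}]}$ and invoke smoothness of transverse intersections. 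This works for every triple in the list because in each case $\codnum_{1^{n-m+1},(d-1)^{m+1}}=0$, so $\intdim^d_n(m)=2\,\codim V_{\P^{\frac{n}{2}}}$ and the intersection really is transverse; but it is a fortunate feature of these cases -- already for $(n,d,m)=(10,3,2)$ one has $\intdim=39<40$, the intersection is not transverse, and your method would give no handle on $\dim(V_{\P^{\frac{n}{2}}}\cap V_{\check\P^{\frac{n}{2}}})$, while the paper's formula still applies. Second, for the rank constancy the paper (Theorem \ref{01.j.2017}) takes any maximal minor $P(x)\not\equiv 0$ of the pencil and checks the rank directly at each rational root of $P$; your certificate -- a maximal minor whose determinant is a nonzero monomial in $\cf,\check\cf$ -- is sufficient but strictly stronger than needed (it is unavailable whenever every nonvanishing maximal minor has an irrational root), so you should be prepared to fall back on the root-checking variant. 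With these caveats the final squeeze argument is exactly the paper's.
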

The cases $(n,d)=(2,4),(4,3)$ are the only cases such that the $(\frac{n}{2}+1,\frac{n}{2}-1)$ 
Hodge number of $X^d_n$ is equal to one, and these are out of our discussion as all Hodge loci $V_{\delta_0}$ 
are of codimension one, smooth and reduced. For the discussion of these cases and a baby version
of Conjecture \ref{CanISayFinally?} see \S\ref{17agu2017}.  
We conjecture that for a fixed $n\geq 6$ and $d>\frac{2(n+1)}{n-2}$, 
there is $0\leq M_{n,d}< \frac{n}{2}-2$  
depending only on $n$ and $d$ such that for $m\leq M_{n,d}$, respectively $M_{n,d}<m<\frac{n}{2}-1$, 
we have similar  statements as in Theorem \ref{27.03.17}, respectively  Conjecture \ref{CanISayFinally?}.
We do not have any idea how to describe $M_{n,d}$ in general.  
We expect that  Theorem \ref{27.03.17} for $m=-1$ is always true. In this case $\P^\frac{n}{2}$
and $\check\P^\frac{n}{2}$ do not intersect each other.  
The restriction on $n$ and $d$ in  Theorem 
\ref{27.03.17} is due to the fact that our proof is computer assisted, and upon a better computer 
programing  and a better device, it might be improved. 
For now, the author does not see any theoretical proof. 
\begin{figure}
\begin{center}
\includegraphics[width=0.6\textwidth]{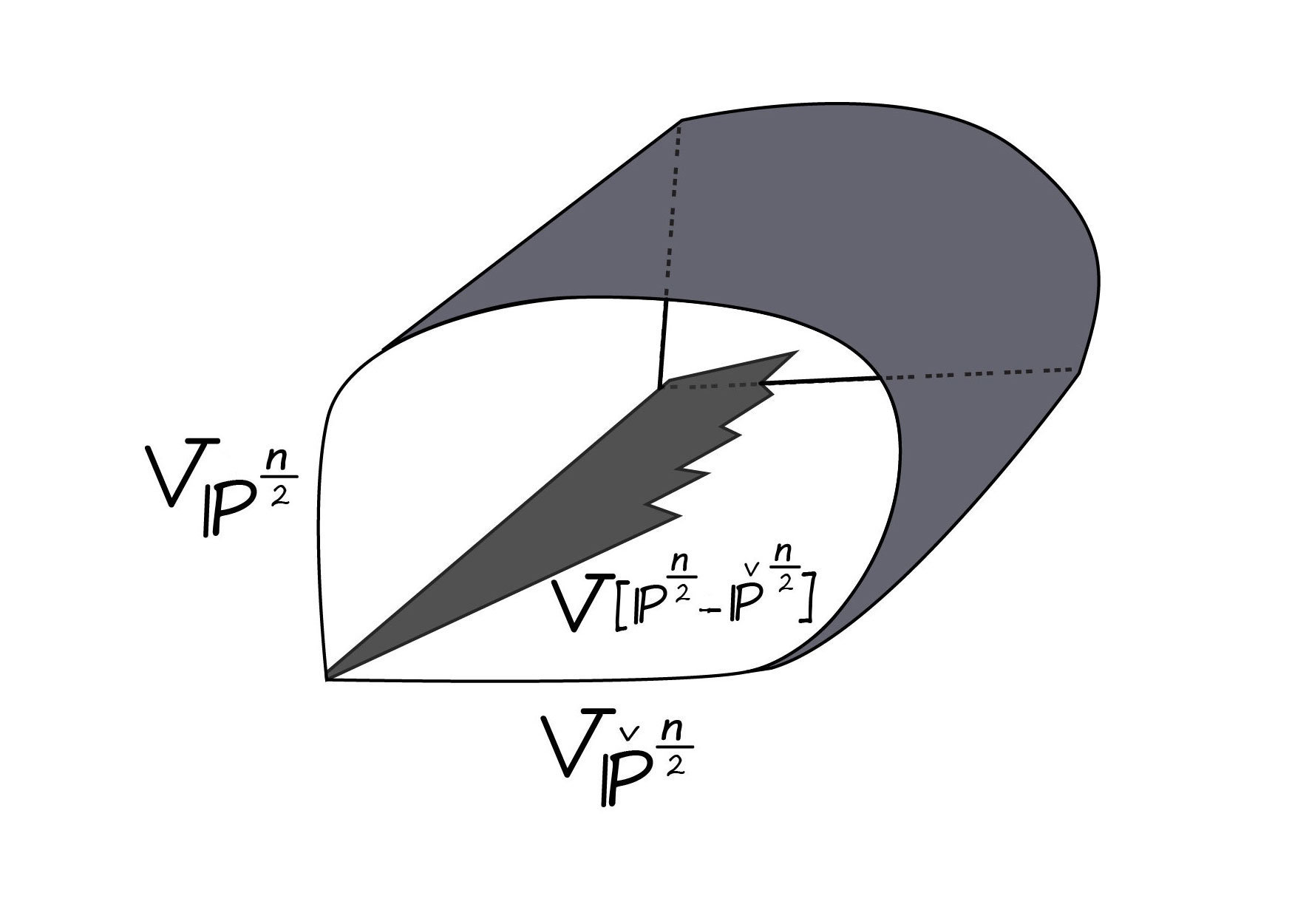}
\caption{Hodge locus of sum of linear cycles}
\label{TwoP1-1}
\end{center}
\end{figure}
The first evidence for Conjecture \ref{CanISayFinally?} is the fact that for many examples of $n$ and $d$, the codimension
of the Zariski tangent space of the analytic scheme $V_{\cf[\P^{\frac{n}{2}}]+\check\cf[\check\P^{\frac{n}{2}}] }$ is strictly smaller than the codimension of 
$V_{\P^{\frac{n}{2}}}\cap  V_{\check\P^{\frac{n}{2}}}$ which is smooth. In order to be able to investigate the smoothness and reducedness of
this analytic scheme, we have worked out Theorem \ref{InLabelNadasht?} which is just computing  a Taylor series. Its importance
must not be underestimated. The linear part of such Taylor series encode the whole data of infinitesimal variation of Hodge structures (IVHS)
introduced by Griffiths and his coauthors in 1980's, and from this one can derive most of  the applications of IVHS, such as global 
Torelli problem, see \cite{CarlsonGriffiths1980}. In particular, the proof of Theorem \ref{27.03.17} uses just such linear parts.   
In a personal communication C. Voisin pointed out the difficulties   
on higher order approximation of the Noether-Lefschetz locus. This motivated the author to elaborate some of his old ideas 
in \cite{ho06-1} and develop it into Theorem \ref{InLabelNadasht?}. The second order approximations in 
cohomological terms 
(similar to IVHS), has been formulated in \cite{mclean2005}, however it is not enough 
for the investigation of Conjecture \ref{CanISayFinally?}, see Theorem \ref{WillCome2017}, 
and it turns out one has to deal with third and 
fourth order approximations, see Theorem \ref{dream2017}.  
We use Theorem \ref{InLabelNadasht?} to check reducedness and smoothness of components of the Hodge loci.  
We break the property of being reduced and smooth  into $N$-smooth for all $N\in\N$, see \S\ref{aima2017}, 
and prove the following theorem which is not covered in Theorem \ref{27.03.17}. 
\begin{theo}
\label{WillCome2017} 
Let $(n,d,m)$ be one of the triples 
\begin{eqnarray}
& &
\label{16.06.2017-1}
(6,3,1), (6,3,0),(8,3,1)\\ \label{16.06.2017-2}
& & (4,4,0), (8,3,2),(8,3,1),(10,3,3), (10,3,2),
\end{eqnarray}
and $\P^{\frac{n}{2}}$ and $\check\P^{\frac{n}{2}}$ be linear cycles in \eqref{CarolinePilar}.
For all $\cf,\check\cf\in\Z$ with $1\leq |\cf|\leq |\check\cf|\leq 10$ 
the analytic scheme  $V_{\cf [\P^{\frac{n}{2}}]+
\check\cf[\check\P^{\frac{n}{2}}]}$ with 
$\P^{\frac{n}{2}}\cap \check \P^{\frac{n}{2}}=\P^m$ 
is $2$-smooth.  It is  $3$-smooth in the cases \eqref{16.06.2017-1} and  
for $(n,d,m,\cf,\check\cf)=(4,4,0,1,-1)$. It is $4$-smooth in the case $(n,d,m,\cf,\check\cf)=(6,3,1,1,-1)$ and $(n,d,m)=(6,3,0)$. 
\end{theo}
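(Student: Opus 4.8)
\medskip
\noindent\emph{Proof proposal.} The plan is to reduce the assertion to a finite exact computation fed by the Taylor series of Theorem \ref{InLabelNadasht?}. By definition (see \S\ref{aima2017}), to say that the analytic scheme $V_{\cf[\P^{\frac n2}]+\check\cf[\check\P^{\frac n2}]}$ is $N$-reduced at the Fermat point $0\in\T$ is a statement about the ideal generated by the order-$\leq N$ truncation of the analytic equations cutting out this Hodge locus: it asserts that the truncated ideal defines the same $N$-th order infinitesimal neighbourhood of $0$ as the smooth variety $V_{\P^{\frac n2}}\cap V_{\check\P^{\frac n2}}$. In particular, $N$-reducedness for all $N$, together with smoothness of the reduced scheme, is precisely the conclusion of Theorem \ref{27.03.17}; for the triples \eqref{16.06.2017-1}--\eqref{16.06.2017-2}, where that conclusion either fails or is out of computational reach, the task is to push the verification through order $N=2$, and through $N=3$ in the indicated subcases.

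First I would fix, for each triple $(n,d,m)$, an explicit monomial basis of $H^n_{{\rm prim}}(X^d_n)$ through the Griffiths--Dwork residue description $\C[x]/\jacob(x_0^d+\cdots+x_{n+1}^d)$, and evaluate the Fermat-point periods $\int_{\P^{\frac n2}}\omega_\beta$ and $\int_{\check\P^{\frac n2}}\omega_\beta$ of the associated differential forms over the two linear cycles of \eqref{CarolinePilar}; these lie in $\Q(\zeta_{2d})$ and are given in closed form. The equations of the Hodge locus $V_{\delta_0}$, where $\delta_0$ is the primitive part of $\cf[\P^{\frac n2}]+\check\cf[\check\P^{\frac n2}]$, are the conditions $\int_{\delta_t}\omega=0$ for $\omega$ ranging over a frame of the Hodge bundle $F^{\frac n2+1}$ over $(\T,0)$. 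Substituting the power series of Theorem \ref{InLabelNadasht?} (and, for the third-order contributions, the explicit higher-order formulas of Theorem \ref{dream2017}) expresses each such integral as a power series in the coordinates on $\T$ whose coefficients are explicit $\Q(\zeta_{2d})$-combinations of the above Fermat periods and residue pairings. The linear parts recover the IVHS, and, exactly as in the proof of Theorem \ref{27.03.17}, one first checks that for the listed $(n,d,m)$ the Zariski tangent space of $V_{\delta_0}$ is strictly larger than that of $V_{\P^{\frac n2}}\cap V_{\check\P^{\frac n2}}$, so that the scheme is genuinely non-reduced or singular and only finite-order reducedness is in question.

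Next I would carry out, within a computer algebra system, the truncation of these power series at order $N$ (first $N=2$, then $N=3$ in the cases where it is claimed), and reduce the order-$2$ (resp. order-$3$) coefficients modulo the linear forms that already define $V_{\P^{\frac n2}}\cap V_{\check\P^{\frac n2}}$. The assertion \emph{$2$-reduced} (resp. \emph{$3$-reduced}) then becomes: after choosing coordinates on the smooth variety $V_{\P^{\frac n2}}\cap V_{\check\P^{\frac n2}}$, the resulting higher-order obstruction polynomials vanish identically on it. This is a finite list of polynomial identities over $\Q(\zeta_{2d})$, to be verified by exact arithmetic and recorded so as to be reproducible. Since the relevant family is finite --- the triples of \eqref{16.06.2017-1}--\eqref{16.06.2017-2} together with all $(\cf,\check\cf)\in\Z^2$ satisfying $1\leq|\cf|\leq|\check\cf|\leq 10$ --- the procedure terminates.

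The main obstacle is size rather than concept. The rank of $H^n_{{\rm prim}}(X^d_n)$, the number of monomials entering the order-$N$ truncation, and the codimension of the Hodge locus all grow quickly with $n$ and $d$, so the linear algebra over $\Q(\zeta_{2d})$ becomes intractable beyond the listed cases --- which is why \eqref{16.06.2017-2} stops where it does and why $3$-reducedness is claimed only for the smallest triples. A subtler difficulty is bookkeeping: the coefficients delivered by Theorem \ref{InLabelNadasht?} must be taken in the correct primitive cohomology and the contribution of $\plc$ handled consistently at every order, since a single normalization error there would corrupt the whole expansion. Once the data is organized correctly each individual identity is routine; the content lies in assembling and checking all of them exactly.
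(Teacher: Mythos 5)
Your top-level strategy --- implement the Taylor series of Theorem \ref{InLabelNadasht?}, truncate at order $N$, and reduce the verification to a finite exact computation over $\Q(\zeta_{2d})$ --- is exactly the route the paper takes (its proof is a computer implementation of \eqref{15.12.16}, validated by re-checking $N$-reducedness in cases already settled by Theorem \ref{27.03.17}). But the condition you propose to verify is not $N$-reducedness, and the discrepancy is fatal. By the definition in \S\ref{aima2017}, $N$-reducedness asks that each defining integral $f=f_i$, $i>k$, lie in the ideal $\langle f_1,\dots,f_k\rangle$ modulo terms of degree $\geq N+1$, where $f_1,\dots,f_k$ are chosen so that their \emph{linear parts} span the linear parts of all the defining integrals of $V_{\delta_0}$; at order $2$ this amounts to checking that the residual quadratic forms vanish on $\ker[\pn_{i+j}(\delta_0)]$, the Zariski tangent space of $V_{\delta_0}$ itself, and at order $3$ one must carry out the division of \eqref{15june2017} degree by degree. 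You instead reduce the order-$2$ and order-$3$ coefficients ``modulo the linear forms that already define $V_{\P^{n/2}}\cap V_{\check\P^{n/2}}$'' and test vanishing on that variety. For every triple in the theorem one has $\mydim^d_n(m)<\intdim^d_n(m)$, so $T_0(V_{\P^{n/2}}\cap V_{\check\P^{n/2}})$ is a \emph{proper} subspace of $\ker[\pn_{i+j}(\delta_0)]$; vanishing on the smaller space is strictly weaker than the required ideal membership, so your check could succeed while the scheme fails to be $2$-reduced.

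The same confusion shows up in your framing of what is being proved. The strict inequality of tangent spaces does \emph{not} imply that $V_{\delta_0}$ is ``genuinely non-reduced or singular'': the whole point of the paper (Conjectures \ref{Duisburg2016} and \ref{CanISayFinally?}) is that for suitable $(\cf,\check\cf)$ this scheme is conjecturally smooth and reduced yet \emph{strictly larger} than $V_{\P^{n/2}}\cap V_{\check\P^{n/2}}$, and Theorem \ref{WillCome2017} is evidence \emph{for} that, not a fallback after non-reducedness has been established. (Non-reducedness or singularity is what Theorem \ref{dream2017} proves in different cases; it supplies no ``higher-order formulas'' --- all orders already come from Theorem \ref{InLabelNadasht?}.) If you restate the target as ideal membership relative to $\langle f_1,\dots,f_k\rangle$, i.e.\ relative to $\ker[\pn_{i+j}(\delta_0)]$, and run the division scheme \eqref{15june2017} through order $N$, your computational plan becomes the paper's proof.
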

Note that the triples  in Theorem \ref{WillCome2017} are not covered in Theorem \ref{27.03.17} and we do not know 
the corresponding Hodge locus. 
In order to solve Conjecture \ref{CanISayFinally?} we will need to identify 
non-reduced Hodge loci. We prove that:
\begin{theo}
\label{dream2017}
Let  $\P^{\frac{n}{2}}$ and $\check\P^{\frac{n}{2}}$ be linear cycles in \eqref{CarolinePilar} 
with $\P^{\frac{n}{2}}\cap \check\P^{\frac{n}{2}}=\P^m$. 
The analytic scheme $V_{\cf[\P^{\frac{n}{2}}]+\check\cf[\check\P^{\frac{n}{2}}]}$ 
is either singular at the Fermat point 
$0$ or it is non-reduced, in the following cases:
\begin{enumerate}
\item
For all $\cf,\check\cf\in\Z,\ 1\leq |\cf|\leq |\check\cf|\leq 10,\ \cf\not=\check\cf$, $m=\frac{n}{2}-1$ and $(n,d)$ in the list
\begin{eqnarray}\label{5.12.2017}
& &(2,d),\ 5\leq d\leq 9, \\
& & (4, 4), (4,5), (6,3),(8,3) 
\end{eqnarray}
\item
For  all $\cf,\check\cf\in\Z,\ 1\leq |\cf|\leq|\check\cf|\leq 10,\ \cf\not=-\check\cf$ and $(n,d,m)$ in the list
$$
(4,4,0), (6,3,1), (8,3,2)
$$
\end{enumerate}
\end{theo}
The upper bounds for $|\cf|$ and $|\check\cf|$ is due to our computational methods, and it would not be difficult to
remove this hypothesis. The verification of the case $(n,d,m)=(8,3,2)$ in the second item  by a computer takes more than 14 days! 
Theorem \ref{dream2017} in the case  $(n,d,m)=(2,5,0)$ and without the upper bound on $|\cf|,|\check\cf|$  
follows from a theorem of Voisin in \cite{voisin89},  see   Exercise 2, 
page 154 \cite{vo03} and its reproduction in \cite{ho13} Exercise 16.9.  Based on Theorem \ref{WillCome2017} and Theorem
\ref{dream2017} we may conjecture that for $(n,d,m,\cf,\check\cf)=(4,4,0,1,-1), (6,3,1,1,-1)$, the analytic scheme 
$V_{\cf[\P^{\frac{n}{2}}]+\check\cf[\check\P^{\frac{n}{2}}]}$ is smooth
and reduced. If this is the case, its underlying analytic variety is bigger than 
$V_{[\P^{\frac{n}{2}}]}\cap V_{[\check\P^{\frac{n}{2}}]}$ 
(see \S\ref{eid1396}),  and so,
we may try to formulate similar statements as in Conjecture \ref{CanISayFinally?} in these cases. However, one of the main
ingredients of  Conjecture \ref{CanISayFinally?} fails to be true in lower degrees, see Conjecture \ref{Duisburg2016} and comments after this.

The present article together with the book \cite{ho13}
is written during the years 2014-2017. 
One of the main aims of the book \cite{ho13} has been to focus on computational aspects of Hodge theory. 
From this book we have just collected few results relevant to the content of this article, and in particular 
the study of the components of  the Hodge locus passing through the Fermat point.  
The proof of 
Theorem \ref{main1.5}, Theorem \ref{main2}, Theorem \ref{14out2016}, Theorem \ref{chilenos2017} and  
Theorem \ref{InLabelNadasht?}  are theoretical, whereas the proof of 
  Theorem \ref{27.03.17}, Theorem \ref{WillCome2017}, Theorem \ref{dream2017}, 
  Theorem \ref{28jan2016cmsa},   Theorem \ref{isolatedlikeAC}, Theorem \ref{3feb2016} are computer assisted.  These are partial verifications of many  conjectures, for which we have to work with  
particular examples of $d$ and $n$. In many cases we have just mentioned these as comments after each
conjecture and have avoided producing more theorem-style statements. 
An undergraduate student in mathematics interested in challenging problems is invited to read conjectures in 
\S\ref{25.03.2017}. 
We have to confess that  we have not done our best to verify such conjectures as much as the computer performs the computations, 
and have contented  ourselves to few special cases. 
There are few other results in the book \cite{ho13}  
which are not announced here, and they might be useful for the investigation of
Conjecture \ref{CanISayFinally?}.

The computer codes used in the present text are written as procedures in the library {\tt foliation.lib} (version 2.20) of 
{\sc Singular}, see \cite{GPS01}. The reader who wants to get used to  them  
is referred to \cite{ho13} Chapter 18. This is mainly for codes used utill 
\S\ref{eid1396}. From this section on, the name of procedures 
appears in the foot note of  the pages where they are used.  
A different computer implementation of  the proofs would be essential 
for two main reasons: first, it will be another confirmation of the results of the present paper, second, it will
produce more results that the author was not able to obtain by his own primitive codes. This may produce  precise 
conjectures for arbitrary dimension $n$ and degree $d$. 

The organization of the text is as follows. 
Sections \ref{07.03.2017-1},\ref{07.03.2017-2},\ref{15july2016},\ref{07.03.2017-3}  are essentially the
first version of the article which appeared in the Arxiv in 2015. These are  
the announcement of some of the author's results in the book  \cite{ho13}.
In \S\ref{07.03.2017-1} we reformulate the Hodge conjecture using
integrals. In \S\ref{07.03.2017-2} we introduce an alternative Hodge conjecture. This compares the deformation space of both algebraic
and Hodge cycles. In \S\ref{15july2016} we recall the missing ingredient in the formulation of infinitesimal variation of Hodge structures.
This is namely periods of Hodge/algebraic cycles. We then relate it to the alternative Hodge conjecture. In \S\ref{07.03.2017-3} we focus on 
Hodge cycles in the Fermat variety which cannot be deformed to nearby hypersurfaces. We then present  the formula of periods of linear
cycles inside the Fermat variety. 
From \S\ref{eid1396} we start to examine Conjecture
\ref{CanISayFinally?}.  In this section
we also prove Theorem \ref{27.03.17}. 
We first observe that the Zariski tangent space of the Hodge locus 
corresponding to the Hodge cycle $[\P^\frac{n}{2}+\check\P^{\frac{n}{2}}]$ has codimension strictly less  than 
the codimension of the locus corresponding to deformations of the algebraic cycle $\P^\frac{n}{2}+\check\P^{\frac{n}{2}}$. 
This indicates the existence of a strange component of the Hodge locus provided that such a component is smooth and reduced.
For this reason in \S\ref{tanhai2017} we introduce Conjecture \ref{Duisburg2016} which ensures us that such components exists for
certain linear combination of $\P^{\frac{n}{2}}$ and $\check\P^\frac{n}{2}$.
In order to investigate this conjecture,   in \S\ref{aima2017} we announce our main result on the full power series expansion of 
periods. This might be used
in order to investigate the smoothness and reducedness of the components of the Hodge loci. In this section we also prove Theorem 
\ref{WillCome2017} and Theorem \ref{dream2017}.
In \S\ref{25.03.2017} we introduce 
few other conjectures purely of linear algebraic nature. These are the last missing pieces in the proof of 
Conjecture \ref{CanISayFinally?}.  
Finally, in \S\ref{27.07.2017} we explain how to handle
Conjecture \ref{CanISayFinally?}.

My heartfelt thanks go  to P. Deligne for all his emails in January and February 2016  which
motivated me and gave me more courage and inspiration  to work on my book 
\cite{ho13} and the present article. 
This was in a time I was getting many disappointments and complains.
I would like to thank C. Voisin for her comments on higher order approximation of Noether-Lefschetz  locus.
This research has not been possible without the excellent ambient of 
my home institute IMPA in Rio de Janeiro and  the hospitality of  MPIM at Bonn during many short visits.
My sincere thanks go to both institutes. The last version of the article was written during a visit of Paris VII. I would
like to thank H. Mourtada and F. El Zein for the invitation and CNRS for financial support. Finally, I would like to dedicate this article to two women, one in my memories 
and the other by my side: Rogayeh Mollayipour, my mother, who thought me lessons
of life no other could do it, Sara Ochoa, my wife, whose contribution to the existence of
this article is not less than mine. 
\section{Hodge conjecture}
\label{07.03.2017-1}
For a complex smooth projective variety $X$, an even number $n$, an element $\omega$ of the algebraic de Rham cohomology $\omega\in H^n_\dR(X)$ 
and an irreducible subvariety  $Z$ of dimension $\frac{n}{2}$ in $X$, by a period of $Z$ we simply mean 
\begin{equation}
\label{26nov2015}
\frac{1}{(2\pi \sqrt{-1})^{\frac{n}{2}}}\mathlarger{\mathlarger{\int}}_{[Z]}\omega,
\end{equation}
where $[Z]\in H_n(X,\Z)$ is the topological class
induced by $Z$. All the homologies with integer coefficients are modulo torsions, and hence they are free $\Z$-modules.  We have to use a canonical isomorphism between the algebraic de Rham cohomology and the usual one defined by $C^\infty$-forms in order to say that
the integration makes sense, see Grothendieck's article \cite{gro66}. However, this does not give any clue how to compute such an integral.
In general, integrals are transcendental numbers, however, in our particular case 
if $X,Z,\omega$ are defined over a subfield $\k$ of complex numbers then \eqref{26nov2015} is also in $\k$, see Proposition 1.5 in 
Deligne's lecture notes in \cite{dmos}, and so  it must be computable.  
  In the $C^\infty$ context many of integrals \eqref{26nov2015} are automatically zero. This is the main content of 
the celebrated Hodge conjecture:
\begin{conj}[Hodge Conjecture]
\label{HC}
Let $X$ be a smooth projective variety of even dimension $n$ and $\delta\in H_n(X,\Z)$ be a Hodge cycle, that is, 
$$
\int_{\delta}\omega=0,\ \  \hbox{ for all closed $(p,q)$-form in $X$ with  }  p>\frac{n}{2},\ p+q=n.
$$
Then there is an algebraic cycle 
$$
\sum_{i=1}^s n_iZ_i,\ \ n_i\in\Z,\ \ \dim(Z_i)=\frac{n}{2}
$$ 
and a natural number $a\in\N$ such that $a\cdot \delta=\sum n_i[Z_i]$. 
\end{conj}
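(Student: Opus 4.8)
\medskip
\noindent\textbf{A proof proposal.}
The Hodge Conjecture is one of the outstanding open problems of the subject, so what follows is not a proposal for a complete proof but a sketch of the strategy that organises this paper: to turn the statement into something one can \emph{test}, cycle by cycle, by computing periods. The first, purely formal, step is to rewrite the vanishing condition defining a Hodge cycle $\delta$ in algebraic de Rham terms, so that being Hodge becomes the vanishing of the periods \eqref{26nov2015} for $\omega$ ranging over the Hodge pieces $F^pH^n_\dR(X)$ with $p>\frac{n}{2}$. Grothendieck's comparison theorem \cite{gro66} guarantees that these integrals make sense, and Deligne's Proposition~1.5 in \cite{dmos} guarantees that, when $X$ and $\delta$ are defined over a number field $\k$, the periods \eqref{26nov2015} already lie in $\k$. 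Hence, given any candidate algebraic cycle $\sum n_iZ_i$, the assertion $a\delta=\sum n_i[Z_i]$ reduces to checking equality of the two period vectors indexed by a basis of $H^n_\dR(X)$ -- a finite linear-algebra verification over $\k$. This settles the easy half: certifying that a \emph{guessed} cycle works.

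For the hard half -- producing the cycle -- I would not attack the conjecture head on; instead I would restrict, as the paper does, to Hodge cycles lying near one that is already understood and prove them algebraic by \emph{deformation}. Concretely, on the Fermat variety $X^d_n$ I would take $\delta_0$ to be a $\Z$-combination of the linear cycles $\P^{\frac{n}{2}},\check\P^{\frac{n}{2}}$ of \eqref{CarolinePilar}, whose periods are explicitly known, form the Hodge locus $V_{\delta_0}\subset(\T,0)$, and compare it with $V_{\P^{\frac{n}{2}}}\cap V_{\check\P^{\frac{n}{2}}}$, the locus parameterising honest deformations of the algebraic cycles, which is always contained in $V_{\delta_0}$. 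The comparison is carried out through the power series expansion of periods of Theorem~\ref{InLabelNadasht?}, whose linear term is precisely the infinitesimal variation of Hodge structure: if a finite rank computation shows that the Zariski tangent space of $V_{\delta_0}$ at the Fermat point has the same dimension as the smooth scheme $V_{\P^{\frac{n}{2}}}\cap V_{\check\P^{\frac{n}{2}}}$, then the two analytic schemes coincide, $V_{\delta_0}$ is smooth and reduced, and every nearby deformation of $\delta_0$ is a combination of deformed linear cycles, so the Hodge Conjecture holds all along $V_{\delta_0}$ -- this is the mechanism behind Theorem~\ref{27.03.17}. If instead the tangent space is strictly larger, $V_{\delta_0}$ is either singular at the Fermat point or non-reduced (Theorem~\ref{dream2017}); one has then isolated a Hodge locus not explained by the obvious cycles, and the plan is either to produce a genuinely new algebraic cycle along it (Conjecture~\ref{CanISayFinally?}) or else to regard this as the right place to look for a counterexample.

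The decisive obstacle is the one the strategy deliberately sidesteps: it certifies algebraicity of a Hodge cycle only \emph{modulo} the existence of a nearby algebraic cycle to deform from, and contributes nothing toward producing algebraic cycles out of nothing, which is the actual content of the conjecture. A second, entirely practical, obstacle is that the period matrices and the exact rank computations needed even for the deformation approach grow very rapidly with $n$, with $d$, and with the size of the coefficients $\cf,\check\cf$; this is why the cases one can actually verify form only the short lists appearing in Theorems~\ref{27.03.17}, \ref{WillCome2017} and \ref{dream2017}.
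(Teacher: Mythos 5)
This statement is the Hodge Conjecture itself, which the paper states as Conjecture~\ref{HC} and does not (and could not) prove, so there is no proof in the paper to compare against; your proposal correctly refrains from claiming one. What you describe instead is a faithful account of the paper's actual program --- recasting the Hodge condition as vanishing of periods, then testing algebraicity of specific cycles on the Fermat variety by comparing the Hodge locus $V_{\delta_0}$ with the deformation locus of known algebraic cycles via Theorem~\ref{main1.5} and Theorem~\ref{InLabelNadasht?} --- and you correctly identify the essential limitation, namely that this method certifies algebraicity only by deformation from a known cycle and cannot manufacture cycles from nothing.
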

Using Poincar\'e duality our version of the Hodge conjecture is equivalent to the official one, see for instance 
Deligne's announcement of the Hodge conjecture \cite{Deligne-HodgeConjecture},  however,  we wrote it in this format in order to point out that the Hodge decomposition is not needed in 
its announcement and bring it to its origin  which is the study of integrals due to Abel, Poincar\'e, Picard among many others. 
For a prehistory of the Hodge conjecture see \cite{ho13}, Chapters 2 and 3.

\section{An alternative conjecture}
\label{07.03.2017-2}
The Hodge conjecture does not
give any information about non-vanishing integrals \eqref{26nov2015}. 
In this article we show that explicit computations of \eqref{26nov2015} 
lead us  to verifications of the  following alternative for the Hodge conjecture:
\begin{conj}[Alternative Hodge Conjecture]\rm
\label{IHC}
Let $\{X_t\}_{t\in \T}$ be a family of complex smooth projective varieties of even dimension $n$,  
and let  $Z_0$ be a fixed irreducible algebraic cycle of dimension 
$\frac{n}{2}$ in $X_0$ for $0\in\T$.  
There is an open neighborhood $U$ of $0$ in $\T$ (in the usual topology) such that for all $t\in U$ if 
the monodromy $\delta_t\in H_n(X_t,\Z)$ of $\delta_0=[Z_0]$ is a Hodge cycle,  then there is an algebraic deformation $Z_t\subset X_t$ of 
$Z_0\subset X_0$ such that $\delta_t=[Z_t]$. In other words, deformations of $Z_0$ as a Hodge cycle and as an algebraic cycle 
are the same. 
\end{conj}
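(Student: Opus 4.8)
The plan is to split the asserted equality of loci into its two inclusions and concentrate on the hard one. One inclusion is automatic: any algebraic deformation $Z_t\subset X_t$ of $Z_0$ produces a class $[Z_t]$ that is Hodge, so the \emph{algebraic deformation locus} $A\subset(\T,0)$ (the image in $\T$ of the component of the relative Chow scheme of the universal family $\mathcal{X}\to\T$ through the point $[Z_0]$) is contained in the Hodge locus $V_{\delta_0}$. The entire content of the conjecture is the reverse inclusion near $0$, namely that every $t$ at which the flat transport $\delta_t$ of $\delta_0=[Z_0]$ remains Hodge actually carries an algebraic deformation of $Z_0$. I would therefore reformulate the statement as: the forgetful map from the local deformation space of the pair $(X_0,Z_0)$ to $V_{\delta_0}$ is surjective on a neighborhood $U$ of $0$. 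By the algebraicity of Hodge loci (Cattani--Deligne--Kaplan), $V_{\delta_0}$ is an honest analytic subscheme, so the comparison of $A$ with $V_{\delta_0}$ becomes a scheme-theoretic, order-by-order problem at $0$, in the same spirit as the $N$-reducedness analysis of the present paper.

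At first order I would compute both tangent spaces and compare them through the semiregularity map. By the infinitesimal variation of Hodge structure of \cite{CarlsonGriffiths1980}, $T_0V_{\delta_0}$ is the kernel of cup product against the Hodge class $\delta_0\in H^{\frac{n}{2}}(X_0,\Omega^{\frac{n}{2}})$, which is exactly the linear part of the period Taylor expansion in Theorem \ref{InLabelNadasht?}. On the algebraic side, the map $T_{[Z_0]}A\to T_0\T$ factors through Bloch's semiregularity map $H^1(N_{Z_0/X_0})\to H^{\frac{n}{2}+1}(X_0,\Omega^{\frac{n}{2}-1})$ (codimension of $Z_0$ being $\tfrac{n}{2}$). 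When $Z_0$ is \emph{semiregular}, i.e.\ this map is injective, Bloch's semiregularity theorem---equivalently a $T^1$-lifting argument applied to the pair---shows that deformations of $Z_0$ are unobstructed over $V_{\delta_0}$; then $A\to V_{\delta_0}$ is smooth and surjective and $A=V_{\delta_0}$ as schemes, so the conjecture holds on $U$. The first substantial task is thus to verify injectivity of the semiregularity map, which is where the explicit period formulas of \S\ref{07.03.2017-3} and Theorem \ref{InLabelNadasht?} give genuine computational leverage.

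The main obstacle is the non-semiregular range, which the conjecture does not exclude. There the obstruction to lifting $Z_0$ to order $N$ lands in a cokernel that is a priori strictly larger than the $N$-th order defect of the Hodge condition, so the defining ideals of $A$ and of $V_{\delta_0}$ can diverge and $A$ could sit properly inside $V_{\delta_0}$---precisely the kind of discrepancy tracked, for reducible combinations, in Theorem \ref{dream2017}. I do not expect any purely homological mechanism to force the two ideals to coincide; instead one would have to use the full power series expansion of Theorem \ref{InLabelNadasht?} to write the equations of $V_{\delta_0}$ to all orders and then match each successive obstruction against an actual algebraic deformation of $Z_0$, the third- and fourth-order terms being decisive just as in the proof strategy of Theorem \ref{27.03.17}. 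A second, technical, difficulty is that $Z_0$ is only assumed irreducible (hence possibly singular), so the Hilbert-scheme deformation theory used above must be replaced by Chow- or normalization-based deformation theory, with the semiregularity map reinterpreted accordingly. Because the order-by-order matching of obstructions currently has no uniform argument and succeeds only through explicit computation, I would expect to establish the statement case by case (as in Theorem \ref{27.03.17}) rather than in general; finding a mechanism that controls all higher obstructions simultaneously is, in my view, the crux on which a full proof hinges.
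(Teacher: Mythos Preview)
The statement you are trying to prove is labelled \emph{Conjecture} in the paper, and the paper does not prove it. On the contrary, the paragraph immediately following Conjecture~\ref{IHC} says explicitly that it ``might be false in general, therefore, it might be called a property of $Z_0$,'' and records Deligne's observation that there are additional obstructions (e.g.\ jumps in the dimension of the relevant intermediate Jacobian) to constructing algebraic cycles by deformation. So there is no ``paper's own proof'' to compare your proposal against; the task posed to you was ill-formed for this particular statement.

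That said, your outline is a fair summary of what is known and of where the difficulty lies. The semiregular case you isolate is exactly the one the paper also points to via \cite{Bloch1972}, and the paper stresses that semiregularity is a very restrictive hypothesis (for curves in surfaces it forces $H^1(Z,N_{X/Z})=0$, which already fails for most complete intersection curves). Your final paragraph effectively concedes that outside the semiregular range you have no mechanism to match higher-order obstructions, and you fall back on case-by-case computation; this is precisely the status the paper gives the conjecture, verifying it only in special situations (Theorem~\ref{greenvoisin}, Theorem~\ref{main1}, Theorem~\ref{main2}) rather than in general. In short: your proposal is not a proof, and none is expected here; your honest assessment that the higher-order matching ``currently has no uniform argument'' is the correct conclusion, not a gap to be filled.
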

Before explaining the relation of this conjecture with integrals \eqref{26nov2015}, 
we say few words about the importance of Conjecture \ref{IHC}.
First of all, Conjecture \ref{IHC}  might be false in general, therefore, it might be called a property of $Z_0$. 
P. Deligne  pointed out that there are additional obstructions to the hope that algebraic cycles could 
be constructed by deformation (personal communication, 31 January 2016).  For instance, the dimension of the intermediate Jacobian coming from the largest sub Hodge structure of  
$H^{n-1}(X_0,\Q)\cap (H^{\frac{n}{2},\frac{n}{2}-1}\oplus H^{\frac{n}{2},\frac{n}{2}-1})$  might jump down by deformation.
This observation does not apply to a smooth hypersurface, for which only the middle cohomology is non-trivial. 
We are interested in cases in which Conjecture \ref{IHC} is true, see Theorem \ref{main1} below.  
Both Hodge conjecture  and Conjecture \ref{IHC} claim that a given  Hodge cycle must be algebraic, however, note that 
Conjecture \ref{IHC} provides a candidate for such an algebraic cycle, whereas the Hodge conjecture doesn't,  and so,  it must be easier than the
Hodge conjecture.  Verifications of Conjecture \ref{IHC} support  the Hodge conjecture, however, a counterexample to Conjecture \ref{IHC} might not 
be a counterexample to the Hodge conjecture, because one may have an algebraic cycle homologous to, but 
different from, the  given one in Conjecture \ref{IHC}. 

In \cite{gro66} page 103 Grothendieck states a conjecture which is as follows: 
let $X\to S$ be a smooth morphism of schemes and let $S$ be  connected and reduced. A global section 
$\alpha$ of  $H^{2p}_\dR(X/S)$ is algebraic  at every fiber $s\in S$
if and only if it is a flat section with respect to the Gauss-Manin connection and  
it is algebraic for one point $s\in S$. Conjecture \ref{IHC}, for instance for complete
intersections inside hypersurfaces,   implies this conjecture in the same context, 
however the vice versa is not true. The variety $\T_{\underline{d}}$ defined in \S\ref{15july2016} 
might be a proper subset of a component of 
the Hodge locus. This would imply that $Z$ is homologous to another algebraic cycle 
with a bigger deformation space. This cannot happen for the linear case 
$\underline{d}=(1,1,\cdots,1)$, see Theorem \ref{main1} below, 
and many examples of $n$ and $d$ and $\underline{d}$,  see \cite{Roberto}. 
The  article \cite{Bloch1972} is built upon the Grothendieck's conjecture explained above and
it considers semi-regular algebraic cycles, that is, the semi-regularity map
$\pi: H^1(Z,N_{X/Z})\to H^{\frac{n}{2}+1}(X,\Omega^{\frac{n}{2}-1})$ is injective. 
The semi-regularity is a very strong  condition. For instance, 
for curves inside surfaces, \cite{Bloch1972} only considers the semi-regular curves with 
$H^1(Z,N_{X/Z})=0$. Using Serre duality, one can easily see that 
this is not satisfied for curves with self intersection less than $2g-2$, where $g$ is 
the genus of $Z$. A simple application of adjunction formula shows that apart from few cases,
complete intersection  curves inside surfaces do not satisfy this condition.

In situations where the Hodge conjecture
is true, for instance for surfaces, Conjecture \ref{IHC} is still a non-trivial statement. 
For a smooth hypersurface $X\subset \Pn 3$  of degree $d\geq 4$ and 
a line $\Pn 1\subset X$, deformations of $\Pn 1$ as a Hodge cycle  and as an algebraic curve
are the same. This follows from classical IVHS techniques introduced in \cite{CGGH1983}. 
In \cite{green1988, green1989}, \cite{voisin1988}, Green and Voisin prove a stronger statement which says that the space of surfaces $X\subset \Pn 3$ containing a line $\Pn 1$ is
the only component of the Noether-Lefschetz locus of minimum codimension $d-3$. In order to reproduce the full statement of Green and
Voisin's results in our context and in a neighborhood of the Fermat point, see Conjecture \ref{1apr2017} and the comments after. 
 In a similar way  some other results of Voisin on Noether-Lefschetz loci, see \cite{voisin90},
 fit into the framework of Conjecture \ref{IHC}. 
 A weaker version of the mentioned statement  in higher dimensions is generalized in the following way:
 \begin{theo}[\cite{GMCD-NL} Theorem 2]
 \label{main1}
 For any smooth hypersurface $X$ of degree $d$ and dimension $n$ in a Zariski neighborhood of the Fermat variety 
 with $d\geq 2+\frac{4}{n}$ and a linear projective space $\Pn {\frac{n}{2}}\subset X$, 
deformations of $\Pn {\frac{n}{2}}$ as an algebraic cycle and Hodge cycle are the same. 
\end{theo}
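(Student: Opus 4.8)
The inclusion of germs $V_{\P^{\frac{n}{2}}}\subseteq V_{[\P^{\frac{n}{2}}]}$ at the Fermat point $0\in\T$, and at every other point of $\T_{\underline 1}$, is automatic: deforming $\P^{\frac{n}{2}}$ algebraically along with the hypersurface keeps its fundamental class algebraic, hence Hodge. The content of the theorem is the reverse inclusion on a Zariski neighbourhood of $0$, and the plan is to get it from a tangent-space count together with a semicontinuity argument that spreads the equality out from the Fermat point. First, $V_{\P^{\frac{n}{2}}}$ is smooth near $0$: it is the image in $\T$ of the incidence variety $\{(X,L)\,:\,L\cong\P^{\frac{n}{2}}\subset X\}$, which is a projective bundle over the Grassmannian of $\frac{n}{2}$-planes (the fibre over $L$ being $\P(I(L)_d)$) and hence smooth, and the projection to $\T$ is an immersion at $(X_0,\P^{\frac{n}{2}})$ because $\P^{\frac{n}{2}}$ is rigid in $X_0$, i.e. $H^0(\P^{\frac{n}{2}},N_{\P^{\frac{n}{2}}/X_0})=0$; this vanishing holds for $d\ge 3$, and $d\ge 2+\frac{4}{n}$ forces $d\ge 3$. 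Second, $T_XV_{[\P^{\frac{n}{2}}]}$ is the kernel of a linear map on $\C[x]_d$ whose coefficients (periods of $X$) vary holomorphically in $X$, so $X\mapsto\dim T_XV_{[\P^{\frac{n}{2}}]}$ is upper semicontinuous on the irreducible variety $\T_{\underline 1}$. Therefore, once one checks $\dim T_0V_{[\P^{\frac{n}{2}}]}=\dim V_{\P^{\frac{n}{2}}}$ at $0$, the locus in $\T_{\underline 1}$ where the tangent space is strictly larger is a proper Zariski-closed subset missing $0$, and outside it the chain $\dim V_{\P^{\frac{n}{2}}}\le\dim V_{[\P^{\frac{n}{2}}]}\le\dim T_XV_{[\P^{\frac{n}{2}}]}=\dim V_{\P^{\frac{n}{2}}}$ collapses to equalities, giving $V_{[\P^{\frac{n}{2}}]}=V_{\P^{\frac{n}{2}}}$ there. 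Thus everything reduces to the two tangent-space computations at the Fermat point.

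Let $R_\bullet=\C[x_0,\dots,x_{n+1}]/\jacob(f_0)$ be the graded Jacobian ring of the Fermat polynomial $f_0=x_0^d+\cdots+x_{n+1}^d$; it is the Artinian Gorenstein complete intersection $R\cong\bigl(\C[x_0]/(x_0^{d-1})\bigr)\otimes\cdots\otimes\bigl(\C[x_{n+1}]/(x_{n+1}^{d-1})\bigr)$, with socle in degree $\sigma=(n+2)(d-2)$, and Griffiths' residue theory gives $H^{\frac{n}{2}+1,\frac{n}{2}-1}(X_0)\cong R_{\frac{nd}{2}-n-2}$, $H^{\frac{n}{2},\frac{n}{2}}_{\mathrm{prim}}(X_0)\cong R_{(\frac{n}{2}+1)d-n-2}$, with the infinitesimal variation of Hodge structure realized by multiplication $R_d\times R_{\frac{nd}{2}-n-2}\to R_{(\frac{n}{2}+1)d-n-2}$. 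Let $\xi_0\in R_{(\frac{n}{2}+1)d-n-2}$ be the element representing the primitive part of the Hodge class $[\P^{\frac{n}{2}}]$; it is pinned down by the explicit Beta-value periods of linear cycles recalled in \S\ref{07.03.2017-3}, via the Macaulay pairing. Griffiths transversality plus perfectness of $R_a\times R_{\sigma-a}\to R_\sigma\cong\C$ then identifies
\[
T_0V_{[\P^{\frac{n}{2}}]}=\bigl\{\,g\in\C[x]_d\ :\ g\cdot\xi_0=0\ \hbox{ in }\ R_{(\frac{n}{2}+2)d-n-2}\,\bigr\}.
\]
The hypothesis $d\ge 2+\frac{4}{n}$ is precisely $\frac{nd}{2}-n-2\ge 0$, i.e. $h^{\frac{n}{2}+1,\frac{n}{2}-1}(X_0)=\dim R_{\frac{nd}{2}-n-2}\ge 1$; without it this graded piece vanishes, the condition above is empty, $V_{[\P^{\frac{n}{2}}]}=\T$ near $0$, and the statement fails — so the bound is genuinely needed (the boundary cases $(n,d)=(2,4),(4,3)$, where the single remaining equation already cuts out a smooth divisor, are excluded).

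For the algebraic side, write $f_0=\sum_j\ell_j h_j$ with $\ell_j$ the linear forms defining $\P^{\frac{n}{2}}$ and $h_j\in\C[x]_{d-1}$; the immersion above yields $T_0V_{\P^{\frac{n}{2}}}=I(\P^{\frac{n}{2}})_d+\sum_j\C[x]_1\cdot h_j$, the degree-$d$ part of the ideal of the plane enlarged by the finitely many correction directions coming from infinitesimal motions of the plane. The inclusion $T_0V_{\P^{\frac{n}{2}}}\subseteq T_0V_{[\P^{\frac{n}{2}}]}$ is ``algebraic implies Hodge'' (deforming $f_0$ inside $I(\P^{\frac{n}{2}})$, or moving the plane together with the hypersurface, preserves algebraicity of the class). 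The heart of the proof is the reverse inclusion: any $g\in\C[x]_d$ with $g\xi_0=0$ in $R$ must lie in $I(\P^{\frac{n}{2}})_d+\sum_j\C[x]_1 h_j$. Here I would use the tensor-product structure of $R$: for the Fermat plane one computes $\xi_0$ and the restrictions $h_j|_{\P^{\frac{n}{2}}}$ explicitly (both are products of one-variable monomials up to nonzero Beta-value constants), so the required identity becomes a combinatorial statement about monomials in $n+2$ variables, in which Macaulay duality for $R$ supplies the surjectivity making the two dimensions coincide — and this is exactly where the size of $R_{\frac{nd}{2}-n-2}$, hence the bound $d\ge 2+\frac{4}{n}$, is used. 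I expect this monomial-level identification of $\{g:g\xi_0=0\}_d$, together with the bookkeeping of the motion-of-plane corrections and of reducedness at $0$, to be the main obstacle; once it is done, the semicontinuity argument of the first paragraph upgrades it to a Zariski neighbourhood of the Fermat point and concludes. For $n=2$ this recovers the Green--Voisin theorem, and in general it is Theorem 2 of \cite{GMCD-NL}.
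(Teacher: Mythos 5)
Your plan is essentially the paper's own route: Theorem \ref{main1} is obtained there by combining Theorem \ref{main1.5} (the Zariski tangent space of $V_{[\P^{\frac{n}{2}}]}$ is the kernel of the period matrix $[\pn_{i+j}]$, which in your Jacobian-ring language is exactly $\{g:\ g\cdot\xi_0=0\}$) with the rank identity \eqref{12nov2015-2016}, verified via the explicit linear-cycle periods of Theorem \ref{14out2016}, and then comparing codimensions with the smooth locus $V_{\P^{\frac{n}{2}}}$ of codimension $\binom{\frac{n}{2}+d}{d}-(\frac{n}{2}+1)^2$. The step you flag as the ``main obstacle'' --- the monomial-level verification that $\{g:\ g\xi_0=0\}_d$ has precisely this codimension --- is exactly the computation the paper defers to \cite{GMCD-NL}, so your outline does not diverge from the argument actually used.
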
 

\section{Infinitesimal variation of Hodge structures for Fermat variety}
\label{15july2016}
The relation between integrals \eqref{26nov2015} and Conjecture \ref{IHC} is established through the so-called
infinitesimal variation of Hodge structures developed in \cite{CGGH1983}. 
This is explained in \cite{GMCD-NL}, where the author 
has tried to keep the classical language of IVHS, and so we do not reproduce it here. The main application is going to be 
on Hodge and Noether-Lefschetz loci. 
The reader is referred to 
Voisin's expository article \cite{voisinHL} which contains a full exposition and main references on this topic.

In order to keep the content of this text elementary, we explain
this for complete intersection algebraic cycles  inside hypersurfaces, and in particular, the Fermat variety.
Let $\T$ be the parameter space of smooth hypersurfaces of degree $d$ in $\Pn {n+1}$. A hypersurface $X=X_t,\ t\in \T$ 
is  given by the projectivization  of $f(x_0,x_1,\cdots, x_{n+1})=0$, where $f$ is a homogeneous polynomial of degree $d$.  
Fix integers $1\leq d_1,d_2,\ldots, d_{\frac{n}{2}+1}\leq d$ and $\underline d :=(d_1,d_2,\ldots, d_{\frac{n}{2}+1})$.
Let   $\T_{\underline d}\subset \T$ 
be the parameter space of smooth hypersurfaces with
$$
f=f_1f_{\frac{n}{2}+2}+\cdots+ f_{\frac{n}{2}+1}f_{n+2},\ \ \deg(f_i)=d_i,\ \deg(f_{\frac{n}{2}+1+i})=d-d_i,
$$
where $f_i$'s are homogeneous polynomials. 
The algebraic cycle
\begin{equation}
 \label{golnargolnar}
Z:=\Pn {}\{f_1=f_2=\cdots=f_{\frac{n}{2}+1}=0\}\subset X
\end{equation}
is called a complete intersection (of type $\underline d$) in $X$.  Note that this cycle is a complete intersection in $\Pn {n+1}$
and it is not a complete intersection of $X$ with other hypersurfaces.  
Let 
\begin{equation}
\label{11.06.2017}
 \omega_i:=\Resi
 \left( 
\frac{x^i\cdot \sum_{j=0}^{n+1}(-1)^j x_j\ dx_0\wedge \cdots \wedge \widehat dx_j\wedge \cdots\wedge dx_{n+1}}
     {f^{k}}
\right)
\end{equation}
with $k:=\frac{n+2+\sum_{e=0}^{n+1}i_e}{d}$, where $\Resi : H^{n+1}_\dR(\Pn {n+1}-X)\to H^n_\dR(X)$ 
is the residue map and $x^i=x_0^{i_0}\cdots x_{n+1}^{i_{n+1}}$. 
After Griffiths \cite{gr69}, we know that $\delta\in H_n(X,\Z)$ is a Hodge cycle if and only
if 
\begin{equation}
\label{8.4.2017}
\int_{\delta}\omega_i=0,\ \ \forall i \ \ \ \hbox{  with  }\ \ \ \   \frac{n+2+\sum_{e=0}^{n+1}i_e}{d}\leq \frac{n}{2}. 
\end{equation}
A cycle $\delta\in H_n(X,\Z)$ is called primitive if its intersection with $[\plc]$ is zero. Recall
that $\plc$ is  the intersection of a linear $\Pn {\frac{n}{2}+1}\subset \Pn {n+1}$ with $X$. The $\Z$-module 
$H_n(X,\Z)_\prim$ by definition is the set of primitive cycles. 
We denote by $\hodge_n(X,\Z)\subset H_n(X,\Z)$ the $\Z$-modules of $n$-dimensional Hodge
cycles in $X$, and by $\hodge_n(X,\Z)_\prim$ its submodule consisting of primitive cycles. 
All the $\Z$-modules in this text are up to torsions, and hence they are free. 

Let us now focus on the Fermat variety $X^d_n$ which is obtained by the projectivization of 
\begin{equation}
\label{27nov2015}
X^d_n:   \ \  x_0^{d}+x_1^{d}+\cdots+x_{n+1}^d=0. 
\end{equation}
We denote   by $0\in \T$ the point corresponding to $X^d_n$, that is,  $X_0=X^d_n$.
Hodge cycles of the Fermat variety have been extensively studied by Shioda in his  seminal works 
\cite{Shioda1979, sh79,  Shioda1981}.
We are mainly interested in the Hodge cycles $[Z]$, where $Z$ is a complete intersection
of type  $\underline{d}$ in $\Pn {n+1}$ which lies in  
$X^d_n$. This is because all the examples of $n$ and $d$ in which the Hodge conjecture is known for $X^d_n$, 
one has only used this type of algebraic cycles, see \cite{ho13} Chapter 17.  
The periods of a Hodge cycle $\delta\in \hodge_n(X^d_n,\Z)$ are defined
in the following way
 \begin{equation}
 \label{artan}
\pn_i=\pn_i(\delta):=\frac{1}{(2\pi \sqrt{-1})^{\frac{n}{2}}}\int_{\delta}\omega_i,\ \ 
\end{equation}
$$
\sum_{e=0}^{n+1}i_e=(\frac{n}{2}+1)d-(n+2).
 $$
Using  Deligne's result in \cite{dmos} Proposition 1.5, we know that $\pn_i$'s are 
in an abelian extension of of $\Q(\zeta_{d})$. If $\pn_i$'s are all zero then $\delta$ is necessarily in the one
dimension $\Q$-vector space generated by $[\plc]$.
We are going to explain the role of these numbers in the deformation of Hodge cycles.
\begin{defi}\rm
\label{05/04/2017}
For natural numbers  \(N\), \(n\) and \(d\) let us define
\begin{equation}
\label{21oct2014}
I_N:=\left \{ (i_0,i_1,\ldots,i_{n+1})\in {\mathbb Z}^{n+2}\mid 0\leq i_e\leq d-2, \ \ i_0+i_1+\cdots+
i_{n+1}=N\right\}
\end{equation}
Assume that \(n\) is even and \(d\geq 2+\frac{4}{n}\).  Consider complex numbers \(\pn_i\)  
indexed by \(i\in I_{(\frac{n}{2}+1)d-n-2}\). 
For any other \(i\) which is not in the set \(I_{(\frac{n}{2}+1)d-n-2}\),  we define \(\pn_i\) to be  zero.
Let \([\pn_{i+j}]\)  be a matrix whose rows and columns are indexed by \(i\in I_{\frac{n}{2}d-n-2}\) and \(j\in I_d\), 
respectively, and in its \((i,j)\) entry we have \(\pn_{i+j}\). 
\end{defi}
The numbers $\#I_d, \#I_{\frac{n}{2}d-n-2},\ \#I_{(\frac{n}{2}+1)d-n-2}$ are respectively, 
the dimension of the moduli space, $(\frac{n}{2}+1,\frac{n}{2}-1)$ Hodge number and $(\frac{n}{2},\frac{n}{2})$
Hodge number minus one, of smooth hypersurfaces of dimension $n$  and degree $d$.
The following theorem justifies the importance of the algebraic numbers $\pn_i$'s in \eqref{artan}. 
\begin{theo}
\label{main1.5}
Let $X^d_n$ be the Fermat variety of dimension $n$ and degree $d$ parameterized by the point $0\in\T$.
 Let also  $\delta_0\in \hodge_n(X^d_n,\Z)$ be a Hodge cycle. 
The kernel of the matrix $[\pn_{i+j}]$ is canonically identified with the Zariski tangent space  of the 
Hodge locus $V_{\delta_0}$ passing through 
$0\in\T$ and corresponding to $\delta_0$.
\end{theo}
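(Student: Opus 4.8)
The plan is to identify the Hodge locus $V_{\delta_0}$ with a zero scheme cut out by period integrals, compute the differential of the defining map at the Fermat point $0\in\T$, and recognize the resulting linear map as the multiplication matrix $[\pn_{i+j}]$. First I would recall the Griffiths description of $H^n_\dR(X_t)$ via the Jacobian ring: for a hypersurface $X_t=\{f_t=0\}$, the primitive cohomology in bidegree $(\frac{n}{2}-1+a,\frac{n}{2}+1-a)$ is $(\C[x]/\jacob(f_t))_{ad-n-2}$, and the forms $\omega_i$ of \eqref{11.06.2017} give representatives indexed by monomials $x^i$. A cycle $\delta_t$ remains Hodge at $t$ exactly when $\int_{\delta_t}\omega_i=0$ for all $i$ with $\sum i_e \le \frac{n}{2}d - n - 2$, i.e.\ for $i\in I_{\frac n2 d - n -2}$ (the borderline Hodge number piece); by Griffiths \eqref{8.4.2017} the lower-degree conditions are automatic once the top one holds, which is why only the single graded piece $I_{\frac n2 d - n - 2}$ appears. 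So $V_{\delta_0}$ is scheme-theoretically the common zero locus, near $0$, of the holomorphic functions $t\mapsto \int_{\delta_t}\omega_i$, $i\in I_{\frac n2 d - n -2}$, where $\delta_t$ is the flat (Gauss--Manin) transport of $\delta_0$.

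Next I would differentiate these functions at $t=0$. Writing $f_t = f_0 + \sum_{j} t_j x^j$ with $x^j$ ranging over a monomial basis of $(\C[x]/\jacob(f_0))_d \cong I_d$, the derivative of $\int_{\delta_t}\omega_i$ in the direction $x^j$ is, by Griffiths' formula for the Gauss--Manin connection, a period $\int_{\delta_0}$ of a form whose Jacobian-ring class is $x^i\cdot x^j = x^{i+j}$ up to the relevant constant; that is, $\frac{\partial}{\partial t_j}\int_{\delta_t}\omega_i\big|_0 = c\cdot \pn_{i+j}(\delta_0)$ for a nonzero universal constant, with the convention $\pn_{i+j}=0$ when $i+j\notin I_{(\frac n2+1)d-n-2}$. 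This is exactly the $(i,j)$ entry of the matrix $[\pn_{i+j}]$ from Definition \ref{05/04/2017}. Hence the Jacobian (= differential) of the defining system of $V_{\delta_0}$ at $0$ is the matrix $[\pn_{i+j}]$, and the Zariski tangent space $T_0 V_{\delta_0}$ is its kernel, giving the claimed canonical identification.

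The main technical point — and the one I would spend the most care on — is justifying the reduction to a single graded piece and making the identification of the derivative with Jacobian-ring multiplication canonical and constant-correct. One must check that the functions $\int_{\delta_t}\omega_i$ for $i$ in the strictly-smaller-degree sets $I_{ad-n-2}$, $a<\frac n2$, vanish identically on a neighborhood once $\delta_0$ is Hodge (not merely to first order): this follows because those integrals detect the Hodge filtration strictly above the middle, which is stable under the Hodge cycle condition along the whole locus — equivalently, one argues via $\nabla$-flatness of $\delta_t$ together with Griffiths transversality and the fact that $\delta_0$ pairs to zero with $F^{\frac n2+1}$. Concretely one shows $\int_{\delta_t}\omega_i$ for such $i$ is an $\O$-linear combination of the top-degree ones, so it contributes nothing new to either the ideal or its linearization. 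The remaining ingredient is the explicit Gauss--Manin / Griffiths computation identifying $\partial_{t_j}$ of a period with the period of the cup-product class $x^{i+j}$; this is classical (it is the content of IVHS as in \cite{CGGH1983, CarlsonGriffiths1980}), and for the Fermat point the Jacobian ring is the explicit monomial algebra underlying Definition \ref{05/04/2017}, so the bookkeeping of indices matches $I_d$, $I_{\frac n2 d-n-2}$, $I_{(\frac n2+1)d-n-2}$ on the nose. Assembling these, $T_0V_{\delta_0} = \ker[\pn_{i+j}]$ canonically.
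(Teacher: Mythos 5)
Your argument is correct and is essentially the paper's own route: the paper proves Theorem \ref{main1.5} by combining Voisin's description of the Zariski tangent space of the Hodge locus (\cite{vo03}, 5.3.3) with the explicit IVHS/Jacobian-ring computation at the Fermat point from \cite{GMCD-NL}, which is exactly the reduction to the graded piece $I_{\frac{n}{2}d-n-2}$ via Griffiths transversality plus the Gauss--Manin differentiation $\partial_{t_j}\int_{\delta_t}\omega_i\mapsto c\,\pn_{i+j}$ that you carry out. (One cosmetic slip: your parenthetical bidegree $(\frac{n}{2}-1+a,\frac{n}{2}+1-a)$ for the graded piece of ring degree $ad-n-2$ should be $(n+1-a,a-1)$; the index sets you actually work with are the correct ones.)
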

The Hodge locus mentioned in the above theorem is actually the analytic scheme defined by  
\begin{equation}
\label{10maio16}
\O_{V_{\delta_0}}:=
\O_{\T,0}\Bigg/\left\langle \mathlarger{\int}_{\delta_t}\omega_1, \mathlarger{\int}_{\delta_t}\omega_2, \cdots, \mathlarger{\int}_{\delta_t}\omega_a \right\rangle 
\end{equation}
where $\omega_1,\omega_2,\cdots,\omega_a$ are  reindexed $\omega_i$'s in \eqref{8.4.2017}. These are  sections of 
the cohomology bundle  $H^n_\dR(X_t),\ t\in(\T,0)$ such that for 
$t\in(\T,0)$ they form a basis of  $F^{\frac{n}{2}+1}H^n_\dR(X_t)$, where $F^i$'s are the pieces
of the Hodge filtration of $H^{n}_\dR(X_t)$. Its points are all $t$ in a small neighborhood of $0$
such that the monodromy $\delta_t\in H_n(X_t,\Z)$ of $\delta_0$ is a Hodge cycle, or equivalently, 
$\mathlarger{\int}_{\delta_t}\omega_1=\mathlarger{\int}_{\delta_t}\omega_2=\cdots=\mathlarger{\int}_{\delta_t}\omega_a=0$. 
This is a local 
analytic subset of $\T$ and by a deep theorem of 
Cattani-Deligne-Kaplan in\cite{cadeka} we know that it is algebraic. This together with the fact that Hodge cycles
of the Fermat variety are absolute and Deligne's Principle B in \cite{dmos} implies that such an algebraic set is defined 
over $\bar\Q$,  for details see 
\cite{voisinHL} Proposition 5.7.
The Hodge locus in $\T$ is the union of all such 
local loci defined as before for all $t\in\T$ (one might take different $\omega_i$'s as in \eqref{8.4.2017}). 
Theorem \ref{main1.5} follows from   Voisin's result  \cite{vo03} 5.3.3 on the Zariski tangent
space of the Hodge locus and the  computations of the infinitesimal variation of Hodge structures  for the Fermat variety in \cite{GMCD-NL}.
An alternative proof using some ideas of holomorphic foliations is given in the later reference.  
\begin{theo}
\label{main2}
Let $X^d_n$  be the Fermat variety \eqref{27nov2015} and let $Z$ be a complete intersection of type $\underline{d}$
inside $X^d_n$.  Let also $\pn_i$ be the periods of $\delta=[Z]$ defined in \eqref{artan}. 
If
  \begin{equation}
  \label{12nov2015}
 \rank([\pn_{i+j}]) =
 \bn{n+1+d}{n+1}-
 \sum_{k=1}^{n+2}(-1)^{k-1} \sum_{a_{i_1}+a_{i_2}+\cdots+a_{i_k}\leq d }\bn{n+1+d-a_{i_1}-a_{i_2}-\cdots-a_{i_k}}{n+1}
\end{equation}
where $(a_1,a_2,\ldots,a_{n+2})=(d_1,d_2,\ldots,d_{\frac{n}{2}+1},d-d_1,d-d_2,\ldots,d-d_{\frac{n}{2}+1})$
and the second sum runs through all $k$ elements (without order) of $a_i,\ \ i=1,2,\ldots,n+2$,  
then $\T_{\underline d}$ is a component of the Hodge locus. In particular 
Conjecture \ref{IHC} is true for smooth hypersurfaces $X\subset \Pn {n+1}$ containing 
a complete intersection of type $\underline{d}$,  and in a non-empty Zariski open subset of $\T_{\underline{d}}$.
 \end{theo}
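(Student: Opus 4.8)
The plan is to combine Theorem \ref{main1.5} with a purely dimension-theoretic argument comparing the Zariski tangent space of the Hodge locus $V_{\delta_0}$ at $0$ with the dimension of the irreducible variety $\T_{\underline d}$. By Theorem \ref{main1.5}, the kernel of the period matrix $[\pn_{i+j}]$ is canonically the Zariski tangent space $T_0 V_{\delta_0}$, so its dimension is $\#I_d - \rank([\pn_{i+j}])$, where $\#I_d = \dim\T$ is the dimension of the ambient moduli space. Thus the hypothesis \eqref{12nov2015} should be read as the statement that $\rank([\pn_{i+j}])$ equals the expected codimension of $\T_{\underline d}$ in $\T$, i.e.\ that $\dim T_0 V_{\delta_0} = \dim \T_{\underline d}$. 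The first step is therefore to verify that the right-hand side of \eqref{12nov2015} is exactly $\mathrm{codim}_\T \T_{\underline d} = \#I_d - \dim\T_{\underline d}$. This is a Hilbert-polynomial computation: the tangent space to $\T_{\underline d}$ at a point where $f = f_1 f_{\frac n2+2}+\cdots+f_{\frac n2+1}f_{n+2}$ is (modulo the polynomials $f$ itself coming from reparametrization) the degree-$d$ piece of the ideal $(f_1,\dots,f_{\frac n2+1}, f_{\frac n2+2},\dots,f_{n+2})$, and by inclusion-exclusion on the Koszul complex of a regular sequence of forms of degrees $a_1,\dots,a_{n+2}$ one gets precisely the alternating sum over subsets appearing on the right of \eqref{12nov2015}, subtracted from $\binom{n+1+d}{n+1}=\dim\C[x]_d$. (One must check the generic regularity of the sequence, which holds on a non-empty Zariski open subset of $\T_{\underline d}$, and account correctly for the trivial deformations; this bookkeeping is where I would be most careful.)

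Second, I would use the obvious inclusion of germs $\T_{\underline d} \subseteq V_{\delta_0}$ near $0$: any hypersurface $X_t$ containing a complete intersection $Z_t$ of type $\underline d$ which specializes to $Z$ carries the class $\delta_t=[Z_t]$, which is algebraic, hence Hodge, so $t\in V_{\delta_0}$; and the monodromy of $\delta_0$ along such a path is $\delta_t$ by flatness of the family of cycles. This gives $\dim\T_{\underline d} \le \dim_0 V_{\delta_0} \le \dim T_0 V_{\delta_0}$. Combined with Step~1 and hypothesis \eqref{12nov2015}, all three quantities coincide: $\dim\T_{\underline d} = \dim_0 V_{\delta_0} = \dim T_0 V_{\delta_0}$. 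Since $\T_{\underline d}$ is irreducible (it is the image of an irreducible incidence variety of tuples $(f_1,\dots,f_{n+2})$) and is contained in the analytic germ $V_{\delta_0}$ with equal dimension equal to the embedding dimension, $\T_{\underline d}$ must be a whole irreducible component of $V_{\delta_0}$ through $0$ — indeed, $V_{\delta_0}$ is smooth at $0$ and locally equal to $\T_{\underline d}$. Here I would invoke that a reduced analytic germ whose embedding dimension equals the dimension of an irreducible closed analytic subgerm forces that subgerm to be a smooth component.

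Third, to pass from the Fermat point to a non-empty Zariski open subset of $\T_{\underline d}$ and to conclude Conjecture \ref{IHC}, I would argue by semicontinuity and the algebraicity of the Hodge locus (Cattani–Deligne–Kaplan, as recalled after Theorem \ref{main1.5}). The function $t\mapsto \dim T_t V_{\delta_t}$ is upper semicontinuous along $\T_{\underline d}$, so the locus where it attains its minimum value $\dim\T_{\underline d}$ — which by Step~2 includes $0$ — is Zariski open and non-empty in $\T_{\underline d}$; at every such point the same argument shows $\T_{\underline d}$ is a smooth component of the corresponding Hodge locus, whence for any Hodge cycle $\delta_t$ arising by monodromy from a nearby point, its deformation as a Hodge cycle stays inside $\T_{\underline d}$ and is therefore realized by the algebraic deformation $Z_t$ of $Z$. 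That is exactly the assertion of Conjecture \ref{IHC} for hypersurfaces containing a complete intersection of type $\underline d$, on that Zariski open set. The main obstacle I anticipate is Step~1: getting the Koszul/inclusion–exclusion count to match \eqref{12nov2015} on the nose, including the correct treatment of the reparametrization directions and the precise locus of regularity of the sequence $f_1,\dots,f_{n+2}$; everything after that is comparison of dimensions plus the cited structural theorems.
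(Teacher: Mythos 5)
Your proposal is correct and follows essentially the same route as the paper: the paper also observes that the right-hand side of \eqref{12nov2015} is $\codim_\T(\T_{\underline d})$ (a Koszul-complex computation, cited to Proposition 17.5 of \cite{ho13}) and then combines this with Theorem \ref{main1.5} and the inclusion $\T_{\underline d}\subseteq V_{\delta_0}$ to force equality of dimensions and hence smoothness of the Hodge locus along $\T_{\underline d}$. Your Steps 2 and 3 simply spell out the dimension comparison and the spreading-out that the paper leaves implicit.
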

The number in the right hand
side of \eqref{12nov2015} is actually the codimension of $\T_{\underline d}$  in $\T$, see \cite{ho13} Proposition 17.5, and
so Theorem \ref{main2} is a consequence of this fact and Theorem \ref{main1.5}, see \cite{ho13} Theorem 17.6. 
For arbitrary $d$ and $n$ the hypothesis of Theorem \ref{main2} is verified for projective spaces 
$Z=\Pn {\frac{n}{2}}\subset X$, that is, for the case $\underline{d}=(1,1,\ldots,1)$.  This is
 \begin{equation}
  \label{12nov2015-2016}
 \rank([\pn_{i+j}]) =
 \bn{\frac{n}{2}+d}{d}-(\frac{n}{2}+1)^2,
\end{equation}
see \cite{GMCD-NL}.  
In this way we have
derived Theorem \ref{main1}. For this particular class of algebraic cycles, it is possible to prove the
identity \eqref{12nov2015-2016} without computing $\pn_i$'s. 
 We may expect or conjecture
that the equality \eqref{12nov2015} is always true. This is the case for  many examples of 
complete intersection algebraic cycles worked out in \cite{Roberto}. This includes the author's favorite example 
$(n,d)=(4,6)$, that is, the sextic Fermat fourfold:
\begin{equation}
X^6_4: x_0^6+x_1^6+x_2^6+x_3^6+x_4^6+x_5^6=0.
\end{equation}
The Hodge numbers of the fourth cohomology of a smooth sextic fourfold is 1,426,1752, 426,1, and the Fermat sextic 
fourfold has a very peculiar property that the $\Q$-vector space of its Hodge cycles has the maximum
dimension which is 1752.  In this case the matrix $[\pn_{i+j}]$ is a quadratic $426\times 426$ matrix. 
We have only $10$ possibilities for the locus $\T_{\underline{d}}$  of hypersurfaces with a complete intersection algebraic cycle. 
The corresponding data
are listed in the table below: 
 \begin{center}
\begin{tabular}{|c|c|c|c|}
\multicolumn{4}{c}{Codimension of the loci of complete intersection algebraic cycles}
\\ 
\hline
$(d_1,d_2,d_3)$                   & $\codim_\T(\T_{\underline{d}})$ &  
$(d_1,d_2,d_3)$                   & $\codim_\T(\T_{\underline{d}})$ 
\\  \hline  
$(1,1,1)$                               & 19  &  $(1,3,3)$ &  71 \\ \hline 
$(1,1,2)$                                & 32 & $(2,2,2)$ & 92  \\ \hline
$(1,1,3)$                                & 37  & $(2,2,3)$&  106  \\ \hline
$(1,2,2)$                                &  54 &  $(2,3,3)$&  122    \\ \hline
$(1,2,3)$                                & 62   &  $(3,3,3)$ &  141  \\  \hline         
\end{tabular}
\label{digaque}
\end{center}
The Fermat cubic tenfold 
$$
X^{3}_{10}:\ x_0^3+x_1^3+\cdots+x_{11}^{3}=0
$$
has the Hodge numbers $0,0,0,1,220,925,220,1,0,0,0$ and 
the $\Q$-vector space of its Hodge cycles has the maximum dimension which is $925$.
In this case the Hodge conjecture can be verified using linear cycles $\Pn 5$,
see Theorem \ref{24.07.2016}. We have only one possibility for $\T_{\underline{d}}$. This is namely $\underline{d}=(1,1,1,1,1,1)$.
Its codimension is $20$. 

\section{General Hodge cycles for Fermat variety}
\label{07.03.2017-3}
We say that a Hodge cycle $\delta\in \hodge_n(X^d_n,\Q)$ is general if $\rank[\pn_{i+j}]$ attains the maximal rank, that is,
\begin{equation}
\label{28jan2016}
\rank[\pn_{i+j}]={\rm minimum}\{\#I_d, \# I_{\frac{n}{2}d-n-2}\}.
\end{equation}
Note that $\# I_{\frac{n}{2}d-n-2}$ (resp, $\#I_d$) is the number of rows (resp. columns) of $[\pn_{i+j}]$. 
If there exists a general
Hodge cycle then  the subvariety of  $\hodge_n(X^d_n,\Q)$ given by $\rank[\pn_{i+j}]<{\rm min}\{\#I_d, \# I_{\frac{n}{2}d-n-2}\}$ is proper and
so there is a Zariski open subset $U$ of $\hodge_n(X^d_n,\Q)$ such that all $\delta\in U$ are general. 
This will hopefully justify the name. Moreover, Theorem \ref{main1.5} implies that for 
a general Hodge cycle $\delta$ the Hodge locus $V_{\delta}$ is always smooth and reduced. 
\begin{conj}
\label{29/1/16}
The Fermat variety $X^d_n$ for $d\geq 2+\frac{4}{n}$ has always a general Hodge cycle. 
\end{conj}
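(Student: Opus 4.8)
The plan is to reduce the existence of a general Hodge cycle to an explicit rank computation for a single, very concrete cycle, and then to exploit the combinatorial structure of the periods of linear cycles. First I would recall that by the discussion in \S\ref{07.03.2017-3}, it suffices to exhibit one Hodge cycle $\delta\in\hodge_n(X^d_n,\Q)$ for which the matrix $[\pn_{i+j}]$ attains the maximal rank $\min\{\#I_d,\#I_{\frac{n}{2}d-n-2}\}$; the locus where the rank drops is then a proper Zariski-closed subset of $\hodge_n(X^d_n,\Q)$, and genericity follows. The natural candidate is a linear cycle $\P^{\frac{n}{2}}\subset X^d_n$, or more flexibly a $\Z$-linear combination of the linear cycles in \eqref{CarolinePilar} and their $\mu_d$-translates, whose periods are given by the explicit formula recalled in \S\ref{07.03.2017-3} (the gamma-product / Jacobi-sum expression referenced around \eqref{6apr2017}). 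By Theorem \ref{main1.5} the rank of $[\pn_{i+j}]$ for such a cycle is intrinsic, so the problem becomes: estimate from below the rank of a structured matrix whose entries are explicit algebraic numbers indexed by $i\in I_{\frac{n}{2}d-n-2}$, $j\in I_d$.

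The key steps, in order, would be: (1) write the period $\pn_i$ of a single linear cycle $\P^{\frac{n}{2}}$ as a product $\prod_{e} c(i_e)$ of one-variable factors (each a ratio of values of $\Gamma$ at rationals with denominator $d$), so that $[\pn_{i+j}]$ acquires a near-tensor structure; (2) use the action of the large automorphism group — the diagonal torus $(\mu_d)^{n+2}/\mu_d$ acting on $X^d_n$ and permutations of coordinates — to produce a family of Hodge cycles $g^*\delta$ whose period matrices are obtained from $[\pn_{i+j}]$ by multiplying rows and columns by roots of unity; (3) observe that a suitable $\Q$-linear combination $\delta=\sum a_g\, g^*[\P^{\frac{n}{2}}]$ has period matrix whose generic specialization of the coefficients $a_g$ has maximal rank iff a certain "averaged" matrix does, reducing to a Vandermonde-type nonvanishing; (4) prove that Vandermonde-type nonvanishing by a character-sum / linear-independence-of-characters argument over $\Q(\zeta_d)$, using that the relevant exponent vectors $i+j$ run over all of $I_{(\frac{n}{2}+1)d-n-2}$ and hence the corresponding additive characters of $(\Z/d)^{n+2}$ are distinct. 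One should also verify separately the two shapes of the rectangular matrix ($\#I_d\le\#I_{\frac{n}{2}d-n-2}$ versus the reverse), which by the symmetry $\pn_{i}\leftrightarrow\pn_{((d-2)-i_0,\dots)}$ of the Fermat periods are essentially dual, so one case suffices.

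The hard part will be step (3)–(4): controlling the rank of a genuinely rectangular, non-square matrix built from products of Gamma-values, since maximal rank is not preserved under arbitrary row/column scalings unless one knows that the relevant minors do not conspire to vanish. The cleanest route I can see is to avoid estimating minors directly and instead argue representation-theoretically: decompose $H^n_{\prim}(X^d_n)$ into the eigenspaces $V_a$ for characters $a\in(\Z/d)^{n+2}$ of the diagonal group (Shioda's decomposition), note that each $V_a$ is one-dimensional, that the Hodge filtration is a union of eigenspaces, and that the pairing $\int_\delta\omega_i$ sees only the single eigencomponent of $\delta$ matching the character of $\omega_i$; then the matrix $[\pn_{i+j}]$ becomes, after this diagonalization, a matrix whose $(i,j)$ entry is $\varepsilon(i+j)\cdot(\text{the }(i+j)\text{-eigencomponent of }\delta)$, and maximal rank is equivalent to: the set of eigencomponents of $\delta$ indexed by $I_{(\frac{n}{2}+1)d-n-2}$ avoids a proper subvariety cut out by the vanishing of the maximal minors of the $0/1$-incidence matrix of the "sum map" $I_{\frac{n}{2}d-n-2}\times I_d\to I_{(\frac{n}{2}+1)d-n-2}$. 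That incidence matrix is combinatorial and independent of $\delta$; the final task is to show it has maximal rank, which I expect to follow from an explicit unimodular-triangulation / lexicographic-ordering argument on lattice points of the hypersimplex, in the spirit of the proof of \eqref{12nov2015-2016} for linear cycles in \cite{GMCD-NL}. If that combinatorial rank statement is established, genericity of a Hodge cycle with nonzero eigencomponents everywhere on $I_{(\frac{n}{2}+1)d-n-2}$ — which exists because the $V_a$ are all nonzero and one may take $\delta$ with all eigencomponents nonzero — completes the proof. I would flag that making the combinatorial step unconditional for all $d\ge 2+\frac{4}{n}$ is precisely where a general (non-computer-assisted) argument is currently missing, consistent with the paper's remark that Conjecture \ref{29/1/16} is only verified in examples.
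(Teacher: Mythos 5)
You are attempting to prove a statement that the paper itself leaves as a conjecture: there is no proof of Conjecture \ref{29/1/16} in the paper for general $(n,d)$, only computer verifications for $X^d_2$ with $4\leq d\leq 8$ and $X^d_4$ with $3\leq d\leq 6$ (Theorem \ref{28jan2016cmsa}), obtained by evaluating the periods of explicit $\Z$-linear combinations of algebraic cycles via Theorem \ref{14out2016} and computing $\rank([\pn_{i+j}])$ directly. Your overall reduction --- exhibit one cycle of maximal rank and invoke Zariski-openness of the maximal-rank condition --- is exactly the paper's point of view, and your candidate (a $\Q$-linear combination of the linear cycles \eqref{6apr2017}, whose periods are explicit) is the one the paper computes with. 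Note that a single linear cycle cannot work: its rank is $\binom{\frac{n}{2}+d}{d}-(\frac{n}{2}+1)^2$ by \eqref{12nov2015-2016}, far below $\min\{\#I_d,\#I_{\frac{n}{2}d-n-2}\}$ in general (e.g.\ $19$ versus $426$ for $(n,d)=(4,6)$); you do allow combinations, so this is only a caveat.

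The genuine gaps are in your steps (3)--(4). First, after diagonalizing by the eigenspace decomposition, $[\pn_{i+j}]$ is a matrix whose $(i,j)$ entry depends only on $i+j$, i.e.\ a catalecticant-type matrix $[c_{i+j}]$. The generic rank of such a structured matrix, as the vector $(c_k)_{k\in I_{(\frac{n}{2}+1)d-n-2}}$ varies, is \emph{not} governed by the rank of the $0/1$ incidence matrix of the sum map $I_{\frac{n}{2}d-n-2}\times I_d\to I_{(\frac{n}{2}+1)d-n-2}$; it is the generic rank of an apolarity pairing, a problem of Fr\"oberg/Alexander--Hirschowitz type that is open in general and certainly not settled by a triangulation argument on the index sets. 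Second, and more seriously, you cannot take a Hodge cycle with nonzero eigencomponent at every middle-degree character: since $\delta$ is rational, its eigencomponents are permuted by $\Gal(\Q(\zeta_d)/\Q)$, so the component at a character $a$ vanishes unless the whole Galois orbit of $a$ stays in the middle Hodge piece, i.e.\ unless $a$ is a Hodge character in Shioda's sense. For many $(n,d)$ this forces a large set of entries $\pn_{i+j}$ to vanish for \emph{every} Hodge cycle, so the question is whether maximal rank is attained on the much smaller linear subspace of period vectors of actual Hodge cycles, not on a generic unconstrained vector --- which is precisely why the paper falls back on explicit computation. Your closing caveat is therefore accurate, but it concedes exactly the step that would constitute a proof; as written, the proposal is a plausible strategy with two identifiable false reductions, not a proof, and it neither matches nor improves on anything proved in the paper.
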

Let us discuss two extreme cases in the above conjecture. First, if $n=2$ then a general Hodge cycle has 
$\rank[\pn_{i+j}]=\bn{d-1}{3}$. Conjecture \ref{29/1/16} and Theorem 
\ref{main1.5} imply that there are infinite number of components of the Noether-Lefschetz locus of codimension $\bn{d-1}{3}$ 
passing through the Fermat point $0\in \T$, provided that there are infinite number of general Hodge cycles with different 
$\ker[\pn_{i+j}]$. This is compatible with the result in \cite{CHM88} that the components of the Noether-Lefschetz locus
with the maximal codimension are dense in $\T$, both in the Zariski and usual topology. Second, if 
$d>\frac{2(n+1)}{n-2}$ then the right hand side of \eqref{28jan2016} is $\# I_d$  which is also the dimension of the moduli of hypersurfaces of degree $d$
and dimension $n$. Therefore, Conjecture \ref{29/1/16} in this case implies that general Hodge cycles of the Fermat variety
cannot be deformed in the moduli space of hypersurfaces of degree $d$ and dimension $n$, in other words, 
any deformation of a general Hodge cycle of the Fermat variety to a nearby
hypersurface $X\subset \Pn {n+1}$ implies that $X$ is obtained from $X^d_n$ by a linear transformation of $\Pn {n+1}$.


For a moment assume that we have a collection of algebraic cycles $Z_i,\ \ i=1,2,\ldots,s$ such
that $[Z_i]$'s generate the $\Q$-vector space $\hodge_n(X^d_n,\Q)$ of Hodge cycles, and so, we know 
 the Hodge conjecture for $X^d_n$ is valid. This together  with Conjecture \ref{29/1/16} implies that a general
 algebraic cycle $\sum_{i=1}^s n_i Z_i,\ n_i\in\Z$ has a deformation space of the expected codimension which is the right
 hand side of \eqref{28jan2016}. In particular, for $d>\frac{2(n+1)}{n-2}$ such an algebraic cycle cannot be deformed at all if
 we consider the parameter space $\T$ parameterizing the homogeneous polynomials of the type  
\begin{equation}
\label{khodmidanad}
f:= x_0^{d}+x_1^{d}+\cdots+x_{n+1}^d- \sum_{j \in I_d}t_j x^j .\ \ \ 
\end{equation}
Any smooth hypersurface  in a Zariski open neighborhood of the Fermat point $0$ after a linear transformation of $\Pn {n+1}$, 
can be written as the zero set of some $f$ in this format. 

The equalities \eqref{12nov2015} and \eqref{28jan2016} can be checked computationally, as far as, we take particular examples 
of the degree $d$ and the dimension $n$, compute the periods $\pn_i$ and   the rank of $[\pn_{i+j}]$. Here, is the result
\begin{theo}
\label{28jan2016cmsa}
The Fermat surface $X^d_2,\ \ 4\leq d\leq 8$ has a general Hodge cycle.
The Fermat fourfold $X^d_4,\  3\leq d\leq 6$  has also a general Hodge cycle. 
 \end{theo}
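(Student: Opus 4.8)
The plan is to reduce the statement to a finite rank computation and then carry it out on a computer. The quantity at stake is $\rank[\pn_{i+j}]$, where the $\pn_i$ are the periods, defined in \eqref{artan}, of a suitably chosen Hodge cycle $\delta\in\hodge_n(X^d_n,\Q)$; by the definition of a general Hodge cycle in \eqref{28jan2016} it suffices to exhibit, for each pair $(n,d)$ in the list, one single Hodge cycle whose period matrix attains the maximal possible rank $\min\{\#I_d,\#I_{\frac n2 d-n-2}\}$. Since the locus in $\hodge_n(X^d_n,\Q)$ where the rank drops is Zariski closed, once one such cycle is found genericity is automatic. Thus the theorem is entirely a matter of producing one good witness per pair.

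First I would take $\delta=[Z]$ for $Z$ a linear cycle $\P^{\frac n2}\subset X^d_n$, say the one given by the first column of \eqref{CarolinePilar} (or, more flexibly, a $\Q$-linear combination of the various linear cycles $\P^{\frac n2}(a,b)$ in the Fermat variety, whose periods are known in closed form from the formula for periods of linear cycles recalled in \S\ref{07.03.2017-3}). For such $\delta$ the entries $\pn_i=\pn_i(\delta)$ are explicit algebraic numbers in an abelian extension of $\Q(\zeta_d)$: each is a ratio of products of values of the Gamma function at rational arguments, i.e.\ (up to an explicit root of unity and rational factor) a Jacobi-sum-type expression, and only finitely many of them are nonzero. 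Next I would assemble the matrix $[\pn_{i+j}]$ indexed by $i\in I_{\frac n2 d-n-2}$, $j\in I_d$ as in Definition \ref{05/04/2017}, which for the listed cases has moderate size (the largest being the $426\times 426$ matrix for $(n,d)=(4,6)$ noted in the text after Theorem \ref{main2}), and compute its rank in {\sc Singular} using the procedures of \texttt{foliation.lib}. To keep the computation exact and avoid floating-point issues I would either work in the cyclotomic field $\Q(\zeta_{\mathrm{lcm}})$ directly, or reduce modulo several large primes that split appropriately and invoke the fact that rank can only drop under specialization, so a large rank mod $p$ certifies the same rank in characteristic zero. Running this for $d=4,\dots,8$ with $n=2$ and $d=3,\dots,6$ with $n=4$ and checking in each case that the computed rank equals $\min\{\#I_d,\#I_{\frac n2 d-n-2}\}$ completes the proof.

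The main obstacle is purely computational rather than conceptual: the matrices, though finite, are large and their entries live in a cyclotomic field of potentially large conductor, so a naive exact rank computation may be slow or memory-hungry; the art is in choosing a convenient model for the period entries (clearing the Gamma-factor denominators, exploiting the $\mu_d^{n+2}$-action that organizes the nonzero periods into eigenspaces, and/or specializing mod $p$) so that the linear algebra becomes feasible. A secondary point to be careful about is bookkeeping: one must correctly set $\pn_i=0$ for $i\notin I_{(\frac n2+1)d-n-2}$, index the rows and columns by the right weight sets $I_{\frac n2 d-n-2}$ and $I_d$, and use the correct value of $k$ in \eqref{11.06.2017}; an off-by-one in any of these ruins the rank count. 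Once the matrix is correctly built, the rest is a mechanical verification, and the genericity of Hodge cycles for the stated Fermat surfaces and fourfolds follows.
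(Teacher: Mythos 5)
Your proposal matches the paper's proof: the paper establishes the theorem by computing the periods $\pn_i$ of an explicit Hodge cycle and then computing the rank of $[\pn_{i+j}]$ by computer, checking it equals $\min\{\#I_d,\#I_{\frac{n}{2}d-n-2}\}$ (the details are delegated to \cite{ho13}, Chapters 15, 16 and \S 18.8), which is exactly the witness-plus-exact-rank-computation strategy you describe, with genericity following from Zariski-closedness of the rank-drop locus. The one caution is that for $(n,d)=(2,6),(2,8),(4,6)$ the linear cycles do not span $\hodge_n(X^d_n,\Q)$, so a priori one should be prepared to search over the full lattice of Hodge cycles computed arithmetically \`a la Shioda rather than only over combinations of linear cycles --- though in these cases a general $\Z$-combination of linear cycles does in fact attain the maximal rank, cf.\ Theorem \ref{3feb2016}.
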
  
The upper bound on $d$ is just  due to the limitation of our computer and it might be improved if one uses a better computing machine.
Theorem \ref{28jan2016cmsa} for $n=4, d=6$ says the following: 
\begin{theo}
\label{isolatedlikeAC}
A general Hodge cycle $\delta_0\in H_{4}(X^6_{4},\Q)$ is not deformable, that is,  
the monodromy $\delta_t\in H_4(X_t,\Z),\ \ t\in(\T,0)$ 
of  $\delta_0$ to $X_t$ is no more a Hodge cycle. 
\end{theo}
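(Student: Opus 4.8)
The plan is to deduce this statement directly from Theorem \ref{28jan2016cmsa} together with Theorem \ref{main1.5}. First I would recall that, for $(n,d)=(4,6)$, Theorem \ref{28jan2016cmsa} asserts that the Fermat fourfold $X^6_4$ carries a general Hodge cycle; by definition this means the period matrix $[\pn_{i+j}]$ attains the maximal rank $\min\{\#I_d,\ \#I_{\frac{n}{2}d-n-2}\}$. Since $d=6>\frac{2(n+1)}{n-2}=5$ when $n=4$, we are in the regime where $\#I_d\leq \#I_{\frac{n}{2}d-n-2}$, so the maximal rank equals $\#I_d$, which is precisely the dimension of $\T$ (the number of columns of the square-or-wider matrix). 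Concretely here $[\pn_{i+j}]$ is the $426\times 426$ matrix described after Theorem \ref{main2}, and generality says it is invertible.

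Next I would invoke Theorem \ref{main1.5}: the kernel of $[\pn_{i+j}]$ is canonically the Zariski tangent space of the Hodge locus $V_{\delta_0}$ through $0\in\T$. For a general Hodge cycle this kernel is zero, hence $V_{\delta_0}$ has zero-dimensional Zariski tangent space at $0$, i.e.\ $V_{\delta_0}=\{0\}$ as an analytic scheme (set-theoretically it is the reduced point $0$). Therefore there is no nontrivial analytic arc $t\in(\T,0)$ along which the monodromy $\delta_t\in H_4(X_t,\Z)$ of $\delta_0$ remains Hodge: any such arc would be contained in $V_{\delta_0}$ and hence constant. This is exactly the assertion that $\delta_0$ is not deformable as a Hodge cycle in the family $\{X_t\}_{t\in\T}$.

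One small point I would address carefully is the passage from ``the period matrix is maximal rank for one general $\delta_0$'' to ``a general $\delta_0$'' in the sense of a Zariski-dense set: as noted in \S\ref{07.03.2017-3}, once a single general Hodge cycle exists, the locus in $\hodge_n(X^d_n,\Q)$ where the rank drops is a proper Zariski-closed subset, so genericity is automatic and the conclusion holds for all $\delta_0$ outside this subset. The only genuine input is the rank computation underlying Theorem \ref{28jan2016cmsa}, which is computer assisted; given that, the present theorem is a formal consequence and there is no real obstacle. If I wanted a self-contained argument I would instead have to reprove the $(4,6)$ case of Theorem \ref{28jan2016cmsa}, i.e.\ exhibit an explicit Hodge cycle (for instance a suitable combination of linear cycles, using the period formula recalled in \S\ref{07.03.2017-3}) and verify by computation that the associated $426\times 426$ matrix is nonsingular — that verification is the hard part, but it is precisely what is imported from the earlier theorem.
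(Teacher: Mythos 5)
Your proposal is correct and follows exactly the paper's own route: Theorem \ref{isolatedlikeAC} is deduced from the computer-verified existence of a general Hodge cycle in Theorem \ref{28jan2016cmsa}, the observation that for $(n,d)=(4,6)$ (where $d>\frac{2(n+1)}{n-2}$) the maximal rank equals $\#I_d=426$ so the square period matrix is invertible, and Theorem \ref{main1.5} identifying its kernel with the Zariski tangent space of $V_{\delta_0}$, forcing the Hodge locus to be the reduced point. Your remark on genericity being an open condition also matches the discussion in \S\ref{07.03.2017-3}.
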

Note that we are using the parameter space  in \eqref{khodmidanad}, otherwise, we should have stated that $X_t$ is obtained by a 
linear transformation of $X^6_{4}$. For the computations of periods of Hodge cycles and proof of 
Theorem \ref{28jan2016cmsa} and Theorem \ref{isolatedlikeAC} see \cite{ho13} Chapter 15 and 16. See also
\S 18.8 for details of the computer codes used for the proofs. The same codes for the Fermat cubic tenfold runs out of memory. In this
case one might use Theorem \ref{14out2016}.
\begin{theo}[\cite{Ran1980}, \cite{sh79}, \cite{AokiShioda1983}]
\label{24.07.2016}
Suppose that either $d$ is a prime number or $d=4$ or $d$ is relatively prime with $(n + 1)!$.
Then $\hodge_n(X^d_n,\Q)$ is generated by the homology classes of the linear cycles $\Pn {\frac{n}{2}}$, and in particular, 
 the Hodge conjecture for $X^d_n$ is true.
\end{theo}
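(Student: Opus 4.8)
The plan is to follow the eigenspace method of Ran, Shioda and Aoki--Shioda, which reduces the statement to a purely combinatorial assertion about characters. For an integer $x$ with $d\nmid x$ write $\langle x\rangle\in\{1,\dots,d-1\}$ for its residue modulo $d$, and let $\mathfrak A$ be the set of tuples $\mathbf a=(a_0,\dots,a_{n+1})$ with $1\le a_e\le d-1$ and $\sum_e a_e\equiv 0\bmod d$. The torus $\mu_d^{n+2}/\mu_d$ of diagonal projective automorphisms of $\Pn{n+1}$ preserving $X^d_n$ acts on primitive middle cohomology by
\[ H^n_\dR(X^d_n)_\prim\otimes\C=\bigoplus_{\mathbf a\in\mathfrak A}V(\mathbf a), \]
with each eigenspace $V(\mathbf a)$ one--dimensional, spanned by the residue form $\omega_i$ of \eqref{11.06.2017} with $i_e=a_e-1$. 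From Griffiths' pole--order description --- equivalently from \eqref{8.4.2017} --- one reads off that $V(\mathbf a)\subset H^{p,n-p}$ with $p=n+1-\frac1d\sum_e a_e$, so $V(\mathbf a)$ has Hodge type $(\frac n2,\frac n2)$ precisely when $\sum_e a_e=(\frac n2+1)d$. Since $\Gal(\Q(\zeta_d)/\Q)\cong(\Z/d\Z)^*$ acts on $H^n(X^d_n,\Q(\zeta_d))$ by $t\cdot V(\mathbf a)=V(t\mathbf a)$ (entries reduced via $\langle\ \rangle$), with $H^n(X^d_n,\Q)$ as its fixed subspace, a rational Hodge class is a $\Gal$--invariant vector lying in the $(\frac n2,\frac n2)$--part, hence is supported on a union of $\Gal$--orbits contained in the $(\frac n2,\frac n2)$--part; writing $\mathfrak B:=\{\mathbf a\in\mathfrak A\mid \sum_e\langle ta_e\rangle=(\frac n2+1)d\ \hbox{ for all }t\in(\Z/d\Z)^*\}$ we obtain $\hodge_n(X^d_n,\Q)_\prim\otimes\C=\bigoplus_{\mathbf a\in\mathfrak B}V(\mathbf a)$.

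Next I would compute the primitive classes of the linear cycles. After a coordinate permutation (which preserves $X^d_n$) every linear $\Pn{\frac n2}\subset X^d_n$ has the shape $\{x_{2k}-\eta_k x_{2k+1}=0:0\le k\le\frac n2\}$ with each $\eta_k$ a $2d$--th root of $-1$; the cycles in \eqref{CarolinePilar} are of this kind. A direct period computation --- the relevant periods being nonzero, essentially products of Gamma values --- shows that the projection to $H^n_\prim$ of the class of the linear cycle attached to a pairing $\pi$ of $\{0,\dots,n+1\}$ is supported exactly on those $V(\mathbf a)$ with $a_i+a_j\equiv 0\bmod d$ for every block $\{i,j\}$ of $\pi$. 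Because the torus characters on the distinct $V(\mathbf a)$ are pairwise distinct, the torus translates of a single such class span the sum of the $V(\mathbf a)$ over all $\mathbf a$ aligned with $\pi$; letting $\pi$ range over all pairings, the classes of all linear $\Pn{\frac n2}$ in $X^d_n$ together span $\bigoplus_{\mathbf a\in\mathfrak B_{\mathrm{dec}}}V(\mathbf a)$, where the \emph{decomposable} characters $\mathfrak B_{\mathrm{dec}}\subseteq\mathfrak B$ are those $\mathbf a\in\mathfrak A$ for which $\{0,\dots,n+1\}$ admits a partition into $\frac n2+1$ pairs $\{i,j\}$ with $a_i+a_j\equiv 0\bmod d$ (such $\mathbf a$ being automatically Hodge, as $\langle x\rangle+\langle -x\rangle=d$). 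Alternatively, and more robustly, one obtains the same conclusion from the Shioda--Katsura inductive structure: explicit algebraic correspondences express $X^d_n$ through joins of lower--dimensional Fermat varieties, and pushing the algebraic classes of the $0$--dimensional Fermat scheme $\{x_i^d+x_j^d=0\}$ (a union of $d$ reduced points) up the resulting tower yields precisely the classes of linear $\Pn{\frac n2}$'s and realizes all of $\bigoplus_{\mathbf a\in\mathfrak B_{\mathrm{dec}}}V(\mathbf a)$.

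At this stage the theorem is equivalent to the combinatorial identity $\mathfrak B=\mathfrak B_{\mathrm{dec}}$ --- every Hodge character is decomposable --- under each of the three hypotheses on $d$, and this is the step I expect to be the main obstacle. Removing a pair $\{i,j\}$ with $a_i+a_j\equiv 0$ from $\mathbf a\in\mathfrak B$ for $X^d_n$ leaves a Hodge character for $X^d_{n-2}$, so an induction on $n$ (the base $n=0$ being trivial) reduces the claim to: every Hodge character contains two indices $i\ne j$ with $a_i+a_j\equiv 0\bmod d$. For $d$ prime this is the counting argument of Ran and Shioda: the constancy of $t\mapsto\sum_e\langle ta_e\rangle$ on $(\Z/d\Z)^*$ is very rigid, and combined with $\sum_{t\in(\Z/d\Z)^*}\langle ta_e\rangle=\frac{d(d-1)}{2}$ and a pigeonhole / Stickelberger--type estimate it forces such a pair (\cite{Ran1980}, \cite{sh79}). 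For $d=4$ it is elementary: if $\mathbf a$ has $x$, $y$, $z$ entries equal to $1$, $2$, $3$, then $\sum_e a_e=(\frac n2+1)\cdot 4$ forces $x=z$ and then $\sum_e a_e\equiv 0\bmod 4$ forces $y$ even, so $\mathbf a$ is the disjoint union of $x$ pairs $\{1,3\}$ and $\frac y2$ pairs $\{2,2\}$. For $\gcd(d,(n+1)!)=1$ one invokes the theorem of Aoki--Shioda \cite{AokiShioda1983}: an indecomposable Hodge character with $n+2$ components forces $d$ to have a prime factor $\le n+1$, which the coprimality hypothesis excludes.

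Finally I would record that the non--primitive part of $H_n(X^d_n,\Q)$ is the line spanned by $[\plc]=[\Pn{\frac n2+1}\cap X^d_n]$, which equals $\frac1d$ times the class of a linear $\Pn{\frac n2}$ plus a primitive class already covered above. Combining this with the previous steps, $\hodge_n(X^d_n,\Q)$ is generated by the classes of the linear cycles $\Pn{\frac n2}$; in particular every Hodge cycle of $X^d_n$ is algebraic, which is the Hodge conjecture for $X^d_n$.
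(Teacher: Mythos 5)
This theorem is quoted in the paper without proof --- it is attributed entirely to \cite{Ran1980}, \cite{sh79} and \cite{AokiShioda1983} --- and your outline is a faithful reconstruction of precisely those authors' argument: the character decomposition of primitive cohomology, the identification of the Hodge characters $\mathfrak B$ and of the span of the linear cycles with the decomposable characters, and the reduction to the combinatorial identity $\mathfrak B=\mathfrak B_{\mathrm{dec}}$, whose hard cases ($d$ prime, $\gcd(d,(n+1)!)=1$) you rightly leave to the same references. The only slip is in your last paragraph: since $\deg\plc=d$ while $\deg\Pn{\frac n2}=1$, it is $[\Pn{\frac n2}]$ that equals $\frac1d[\plc]$ plus a primitive class, not the reverse (equivalently, for a suitable $\Pn{\frac n2+1}$ the section $\plc$ is literally a union of $d$ linear cycles), which does not affect the conclusion.
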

This theorem is the outcome of many efforts in order to prove the Hodge conjecture for $X^d_n$ using linear projective cycles. The cases  
$(n,d)=(2,6), (4,6)$ are not covered by this theorem because such algebraic cycles are not enough in these cases. 
N. Aoki in \cite{aoki1987}, 
inspired by his work with Shioda \cite{AokiShioda1983}, has introduced more algebraic cycles 
and in this way he has been able to verify the Hodge conjecture for many other Fermat varieties, and in particular for 
the sextic Fermat fourfold. In this case we can determine the homology classes of linear cycles $\Pn {\frac{n}{2}}$ explicitly, 
\cite{ho13} Section 16.7. 
This together with Theorem \ref{isolatedlikeAC} gives us:
 \begin{theo}
 \label{3feb2016}
 A general $\Z$-linear combination of projective linear cycles $\Pn {2}$  is not
 deformable in the moduli space of degree $6$ hypersurfaces in $\Pn {5}$. 
 \end{theo}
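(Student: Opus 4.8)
The plan is to combine Theorem \ref{isolatedlikeAC} with the explicit description of the homology classes of linear cycles $\Pn 2 \subset X^6_4$. First, I would recall from \cite{ho13}, Section 16.7, that the $\Q$-span of the classes $[\Pn 2]$ of all linear cycles inside the Fermat sextic fourfold is a \emph{full-dimensional} $\Q$-subspace of $\hodge_4(X^6_4,\Q)$; more precisely, by the work of Aoki--Shioda and Aoki \cite{AokiShioda1983, aoki1987} the Hodge conjecture holds for $X^6_4$ and, together with the explicit cycle classes of \cite{ho13}, the linear cycles $\Pn 2$ already generate all of $\hodge_4(X^6_4,\Q)$ except for the part that requires Aoki's extra cycles — but for the purposes of this statement what matters is that the linear cycles span a subspace of positive codimension inside which a Zariski-generic $\Z$-combination is a general Hodge cycle in the sense of \eqref{28jan2016}. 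The key point is that ``general'' is a Zariski-open condition on $\hodge_4(X^6_4,\Q)$ (the rank of $[\pn_{i+j}]$ drops on a proper closed subvariety), so its intersection with the span of $\{[\Pn 2]\}$ is Zariski-open and, by Theorem \ref{28jan2016cmsa} for $(n,d)=(4,6)$, \emph{nonempty}; hence a Zariski-generic $\Z$-linear combination $\delta_0 = \sum n_i [\Pn 2_i]$ lands in it.

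The steps, in order: (1) invoke Theorem \ref{28jan2016cmsa} to know $X^6_4$ possesses a general Hodge cycle, so the locus of non-general Hodge cycles is a \emph{proper} Zariski-closed subset of $\hodge_4(X^6_4,\Q)$; (2) use \cite{ho13}, Section 16.7, to exhibit enough explicit linear cycle classes $[\Pn 2_i]$ so that their $\Z$-span is not contained in that proper closed subset — this is the only place one needs the explicit Aoki/Shioda cycle-class computations; (3) conclude that a Zariski-generic integral combination $\delta_0$ of the $[\Pn 2_i]$ is a general Hodge cycle; (4) apply Theorem \ref{isolatedlikeAC} (equivalently Theorem \ref{main1.5} plus \eqref{28jan2016}): for a general Hodge cycle the matrix $[\pn_{i+j}]$ has full rank $\#I_d$, so its kernel is $0$, hence the Zariski tangent space of the Hodge locus $V_{\delta_0}$ at $0\in\T$ is $0$; (5) since the monodromy $\delta_t$ of $\delta_0$ stays Hodge exactly on $V_{\delta_0}$, and $V_{\delta_0}=\{0\}$ in the parameter space \eqref{khodmidanad}, the cycle cannot be deformed, which is precisely the assertion (with the standard caveat, already in the text, that ``not deformable'' is understood relative to the affine chart \eqref{khodmidanad}, i.e.\ up to linear automorphisms of $\Pn 5$).

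The main obstacle — and it is essentially bookkeeping rather than a conceptual difficulty — is step (2): one must be sure that the explicit linear cycle classes actually escape the non-general locus, i.e.\ that some integer combination of the $[\Pn 2_i]$ realizes (or approximates, in the Zariski sense) the general Hodge cycle whose existence is guaranteed abstractly by Theorem \ref{28jan2016cmsa}. In principle the general Hodge cycle produced by the computation in \cite{ho13}, Chapters 15--16, could a priori fail to lie in the span of the linear cycles; however, since that span, enlarged by Aoki's cycles, is all of $\hodge_4(X^6_4,\Q)$, and since one can check computationally that the rank of $[\pn_{i+j}]$ is already maximal when restricted to a generic combination of just the linear classes, the genericity argument goes through. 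All the transcendental input — that the periods $\pn_i$ of linear cycles are the explicit numbers giving maximal rank — is supplied by the period formula for linear cycles recalled in \S\ref{07.03.2017-3} and by Theorem \ref{28jan2016cmsa}, so no new analysis is needed beyond assembling these ingredients.
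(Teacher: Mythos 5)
Your proposal follows essentially the same route as the paper: the text deduces Theorem \ref{3feb2016} precisely by combining Theorem \ref{isolatedlikeAC} with the explicit homology classes of the linear cycles from \cite{ho13} Section 16.7 (and their periods via Theorem \ref{14out2016}), the only real content being the computer-assisted check that a generic $\Z$-combination of linear classes already achieves the maximal rank in \eqref{28jan2016} --- exactly the step you isolate as the ``main obstacle.'' One small slip: you first call the span of the $[\Pn 2]$'s ``full-dimensional'' and then (correctly) a subspace of positive codimension in $\hodge_4(X^6_4,\Q)$; only the latter is right, since for $(n,d)=(4,6)$ linear cycles do not generate all Hodge cycles, which is exactly why the genericity of Theorem \ref{28jan2016cmsa} alone does not suffice and the explicit rank computation on the linear-cycle span is needed.
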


We propose two different methods in order to compute integrals \eqref{26nov2015}. 
The first method is purely
topological and it is based on the computation of the intersection numbers of algebraic cycles with 
vanishing cycles. In the case of the Fermat variety, we are able to write down vanishing 
cycles explicitly, however, they 
are singular, even though they are homeomorphic to spheres, and many interesting algebraic cycles
of the Fermat variety intersect them in their singular points. This makes the computation of intersection numbers harder.
The second method is purely algebraic and it is a generalization of Carlson-Griffiths 
computations in \cite{CarlsonGriffiths1980}.  
One has to compute the restriction of differential $n$-forms in $X$
to the top cohomology of $Z$, and then,  one has to compute the so-called trace map. The second method is 
the main topic of the Ph.D. thesis of R. Villaflor, see \cite{RobertoThesis}.  For 
$a=(a_1,a_3,\ldots,a_{n+1})\in\{0,1,2,\ldots,d-1\}^{\frac{n}{2}+1}$ and a permutation $b=(b_0,b_1,\ldots,b_{n+1})$ of $\{0,1,2,\ldots,n+1\}$ 
let 
\begin{equation}
\label{6apr2017}
 \licy{\frac{n}{2}}_{a,b}:  
\left\{
 \begin{array}{l}
 x_{b_0}-\zeta_{2d}^{1+2a_1}x_{b_1}=0,\\
 x_{b_2}-\zeta_{2d}^{1+2a_3} x_{b_3}=0,\\
 x_{b_4}-\zeta_{2d}^{1+2a_5} x_{b_5}=0,\\
 \cdots \\
 x_{b_n}-\zeta_{2d}^{1+2a_{n+1}} x_{b_{n+1}}=0.
 \end{array}
 \right. 
\end{equation}
We call it a linear cycle inside the Fermat variety. Hopefully, this $a,b$ notation will not be confused with the integers 
$a,b$ in Conjecture \ref{CanISayFinally?}. In order to avoid repetitions, we may assume that 
$b_0=0$ and for $i$ an even number $b_i$ is the smallest number in 
$\{0,1,\ldots,n+1\}\backslash\{b_0,b_1,b_2,\ldots, b_{i-1}\}$. In this way the number of linear cycles  is 
\begin{equation}
\label{28.06.2017}
(n+1)\cdot (n-1)\cdots 3\cdot 1\cdot d^{\frac{n}{2}+1}.
\end{equation}
For linear cycles 
the computation of periods is a direct consequence of a
theorem of Carlson and Griffiths in \cite{CarlsonGriffiths1980}:
\begin{theo}
\label{14out2016}
For  $i\in I_{(\frac{n}{2}+1)d-n-2}$ we have
$$
\frac{1}{(2\pi \sqrt{-1})^{\frac{n}{2}}}\mathlarger{\mathlarger{\int}}_{\licy{\frac{n}{2}}_{a,b}}\omega_i= 
$$
$$
\left\{
	\begin{array}{ll}
		   \frac{{\rm sign}(b)\cdot(-1 )^
		   {\frac{n}{2}}}{d^{\frac{n}{2}+1}\cdot \frac{n}{2}!} 
		   {\zeta_{2d}}^{\epsilon}  & \mbox{if } \ \ \  
		   \ i_{b_{2e-2}}+i_{b_{2e-1}}=d-2, \ \ \ \forall e=1,...,\frac{n}{2}+1, \\
		0 & \mbox{otherwise. } 
	\end{array}
\right.
$$
where $\zeta_{2d}$ is the $2d$-th primitive root of unity and 
$$
\epsilon=\sum_{e=0}^\frac{n}{2} (i_{b_{2e}}+1)\cdot (1+2a_{2e+1}).
$$
\end{theo}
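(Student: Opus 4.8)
The plan is to reduce the period integral over the linear cycle $\licy{\frac{n}{2}}_{a,b}$ to an explicit residue computation on $Z:=\licy{\frac{n}{2}}_{a,b}\cong\Pn{\frac{n}{2}}$, following the method of Carlson--Griffiths \cite{CarlsonGriffiths1980}. First I would bring the cycle into a normal form: after the permutation $b$ of the coordinates and a diagonal change of variables $x_{b_{2e}}\mapsto x_{b_{2e}}$, $x_{b_{2e-1}}\mapsto \zeta_{2d}^{1+2a_{2e-1}}x_{b_{2e-1}}$ (which is an automorphism of $\Pn{n+1}$ preserving the shape of the Fermat equation up to the signs $\zeta_{2d}^{(1+2a)d}=-1$), the cycle $Z$ becomes the standard linear space $x_{b_0}=x_{b_1},\ x_{b_2}=x_{b_3},\ \dots$, sitting inside the (twisted) Fermat hypersurface. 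The Jacobian of this change of variables, together with the sign of the permutation $b$, will contribute the prefactor ${\rm sign}(b)$ and part of the $\zeta_{2d}^{\epsilon}$ factor; keeping careful track of these elementary but bookkeeping-heavy factors is where most of the labor lies.

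Next I would compute $\frac{1}{(2\pi\sqrt{-1})^{n/2}}\int_{[Z]}\omega_i$ by the adjunction/residue recipe: $\omega_i=\Resi\!\left(\frac{x^i\,\Omega}{f^{k}}\right)$ with $\Omega=\sum_j(-1)^jx_j\,dx_0\wedge\cdots\widehat{dx_j}\cdots\wedge dx_{n+1}$ and $k=\frac{n}{2}+1$ in the top Hodge piece, and the integral of this residue over the complete-intersection cycle $Z=\{f_1=\cdots=f_{\frac{n}{2}+1}=0\}$ (here $f_e=x_{b_{2e-2}}-\zeta_{2d}^{1+2a_{2e-1}}x_{b_{2e-1}}$, all of degree one) is, up to the universal constant $\frac{1}{(2\pi\sqrt{-1})^{n/2}}$, a sum of iterated Poincaré residues which collapses to an algebraic residue symbol on $\Pn{\frac{n}{2}}$. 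Using the factorization of the Fermat polynomial into the product of the $f_e$'s and their ``conjugates'' $g_e=x_{b_{2e-2}}+\zeta_{2d}^{1+2a_{2e-1}}x_{b_{2e-1}}+(\text{other terms})$ — more precisely $x_{b_{2e-2}}^d+x_{b_{2e-1}}^d=\prod_{\ell}(x_{b_{2e-2}}-\zeta_{2d}^{2\ell+1}x_{b_{2e-1}})$, so $f$ restricted transversally to $Z$ has a controlled vanishing order — the multidimensional residue reduces to evaluating $x^i$ on $Z$ against the product of the derivatives $\partial f/\partial(\text{transverse directions})$ at the cycle. After restricting to $Z$ (where $x_{b_{2e-1}}=\zeta_{2d}^{-(1+2a_{2e-1})}x_{b_{2e-2}}$), the monomial $x^i$ becomes a power of the ambient coordinates on $\Pn{\frac{n}{2}}$ times a root-of-unity factor $\zeta_{2d}^{\sum_e(i_{b_{2e-1}})(1+2a_{2e-1})}$, which after reindexing is exactly $\zeta_{2d}^{\epsilon}$.

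The remaining point is the vanishing/non-vanishing dichotomy and the normalizing constant. The integral over $\Pn{\frac{n}{2}}$ of a residue of a rational $\frac{n}{2}$-form is nonzero precisely when, after restriction to $Z$, the numerator monomial matches the ``top'' monomial dictated by the product of transverse Jacobian factors; a degree count on each hyperbola $x_{b_{2e-2}}^d+x_{b_{2e-1}}^d=0$ forces $i_{b_{2e-2}}+i_{b_{2e-1}}=d-2$ for every $e$ — otherwise the residue integrand is either holomorphic (hence integrates to zero) or has no residue along $Z$ — giving the case distinction in the statement. When these equalities hold, the surviving integral is a standard one: $\frac{1}{(2\pi\sqrt{-1})^{n/2}}\int_{\Pn{n/2}}\Resi(\cdots)=\frac{(-1)^{n/2}}{(n/2)!}$ times the product of the $\frac{n}{2}+1$ linear-form Jacobian factors $\frac{1}{d}$ coming from $\partial(x^d_{b_{2e-2}}+x^d_{b_{2e-1}})$ evaluated on $Z$, which assembles to $\frac{{\rm sign}(b)(-1)^{n/2}}{d^{n/2+1}(n/2)!}\zeta_{2d}^{\epsilon}$. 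The main obstacle I anticipate is not any single deep step but the consistent tracking of all the roots of unity and signs — the permutation sign, the Jacobian of the diagonalization, the factor from each factorized Fermat pair, and the orientation of the residue — so that they combine into the clean exponent $\epsilon=\sum_{e=0}^{n/2}(i_{b_{2e}}+1)(1+2a_{2e+1})$; I would do this by first treating the case $b=\mathrm{id}$, $a=0$ carefully and then propagating the general $a,b$ by functoriality of the residue under the coordinate automorphism.
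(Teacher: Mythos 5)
Your plan is essentially the paper's own route: the text does not prove Theorem \ref{14out2016} here but derives it as ``a direct consequence of a theorem of Carlson and Griffiths'' via exactly the method you describe — restricting $\omega_i$ to the complete-intersection cycle $Z\cong\Pn{\frac{n}{2}}$, using the factorization $x^d+y^d=\prod_\ell(x-\zeta_{2d}^{1+2\ell}y)$, and evaluating the trace/residue map — with the full bookkeeping carried out in \cite{Roberto} and \cite{RobertoThesis}. Your sketch correctly identifies all the ingredients (the pairwise degree condition $i_{b_{2e-2}}+i_{b_{2e-1}}=d-2$, the constant $\frac{(-1)^{n/2}}{d^{n/2+1}(n/2)!}$, and the origin of ${\rm sign}(b)$ and $\zeta_{2d}^{\epsilon}$), leaving only the sign-and-root-of-unity bookkeeping you explicitly flag as deferred.
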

This is done in \cite{Roberto} and it is the main ingredient of Theorem \ref{27.03.17}
Theorem \ref{main1} follows from the verification of the equality
\eqref{12nov2015-2016} for periods of linear cycles computed in Theorem \ref{14out2016}. 
This verification turns out to be an elementary problem. Using Theorem \ref{14out2016} we can make Theorem \ref{3feb2016} more concrete.

\def\codnum{{\sf C}}
\section{Sum of two linear cycles}
\label{eid1396}
Let $\P^{\frac{n}{2}}, \check\P^{\frac{n}{2}}$ be two linear algebraic cycles in the Fermat variety.  
We define
 \begin{equation}
 \label{6/12/2016}
 \mydim^{d}_n(m):= \rank\left(\left[\pn_{i+j}\left( [\P^{\frac{n}{2}}]+[\check\P^{\frac{n}{2}}]\right)\right]\right),
 \hbox{ where }  \P^{\frac{n}{2}}\cap \check \P^{\frac{n}{2}}=\P ^m.
 \end{equation} 
 \begin{conj}
 \label{colombianos2017} 
The number $\mydim^{d}_n(m)$ 
depends only on $d,n,m$ and not on the choice of $\P^{\frac{n}{2}}, \check\P^{\frac{n}{2}}$.
\end{conj}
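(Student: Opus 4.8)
The plan is to reduce the claim to the explicit period formula of Theorem \ref{14out2016}, and to track how the matrix $\left[\pn_{i+j}\left([\P^{\frac{n}{2}}]+[\check\P^{\frac{n}{2}}]\right)\right]$ changes under a transformation moving one pair of linear cycles with $\P^{\frac{n}{2}}\cap\check\P^{\frac{n}{2}}=\P^m$ to another such pair. First I would normalize the situation: by applying a permutation of the coordinates $x_0,\ldots,x_{n+1}$ and multiplication of coordinates by appropriate powers of $\zeta_{2d}$ (the symmetries of the Fermat variety $X^d_n$), one can assume $\P^{\frac{n}{2}}$ is the \emph{standard} linear cycle appearing in \eqref{CarolinePilar}, i.e.\ $\licy{\frac{n}{2}}_{a,b}$ with $a=0$ and $b=\id$. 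The remaining freedom — the subgroup of Fermat symmetries fixing this standard $\P^{\frac{n}{2}}$ — then acts on the set of $\check\P^{\frac{n}{2}}$ with prescribed intersection $\P^m$, and the key point is that this action is transitive on isomorphism types of configurations, or at least that any two configurations are related by a symmetry of $X^d_n$ together with a relabeling of the index sets $I_d$ and $I_{\frac{n}{2}d-n-2}$ that amounts to a simultaneous row-and-column permutation (and rescaling) of $\left[\pn_{i+j}\right]$, hence preserves the rank.

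Concretely, I would proceed as follows. Step 1: using Theorem \ref{14out2016}, write down $\pn_i\left([\P^{\frac{n}{2}}]\right)$ and $\pn_i\left([\check\P^{\frac{n}{2}}]\right)$ for the two cycles $\licy{\frac{n}{2}}_{a,b}$ and $\licy{\frac{n}{2}}_{a',b'}$ encoding a configuration of type $m$; both are supported on the "diagonal" sets of indices $i$ with $i_{b_{2e-2}}+i_{b_{2e-1}}=d-2$, resp.\ the primed version, and are monomials in $\zeta_{2d}$ times an explicit sign. Step 2: the condition $\P^{\frac{n}{2}}\cap\check\P^{\frac{n}{2}}=\P^m$ pins down, up to the Fermat symmetry group, the combinatorial relationship between $(a,b)$ and $(a',b')$: the first $m+1$ of the defining equations can be taken identical, and the remaining ones differ by the change $\zeta_{2d}\mapsto\zeta_{2d}^{3}$ as in \eqref{CarolinePilar}. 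Step 3: show that a different configuration of the same type $m$ is carried to the standard one by an element $g$ of the symmetry group $\mu_{2d}^{n+2}\rtimes S_{n+2}$ acting on $\Pn{n+1}$ and preserving $X^d_n$; such $g$ acts on $H^n_\dR(X^d_n)$ permuting and rescaling the basis $\{\omega_i\}$ by rules that are linear and invertible on the index sets, so that $g^*$ sends the matrix $\left[\pn_{i+j}([\P]+[\check\P])\right]$ for one configuration to a matrix obtained from the other by permuting rows and columns and multiplying them by roots of unity. Rank is invariant under all of these operations, giving $\mydim^d_n(m)$ well-defined.

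The main obstacle I anticipate is Step 3, i.e.\ verifying that \emph{every} pair of linear cycles with a given intersection dimension $m$ is equivalent under the Fermat symmetry group to the standard pair, and not merely to a finite list of "combinatorial types" on which the rank might a priori differ. In the paper's own notation \eqref{6apr2017} a linear cycle is $\licy{\frac{n}{2}}_{a,b}$ with $a\in\{0,\ldots,d-1\}^{\frac{n}{2}+1}$ and $b$ a permutation; two such cycles and their intersection pattern give a fair amount of discrete data, and the symmetry group, while large, need not obviously act transitively on it once $m$ is fixed — one must check that the stabilizer of the first cycle still moves the second cycle to the normal form of \eqref{CarolinePilar}. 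If transitivity genuinely fails, the fallback is weaker: partition the configurations of type $m$ into orbits, observe that within each orbit the rank is constant by the argument above, and then argue the rank is the same across orbits either by a direct comparison of the (fully explicit, by Theorem \ref{14out2016}) matrices, or by noting that a generic deformation argument (semicontinuity of rank plus the irreducibility of the incidence variety parameterizing such pairs) forces equality. In either form, all the genuine content is the bookkeeping of how the explicit period monomials behave under coordinate symmetries, and the proof reduces to that.
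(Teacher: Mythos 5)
You are attempting to prove a statement that the paper itself leaves as Conjecture \ref{colombianos2017}: the author only verifies it by computer for $(n,d)=(2,d)$ with $5\leq d\leq 8$, $(4,4)$ and $(6,3)$, and then explicitly sidesteps it by fixing one particular pair of linear cycles for the remainder of the article. Your Step 3 is exactly where the difficulty lies, and the gap is real, not just a technicality to be checked. The automorphism group $\mu_d^{n+2}\rtimes S_{n+2}$ of the Fermat variety permutes the coordinate pairings underlying the linear cycles \eqref{6apr2017}, so it can never move a pair of cycles built on the \emph{same} pairing of $\{0,\dots,n+1\}$ (sharing $m+1$ defining equations, as in \eqref{CarolinePilar}) to a pair built on \emph{different} pairings that happen to intersect in a $\P^m$ as well. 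Both kinds of configuration occur already for $(n,m)=(2,0)$: the lines $x_0=\zeta_{2d}x_1,\ x_2=\zeta_{2d}x_3$ and $x_0=\zeta_{2d}x_2,\ x_1=\zeta_{2d}x_3$ lie on $X^d_2$ and meet in a single point, yet no Fermat symmetry carries this pair to a pair of lines sharing a defining equation. So the orbit analysis genuinely produces several orbits for a fixed $m$, and the whole content of the statement is the comparison of ranks \emph{across} orbits.

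Neither of your fallbacks closes this. The set of linear cycles inside the fixed Fermat variety is finite --- there are $(n+1)(n-1)\cdots 3\cdot 1\cdot d^{\frac{n}{2}+1}$ of them by \eqref{28.06.2017} --- so the ``incidence variety'' of pairs with intersection $\P^m$ is a finite set of points; there is nothing to deform along, and semicontinuity of rank would in any case only give an inequality in one direction, not the equality you need. The other fallback, a direct comparison of the explicit matrices orbit by orbit, is precisely the unproved content of the conjecture, and is what the paper carries out by machine for a handful of $(n,d)$. Your Steps 1 and 2 are fine: the invariance of $\rank([\pn_{i+j}])$ under the Fermat symmetries, realized as simultaneous row and column permutations and rescalings by roots of unity, is correct and is implicitly used in the paper to normalize the first cycle to $a=(0,\dots,0)$, $b=(0,1,\dots,n+1)$. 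But this reduces the problem to finitely many orbits rather than to one, and the remaining cross-orbit equality is open.
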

\href{http://w3.impa.br/~hossein/WikiHossein/files/Singular%20Codes/2017-06-CodimensionIsIndependent.txt}
{We have verified the conjecture for $(n,d)$ in:}\footnote{The procedure {\tt ndm} is used for this purpose.}
$$
(2,d),\ \ 5\leq d\leq 8,\  (4,4), (6,3).  
$$
We can use the automorphism group $\group^d_n$ of the Fermat variety and we can assume that 
$\P^{\frac{n}{2}}$ is \eqref{6apr2017} 
with $a=(0,0,\cdots,0)$ and $b=(0,1,\cdots,n+1)$. In order to avoid 
Conjecture \ref{colombianos2017} we will fix our choice of linear cycles:
\begin{eqnarray*}
\P^\frac{n}{2}&=&\P^\frac{n}{2}_{a,b} \hbox{ with } a=(0,0,\cdots,0),\ b=(0,1,\cdots,n+1)\\
\check \P^\frac{n}{2}&=&\P^\frac{n}{2}_{a,b} \hbox{ with } a=(\underbrace{0,0,\cdots,0}_{m+1 \hbox{ times}},
1,1,\cdots,1),\ b=(0,1,\cdots,n+1)
\end{eqnarray*}
which are those used in Introduction. 
For examples of $\mydim^d_n(m)$ see Table \ref{12mar2017}.   
For a sequence of natural numbers $\underline a=(a_1,\ldots,a_{s})$ let
us define
\begin{equation}
 \label{1julio2016-bonn}
 \codnum_{\underline{a}}=
 \bn{n+1+d}{n+1}-
 \sum_{k=1}^{s}(-1)^{k-1} \sum_{a_{i_1}+a_{i_2}+\cdots+a_{i_k}\leq d }\bn{n+1+d-a_{i_1}-a_{i_2}-\cdots-a_{i_k}}{n+1},
\end{equation}
where the second sum runs through all $k$ elements (without order) of $a_i,\ \ i=1,2,\ldots,s$. 
By our convention, the projective space $\P^ {-1}$ means the empty set.
By abuse of notation we write 
$$
a^b:=\underbrace{a,a,\cdots,a}_{\hbox{$b$ times}}.
$$
Hopefully, there will be no confusion with the exponential $a^b$.
\begin{theo}
\label{chilenos2017}
 Let $\P^{\frac{n}{2}}, \check\P^{\frac{n}{2}}$ be two linear algebraic cycles in a smooth hypersurface of dimension $n$ and degree $d$ and  with the 
 intersection $\Pn m$. We have 
 \begin{equation}
 \label{6dec2016}
\intdim^d_n(m):=
  \codim(V_{\P^{\frac{n}{2}}}\cap V_{\check\P^{\frac{n}{2}}})=
  2\codnum_{1^{\frac{n}{2}+1},(d-1)^{\frac{n}{2}+1}}-\codnum_{1^{n-m+1}, (d-1)^{m+1}}.
 \end{equation}
In particular,  if $\P^{\frac{n}{2}}$ does not intersect $\check\P^{\frac{n}{2}}$ then
$V_{\P^{\frac{n}{2}}}$ intersects $V_{\check\P^{\frac{n}{2}}}$  transversely. 
\end{theo}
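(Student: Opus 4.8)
The plan is to compute the dimension of $V_{\P^{\frac{n}{2}}}\cap V_{\check\P^{\frac{n}{2}}}$ at the Fermat point $0$ by stratifying it according to how much the two deformed linear cycles meet, estimating each stratum through a dimension count on an incidence variety of pairs of linear spaces lying inside a hypersurface, and checking that the combinatorics reproduce the right hand side of \eqref{6dec2016}. Write $L_1:=\P^{\frac{n}{2}}$, $L_2:=\check\P^{\frac{n}{2}}$, $p:=\frac{n}{2}$, and keep the standing hypothesis $d\geq 2+\frac{4}{n}$ so that Theorem \ref{main1} applies.

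By Theorem \ref{main1} and the discussion after Theorem \ref{main2}, $V_{\P^{\frac{n}{2}}}=V_{[\P^{\frac{n}{2}}]}$ is smooth of codimension $c:=\codnum_{1^{\frac{n}{2}+1},(d-1)^{\frac{n}{2}+1}}$, it is the germ at $0$ of the locus of hypersurfaces containing a $\frac{n}{2}$-plane deforming $L_1$, and that plane is \emph{unique}: for $X$ near $X^d_n$ in $V_{\P^{\frac{n}{2}}}$ write $L_1'(X)$ for it, and likewise $L_2'(X)$. From \eqref{1julio2016-bonn}, the elementary identity $\sum_i(-1)^i\binom{a}{i}\binom{N-i}{b}=\binom{N-a}{b-a}$ gives $c=\binom{p+d}{d}-(p+1)^2$ — this is \eqref{12nov2015-2016} — and, more generally, $\codnum_{1^{n-\ell+1},(d-1)^{\ell+1}}=\binom{\ell+d}{d}-(\ell+1)^2$ for every integer $\ell$. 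Near $0$ one has $V_{\P^{\frac{n}{2}}}\cap V_{\check\P^{\frac{n}{2}}}=\{X:\ X\supset L_1'(X)\cup L_2'(X)\}$; since $\dim\big(L_1'(X)\cap L_2'(X)\big)$ is upper semicontinuous in $X$ and equals $m$ at $X^d_n$, this set is the disjoint union of the locally closed strata $S_\ell:=\{X:\dim(L_1'(X)\cap L_2'(X))=\ell\}$ with $-1\leq\ell\leq m$.

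For each such $\ell$, introduce near $(X^d_n,L_1,L_2)$ the incidence variety $W_\ell$ of triples $(X,M_1,M_2)$ with $M_1,M_2$ linear $\frac{n}{2}$-planes contained in $X$ and $\dim(M_1\cap M_2)=\ell$. Projecting to $(M_1,M_2)$ fibers $W_\ell$ over the smooth locus $U_\ell$ of pairs of $\frac{n}{2}$-planes meeting along a $\P^\ell$, of dimension $\dim U_\ell=2(p+1)^2-(\ell+1)^2$ (choose the common $\P^\ell$ inside $M_1$, then $M_2\supset\P^\ell$ inside the quotient $\P^{n-\ell}$), the fibre being the linear system of hypersurfaces through $M_1\cup M_2$, of codimension $h^0(M_1\cup M_2,\mathcal O(d))=2\binom{p+d}{d}-\binom{\ell+d}{d}$ in $\C[x]_d$; here one uses that a reduced union of two linear spaces meeting along a linear space is arithmetically Cohen--Macaulay, so that $H^0(\P^{n+1},\mathcal O(d))$ surjects onto $H^0(M_1\cup M_2,\mathcal O(d))$, which is then computed by Mayer--Vietoris. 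Hence $\codim_\T W_\ell=h^0(M_1\cup M_2,\mathcal O(d))-\dim U_\ell=2c-g(\ell)$ with $g(\ell):=\binom{\ell+d}{d}-(\ell+1)^2$. Since $X\mapsto(X,L_1'(X),L_2'(X))$ embeds $S_\ell$ into $W_\ell$, every component of $V_{\P^{\frac{n}{2}}}\cap V_{\check\P^{\frac{n}{2}}}$ has codimension $\geq 2c-g(\ell)$ for the value $\ell\leq m$ attained generically on it; and an elementary computation (using $d\geq 3$, which holds throughout) gives $g(\ell+1)-g(\ell)=\binom{\ell+d}{d-1}-(2\ell+3)\geq 0$, so $g$ is non-decreasing and $g(\ell)\leq g(m)$ for $\ell\leq m$, whence $\codim_\T\big(V_{\P^{\frac{n}{2}}}\cap V_{\check\P^{\frac{n}{2}}}\big)\geq 2c-g(m)$. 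For the reverse inequality, $(X^d_n,L_1,L_2)\in W_m$, and the local branch of $W_m$ there is irreducible of dimension $\dim\T-(2c-g(m))$ and maps to $\T$ with generically finite fibres (there are only finitely many planes near $L_1$, near $L_2$), its image lying inside $V_{\P^{\frac{n}{2}}}\cap V_{\check\P^{\frac{n}{2}}}$; so $\dim\big(V_{\P^{\frac{n}{2}}}\cap V_{\check\P^{\frac{n}{2}}}\big)\geq\dim\T-(2c-g(m))$. Combining, $\codim_\T\big(V_{\P^{\frac{n}{2}}}\cap V_{\check\P^{\frac{n}{2}}}\big)=2c-g(m)=2\codnum_{1^{\frac{n}{2}+1},(d-1)^{\frac{n}{2}+1}}-\codnum_{1^{n-m+1},(d-1)^{m+1}}$, which is \eqref{6dec2016}.

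For the last assertion, take $m=-1$: then $g(-1)=\binom{d-1}{d}-0=0$, so $\intdim^d_n(-1)=2\codnum_{1^{\frac{n}{2}+1},(d-1)^{\frac{n}{2}+1}}=\codim V_{\P^{\frac{n}{2}}}+\codim V_{\check\P^{\frac{n}{2}}}$, i.e.\ the two smooth loci meet in the expected codimension. Together with Theorem \ref{main1.5}, which identifies the Zariski tangent space of the intersection at $0$ with $\ker[\pn_{i+j}([\P^{\frac{n}{2}}])]\cap\ker[\pn_{i+j}([\check\P^{\frac{n}{2}}])]$, and a check — via the Carlson--Griffiths period formula of Theorem \ref{14out2016}, exactly as in the proof of \eqref{12nov2015-2016} — that this kernel has codimension $2c$ as well, the intersection scheme is smooth at $0$, i.e.\ $V_{\P^{\frac{n}{2}}}$ and $V_{\check\P^{\frac{n}{2}}}$ are transverse there. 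The main obstacle is the first inequality above: a priori the intersection dimension of the two deformed planes could drop as $X$ moves, making $V_{\P^{\frac{n}{2}}}\cap V_{\check\P^{\frac{n}{2}}}$ strictly larger than the stratum $\overline{S_m}$ through $0$, and it is precisely the monotonicity of $g$ that forbids this; by contrast the incidence-variety count, the Cohen--Macaulayness of unions of linear spaces, and the combinatorial evaluations of $\codnum$ are routine, while the scheme-theoretic form of transversality in the $m=-1$ case rests additionally on Theorem \ref{14out2016}.
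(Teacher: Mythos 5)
Your proof is correct, but it takes a genuinely different route from the paper's. The paper disposes of \eqref{6dec2016} by a Koszul--complex computation (Section 17.9 of \cite{ho13}): writing $f=\sum f_ig_i=\sum \check f_i\check g_i$ with $\deg f_i=\deg\check f_i=1$, the tangent space of $V_{\P^{\frac{n}{2}}}$ (resp.\ $V_{\check\P^{\frac{n}{2}}}$) is the degree-$d$ piece of $I=\langle f_i,g_i\rangle$ (resp.\ $\check I$), so $\codim (I\cap\check I)_d=\codim I_d+\codim \check I_d-\codim(I+\check I)_d$, and $I+\check I$ is generated by a regular sequence of $n+1-m$ linear forms and $m+1$ forms of degree $d-1$ (the $g_i$'s supported on coordinate pairs where the two cycles differ die modulo the linear forms), whose Koszul resolution yields exactly the correction term $\codnum_{1^{n-m+1},(d-1)^{m+1}}$; this computes the codimension of the intersection of Zariski tangent spaces in one stroke, so the transversality for $m=-1$ comes for free. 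You instead compute the set-theoretic codimension by stratifying according to $\ell=\dim(L_1'(X)\cap L_2'(X))$ and counting dimensions on incidence varieties; your counts are right (I checked $\dim U_\ell$, the Mayer--Vietoris/ACM evaluation of $h^0(M_1\cup M_2,\O(d))$, the identity $\codnum_{1^{n-\ell+1},(d-1)^{\ell+1}}=\binom{\ell+d}{d}-(\ell+1)^2$, and the resulting values against Table \ref{12mar2017}), and the two non-routine inputs you isolate --- semicontinuity of $\ell$ combined with monotonicity of $g$ to dominate the deeper strata, and generic finiteness of $W_m\to\T$ for the reverse inequality --- do carry the argument. What your route buys is a geometric picture (the intersection is swept out, in the expected dimension, by the stratum $\ell=m$); what it costs is that scheme-theoretic transversality for $m=-1$ does not follow from the dimension equality alone, and your final appeal to Theorems \ref{main1.5} and \ref{14out2016} to show that $\ker[\pn_{i+j}([\P^{\frac{n}{2}}])]\cap\ker[\pn_{i+j}([\check\P^{\frac{n}{2}}])]$ has codimension $2c$ is in substance the paper's Koszul computation in that special case. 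One point to tighten: the uniqueness of the deformed plane $L_i'(X)$ (equivalently, that $L_i$ is an isolated reduced point of the Fano scheme of $X^d_n$) is asserted rather than proved; it holds here, but you can sidestep it entirely by running the stratification on the incidence correspondence rather than on $\T$, which changes none of the dimension counts.
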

The proof is a simple application of Koszul complex and can be found in Section 17.9 of \cite{ho13}. 

We are now going to analyze the number $\mydim^d_n(m)$ for $m=\frac{n}{2},\frac{n}{2}-1,\cdots$. 
Let $\P^{\frac{n}{2}}$ and $\check\P^{\frac{n}{2}}$ be as in Introduction. 
Let us first consider the case $m=\frac{n}{2}$.
For the proof of Theorem \ref{main1} we have verified the first equality in
\begin{eqnarray*}
 \mydim^d_n(\frac{n}{2})&=&\intdim^d_n(\frac{n}{2})=
 \codnum_{1^{\frac{n}{2}},(d-1)^{\frac{n}{2}}}.
 \end{eqnarray*}
 (the second equality follows from Theorem \ref{chilenos2017}).  
 One of the by-products of the proof  is that   
$V_{\Pn {\frac{n}{2}}}$ as an analytic scheme  is smooth and reduced.  
For $m=\frac{n}{2}-1$,  we have  
\begin{eqnarray*}
\mydim^d_n(\frac{n}{2}-1) &=& \codnum_{1^{\frac{n}{2}},2,(d-1)^{\frac{n}{2}},d-2}\leq 
\intdim^d_n(\frac{n}{2}-1)=2\codnum_{1^{\frac{n}{2}+1},(d-1)^{\frac{n}{2}+1}}-\codnum_{1^{\frac{n}{2}+2}, (d-1)^{\frac{n}{2}}}.
\end{eqnarray*}
The first equality is conjectural and we can verify it for special cases of $n$ and $d$ by a computer,
see \cite{Roberto}, Section 5. In this case the algebraic cycle $\P^{\frac{n}{2}}+ \check\P^{\frac{n}{2}}$ can be deformed
into a complete intersection algebraic cycle of type $(1^{\frac{n}{2}},2)$, 
and so, the inequality is justified. Since the underlying complex variety of 
the Hodge locus $V_{[\P^{\frac{n}{2}}]+[\check\P^{\frac{n}{2}}]}$ contains $V_{\P^{\frac{n}{2}}}\cap V_{\check\P^{\frac{n}{2}}}$, 
Theorem \ref{main1.5} and Theorem \ref{chilenos2017} imply that the inequality 
\begin{equation}
\label{13mar2017}
\mydim^d_n(m)\leq  \intdim^d_n(m)
\end{equation}
holds for arbitrary $m$ between $-1$ and $\frac{n}{2}$.   
We conjecture that 
$$
\mydim^d_n(-1)=2\cdot \codnum_{1^{\frac{n}{2}+1},(d-1)^{\frac{n}{2}}+1}
$$
which is the value of $\intdim^d_n(m)$ (note that $\codnum_{1^{n+1}}=0$).  
This is the same as to say that:
\begin{conj}\rm
\label{emilio2017}
Let $\P^{\frac{n}{2}}, \check\P^{\frac{n}{2}}$ be two linear algebraic cycles in the Fermat variety and with no common point.
The only deformations of $\P^{\frac{n}{2}}+\check\P^{\frac{n}{2}}$ as an algebraic or Hodge cycle is again a sum of two linear cycles.  
\end{conj}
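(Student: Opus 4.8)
The plan is to reduce Conjecture~\ref{emilio2017} to the numerical identity $\mydim^d_n(-1)=\intdim^d_n(-1)$ and then to extract this identity directly from the period formula of Theorem~\ref{14out2016}. Throughout I assume $(n,d)\neq(2,4),(4,3)$, equivalently $m_1:=\frac{n}{2}d-n-2\geq1$, which holds for all remaining pairs with $d\geq2+\frac{4}{n}$; in the two excluded cases the $(\frac{n}{2}+1,\frac{n}{2}-1)$-Hodge number is $1$, every Hodge locus is a smooth reduced divisor (see the remarks after Theorem~\ref{27.03.17}), and the statement is degenerate.

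\emph{Step 1: reduction.} Write $A:=[\pn_{i+j}([\P^{\frac{n}{2}}])]$ and $B:=[\pn_{i+j}([\check\P^{\frac{n}{2}}])]$, so that by additivity of the integral the period matrix of $[\P^{\frac{n}{2}}]+[\check\P^{\frac{n}{2}}]$ is $A+B$. By Theorem~\ref{main1.5}, the smoothness of $V_{[\P^{\frac{n}{2}}]}$ (discussion after Theorem~\ref{main2}) and $V_{\P^{\frac{n}{2}}}=V_{[\P^{\frac{n}{2}}]}$ (and likewise for $\check\P^{\frac{n}{2}}$), the loci $V_{\P^{\frac{n}{2}}}$, $V_{\check\P^{\frac{n}{2}}}$ are smooth at $0$ with tangent spaces $\ker A$, $\ker B$, while $T_0V_{[\P^{\frac{n}{2}}]+[\check\P^{\frac{n}{2}}]}=\ker(A+B)$. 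As $m=-1$, Theorem~\ref{chilenos2017} gives that $V_{\P^{\frac{n}{2}}}$, $V_{\check\P^{\frac{n}{2}}}$ meet transversally at $0$; hence $\ker A+\ker B=T_0\T$, $\dim(\ker A\cap\ker B)=\#I_d-\rank A-\rank B$, and $\intdim^d_n(-1)=\codim(V_{\P^{\frac{n}{2}}}\cap V_{\check\P^{\frac{n}{2}}})=\rank A+\rank B$. Since $\ker A\cap\ker B\subseteq\ker(A+B)$ always, the identity $\mydim^d_n(-1)=\rank(A+B)=\intdim^d_n(-1)$ is \emph{equivalent} to the reverse inclusion $\ker(A+B)\subseteq\ker A$. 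Granting it, $\ker(A+B)=\ker A\cap\ker B$, so $\dim T_0V_{[\P^{\frac{n}{2}}]+[\check\P^{\frac{n}{2}}]}=\#I_d-\intdim^d_n(-1)$; since this locus contains the smooth $V_{\P^{\frac{n}{2}}}\cap V_{\check\P^{\frac{n}{2}}}$, which has the same dimension at $0$, it is itself smooth at $0$ and equals $V_{\P^{\frac{n}{2}}}\cap V_{\check\P^{\frac{n}{2}}}$ near $0$. Finally, $\P^{\frac{n}{2}}$ and $\check\P^{\frac{n}{2}}$ being disjoint, every algebraic deformation of $\P^{\frac{n}{2}}+\check\P^{\frac{n}{2}}$ is a disjoint union of flat deformations of $\P^{\frac{n}{2}}$ and $\check\P^{\frac{n}{2}}$ in $\Pn{n+1}$, hence again two linear $\Pn{\frac{n}{2}}$'s (the Hilbert scheme of a linear $\Pn{\frac{n}{2}}\subset\Pn{n+1}$ is the smooth reduced Grassmannian $\gr(\frac{n}{2}+1,n+2)$); so such algebraic deformations form exactly $V_{\P^{\frac{n}{2}}}\cap V_{\check\P^{\frac{n}{2}}}$, on which the Hodge class is the sum of two linear cycles, and the conjecture follows.

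\emph{Step 2: $\ker(A+B)\subseteq\ker A$.} For a multi-index $k$ put $q(k):=(k_0+k_1,\dots,k_n+k_{n+1})$ and $\alpha(k):=k_0+k_2+\cdots+k_n$, so $\alpha$ is additive. By Theorem~\ref{14out2016} (with $b$ the identity permutation, of sign $+1$, for both cycles) the entry $A_{ij}$ is nonzero exactly when $q(i+j)=(d-2,\dots,d-2)$, and then $A_{ij}=c\,\zeta_{2d}^{\,\alpha(i)+\alpha(j)}$ with $c\neq0$; since $\check\P^{\frac{n}{2}}$ is obtained from $\P^{\frac{n}{2}}$ in \eqref{CarolinePilar} by replacing every $\zeta_{2d}$ with $\zeta_{2d}^{3}$, the matrix $B$ has the same support and $B_{ij}=\lambda\,\zeta_d^{\,\alpha(i)+\alpha(j)}A_{ij}$ with $\lambda=\zeta_d^{\,\frac{n}{2}+1}$; equivalently $B=\lambda\,D_\alpha\,A\,D_\beta$, $D_\alpha=\diag(\zeta_d^{\,\alpha(i)})$, $D_\beta=\diag(\zeta_d^{\,\alpha(j)})$. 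Columns $j$ with some entry of $q(j)$ exceeding $d-2$ are zero columns of both $A$ and $B$ and may be discarded. Fix $v\in\ker(A+B)$; setting $c_i:=(d-2,\dots,d-2)-q(i)$ and $S_r(c):=\sum_{j:\,q(j)=c}\zeta_{2d}^{\,r\alpha(j)}v_j$, the $i$-th equation of $(A+B)v=0$ becomes, after dividing by $c\,\zeta_{2d}^{\,\alpha(i)}$ and using $\zeta_d^{\,\alpha(j)}\zeta_{2d}^{\,\alpha(j)}=\zeta_{2d}^{\,3\alpha(j)}$, the relation $S_1(c_i)+\lambda\,\zeta_d^{\,\alpha(i)}\,S_3(c_i)=0$. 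Now for every $p$ with entries in $[0,d-2]$ and $\sum_e p_e=m_1$ there are $i,i'\in I_{m_1}$ with $q(i)=q(i')=p$ and $\alpha(i)\not\equiv\alpha(i')\pmod d$: as $m_1\geq1$ some $p_{e_0}\geq1$, and one takes $i$ with $(i_{2e_0},i_{2e_0+1})=(0,p_{e_0})$ and $i'$ with $(i'_{2e_0},i'_{2e_0+1})=(1,p_{e_0}-1)$, all remaining coordinates equal, so $\alpha(i')=\alpha(i)+1$. Subtracting the two corresponding relations and dividing by $\lambda(\zeta_d^{\,\alpha(i)}-\zeta_d^{\,\alpha(i')})\neq0$ yields $S_3((d-2,\dots,d-2)-p)=0$, and then $S_1((d-2,\dots,d-2)-p)=0$. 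Every nonzero column $j$ has $q(j)$ of this shape for a unique such $p$, so $S_1(c)=0$ for every $c$ occurring; therefore $(Av)_i=c\,\zeta_{2d}^{\,\alpha(i)}S_1(c_i)=0$ for all $i$, i.e.\ $v\in\ker A$ (and then $v\in\ker B$).

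The reduction of Step~1 is formal; the substance is Step~2, and within it the one point requiring care is packaging Theorem~\ref{14out2016} so that the phase $\zeta_{2d}^{\,\alpha(\cdot)}$ decouples into a row factor times a column factor (that is, $B=\lambda\,D_\alpha\,A\,D_\beta$) --- after which the rank identity is exactly the pigeonhole above, whose only input is $m_1\geq1$. The disjointness hypothesis $m=-1$ enters solely through the transversality clause of Theorem~\ref{chilenos2017}; for $m\geq0$ the same scheme of proof is meant to measure the gap $\intdim^d_n(m)-\mydim^d_n(m)$, and for $\cf[\P^{\frac{n}{2}}]+\check\cf[\check\P^{\frac{n}{2}}]$ the cancellation $1+\zeta$ above is replaced by $\cf+\check\cf\,\zeta$ over the roots of unity $\zeta$ that occur, so that $\mydim$ drops below $\intdim$ precisely when such a sum can vanish --- which is the mechanism behind Conjecture~\ref{CanISayFinally?}. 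If one prefers to avoid the explicit constants of Theorem~\ref{14out2016}, the identical computation can be run inside the Artinian Gorenstein ring $R=\C[x_1,x_3,\dots,x_{n+1}]/(x_1^{d-1},\dots,x_{n+1}^{d-1})$, where $A$ and $B$ are the pairings $(P,Q)\mapsto\tr_R(r_{\P}(P)\,r_{\P}(Q))$ and $(P,Q)\mapsto\tr_R(r_{\check\P}(P)\,r_{\check\P}(Q))$ attached to the two restriction homomorphisms $r_{\P},r_{\check\P}$ (each surjective onto the relevant graded pieces), this being the Carlson--Griffiths--Villaflor form of the period computation, cf.\ \cite{Roberto}, \cite{RobertoThesis}.
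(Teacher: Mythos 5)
First, note that the paper does not prove this statement at all: it is stated as a conjecture, supported only by the computational verifications recorded in the $m=-1$ entries of Theorem \ref{27.03.17} (finitely many pairs $(n,d)$, checked by machine) and by the reduction to the numerical identity $\mydim^d_n(-1)=\intdim^d_n(-1)$. Your Step 1 reproduces that reduction correctly (it is the same formal argument the paper uses after Theorem \ref{01.j.2017}: containment of the smooth $V_{\P^{n/2}}\cap V_{\check\P^{n/2}}$ in a germ whose Zariski tangent space has the same dimension forces equality), and your Step 2 is, as far as I can check, a correct and genuinely theoretical proof of $\ker(A+B)\subseteq\ker A$ for the specific pair \eqref{CarolinePilar}: the factorization $B=\lambda D_\alpha A D_\beta$ follows from Theorem \ref{14out2016}, and the pigeonhole producing two rows $i,i'$ with $q(i)=q(i')$ and $\alpha(i')=\alpha(i)+1$ only needs $\frac{n}{2}d-n-2\geq 1$. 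That would upgrade the computer-verified $m=-1$ cases of Theorem \ref{27.03.17} to all $(n,d)$ other than $(2,4),(4,3)$, which is more than the paper establishes.

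The genuine gap is one of generality. Conjecture \ref{emilio2017} is stated for \emph{arbitrary} disjoint linear cycles $\licy{\frac{n}{2}}_{a,b}$ and $\licy{\frac{n}{2}}_{\check a,\check b}$ from \eqref{6apr2017}, and disjointness does not force $b=\check b$: already for $n=2$ the lines pairing $(x_0,x_1),(x_2,x_3)$ and those pairing $(x_0,x_2),(x_1,x_3)$ are disjoint whenever the relevant conductor is not divisible by $2d$ (this is exactly the ``bicycle'' combinatorics of \S\ref{25.03.2017}). When $b\neq\check b$ the support conditions $i_{b_{2e}}+i_{b_{2e+1}}=d-2$ and $i_{\check b_{2e}}+i_{\check b_{2e+1}}=d-2$ cut out \emph{different} sets of nonzero entries, so there is no relation of the form $B=\lambda D_\alpha A D_\beta$, and your subtraction trick (which lives entirely on the common support) does not start. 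Your argument does extend verbatim to any disjoint pair with the same permutation $b$ and arbitrary $a,\check a$ (the row difference $\tilde\alpha(i')-\tilde\alpha(i)=\check a_{2e_0+1}-a_{2e_0+1}$ is then nonzero mod $d$ precisely by disjointness at the index $e_0$), but you cannot reduce the general case to this one, because the independence of $\mydim^d_n(m)$ from the choice of cycles is itself only Conjecture \ref{colombianos2017}. So what you have is a proof of a substantial special case of the conjecture --- worth stating as a theorem in its own right --- not a proof of the conjecture as formulated; to close it you would need to run the same kind of cancellation on the union of the two supports, with the new/old bicycle dichotomy controlling when the two phase functions can be separated.
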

Particular cases of this conjecture has been announced in Theorem \ref{27.03.17} (those with $m=-1$).  
It might happen that in \eqref{13mar2017} we have a strict inequality, see for instance Table \ref{12mar2017}. 
\begin{conj}
\label{21m2017}
 For $n\geq 6$ we have 
 \begin{equation}
\label{13.03.2017}
\mydim^d_n(\frac{n}{2}-2)<\intdim^d_n(\frac{n}{2}-2).
\end{equation}
\end{conj}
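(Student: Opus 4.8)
By Theorem \ref{main1.5} the number $\mydim^d_n(m)=\rank[\pn_{i+j}([\P^{\frac n2}]+[\check\P^{\frac n2}])]$ is the codimension of the Zariski tangent space of the Hodge locus $V_{[\P^{\frac n2}]+[\check\P^{\frac n2}]}$ at the Fermat point, and by Theorem \ref{chilenos2017} together with \eqref{13mar2017} it is at most $\intdim^d_n(m)$; the plan is to compute $\mydim^d_n(\frac n2-2)$ exactly and then to sharpen this inequality. Using Theorem \ref{14out2016} and the choices of $\P^{\frac n2},\check\P^{\frac n2}$ fixed in \S\ref{eid1396}, one checks that $\pn_k([\check\P^{\frac n2}])=\zeta_d^{\,k_{n-2}+k_n+2}\,\pn_k([\P^{\frac n2}])$, hence
\[
\bigl[\pn_{i+j}([\P^{\frac n2}]+[\check\P^{\frac n2}])\bigr]=M+D_I\,M\,D_J,\qquad M:=\bigl[\pn_{i+j}([\P^{\frac n2}])\bigr],
\]
where $D_I=\diag(\zeta_d^{\,i_{n-2}+i_n+2})$ and $D_J=\diag(\zeta_d^{\,j_{n-2}+j_n})$ are the diagonal operators by which the order-$d$ automorphism $x_{n-2}\mapsto\zeta_dx_{n-2},\ x_n\mapsto\zeta_dx_n$ twists the period matrix of one linear cycle into that of the other.

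Again by Theorem \ref{14out2016}, $\pn_k([\P^{\frac n2}])$ vanishes unless $k_{2e}+k_{2e+1}=d-2$ for all $e$, and there it factors over the $\frac n2+1$ index-blocks $\{2e,2e+1\}$. Hence $M$ is block-diagonal with blocks indexed by the block-sum vectors $(s_0,\dots,s_{\frac n2})$ with $0\le s_e\le d-2$ and $\sum_e s_e=\frac n2 d-n-2$, each block being a rank-one matrix $c\,u\,v^{\top}$ with $u,v$ nowhere-vanishing; so $\rank M$ equals the number of such vectors, which by \eqref{12nov2015-2016} is $\binom{\frac n2+d}{d}-(\frac n2+1)^2$. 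Since $D_I,D_J$ are diagonal, $M+D_IMD_J$ has the same block structure, each block being a sum of two rank-one matrices; such a sum has rank $2$ unless $D_I$ or $D_J$ is scalar on that block (and it is never $0$, as $u,v$ have no zero entry). A short computation shows that $D_I$ is scalar on the block $(s_e)$ iff $s_{\frac n2-1}=s_{\frac n2}=0$, and $D_J$ is scalar on it iff $s_{\frac n2-1}=s_{\frac n2}=d-2$; for $d\ge 3$ these are mutually exclusive. Therefore
\[
\mydim^d_n\!\left(\frac n2-2\right)=2\left[\binom{\frac n2+d}{d}-\left(\frac n2+1\right)^{\!2}\right]-\mathcal D ,
\]
where $\mathcal D$ is the number of $(s_0,\dots,s_{\frac n2-2})\in\{0,\dots,d-2\}^{\frac n2-1}$ with $\sum_e s_e=\frac n2 d-n-2$, plus the number of such tuples with $\sum_e s_e=(\frac n2-2)d-n+2$.

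On the other side, Theorem \ref{chilenos2017} together with \eqref{12nov2015-2016} gives $\intdim^d_n(\frac n2-2)=2\rank M-\mathcal C$ with $\mathcal C:=\codnum_{1^{\frac n2+3},(d-1)^{\frac n2-1}}$, and the already known bound \eqref{13mar2017} is exactly the statement $\mathcal D\ge\mathcal C$. Hence \eqref{13.03.2017} is equivalent to the strict combinatorial inequality $\mathcal D>\mathcal C$, which I would prove by expanding both quantities by inclusion–exclusion and comparing them term by term — the expected mechanism being that the block-sum vectors counted by $\mathcal C$ form a proper subset of those counted by $\mathcal D$. For general $\cf,\check\cf$ (as needed for the first evidence quoted before Conjecture \ref{CanISayFinally?}) the same reduction applies verbatim, with $1+\zeta_d^{(\cdot)}$ replaced by $\cf+\check\cf\,\zeta_d^{(\cdot)}$ and an additional rank drop on the blocks where this scalar vanishes.

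The real difficulty is the last step. The two tuple-counts making up $\mathcal D$ behave oppositely in $d$: the first is nonzero only for $d\le 4$ while the second is nonzero only for $d\ge\frac{2(n-2)}{n-4}$, so for many pairs $(n,d)$ exactly one of them survives and one must show that single count already strictly exceeds $\mathcal C$, uniformly in $n\ge 6$ and in $d$. I do not see how to avoid an explicit case analysis here; for a fixed $(n,d)$ all the quantities are fully explicit and the inequality is the finite verification already carried out in \S\ref{eid1396}.
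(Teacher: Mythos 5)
You have not proved the statement, and neither does the paper: \eqref{13.03.2017} is stated as a conjecture, and the paper's only support for it is the finite machine computation of the pairs $(\mydim^3_n(\tfrac n2-2),\intdim^3_n(\tfrac n2-2))$ in Table \ref{12mar2017} together with the short list of five-tuples $(n,d,m|\mydim^d_n(m),\intdim^d_n(m))$ that follows it. Your structural reduction is, as far as I can check, correct and genuinely goes beyond what the paper does: the twisting identity $[\pn_{i+j}([\P^{\frac n2}]+[\check\P^{\frac n2}])]=M+D_IMD_J$ follows from Theorem \ref{14out2016} applied to the two choices of $a$ fixed in \S\ref{eid1396}, the block decomposition of $M$ into rank-one pieces indexed by block-sum vectors is right, and your resulting closed formula $\mydim^d_n(\tfrac n2-2)=2\,\rank M-\mathcal D$ reproduces the entries $6,16,32,55$ of the column $\frac n2-m=2$ of Table \ref{12mar2017} for $n=6,8,10,12$, $d=3$. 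Turned into a lemma, this would replace the computer evaluation of $\mydim^d_n(\tfrac n2-2)$ by an explicit count, which is real added value.

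The gap is the last step, which you yourself flag: the conjecture is now equivalent to the strict inequality $\mathcal D>\mathcal C$ with $\mathcal C=\codnum_{1^{\frac n2+3},(d-1)^{\frac n2-1}}$, and this is nowhere established. Saying that the block-sum vectors counted by $\mathcal C$ ``form a proper subset of those counted by $\mathcal D$'' is not yet meaningful, because $\mathcal C$ is defined by the alternating sum \eqref{1julio2016-bonn} and is not presented as the cardinality of any set of block-sum vectors; to compare the two quantities you would first have to re-express $\mathcal C$ as such a count (it is the codimension of the locus of hypersurfaces containing a complete intersection of type $1^{\frac n2+3},(d-1)^{\frac n2-1}$, computed via a Koszul complex as in Theorem \ref{chilenos2017}), and then exhibit an injection that misses something. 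Moreover the dichotomy you point out --- the first summand of $\mathcal D$ survives only for $d\le 4$ and the second only for $d\ge\frac{2(n-2)}{n-4}$ --- means the argument necessarily splits into regimes, and in the regime $d>4$, $n=6$ excluded by $d\ge 4$ one must still check the inequality is strict, uniformly in $n$. Until $\mathcal D>\mathcal C$ is proved for all $n\ge 6$ and all admissible $d$, what you have is a sharper reformulation of the conjecture, not a proof of it; for any fixed $(n,d)$ it collapses to exactly the finite verification the paper already performs.
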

Our favorite examples for verifying Conjecture \ref{21m2017}  are cubic Fermat varieties, that is $d=3$. 
For  $n\geq 4$ we have the following range:
 \begin{equation}
 \label{happyandsad-bonn}
\binom{\frac{n}{2}+1}{3}\leq  {\rm rank}([\pn_{i+j}]) \leq  \binom{n+2}{{\rm min }\{3, \frac{n}{2}-2\}}
 \end{equation}
 \href{http://w3.impa.br/~hossein/WikiHossein/files/Singular%20Codes/2017-03-IntersectionOfTwoLinearCycles.txt}
 {and in Table \ref{12mar2017} we have computed $\mydim^3_n(m)$ for $4\leq n\leq 10$ and $-1\leq m \leq \frac{n}{2}$.}
 The following table is the main evidence for Conjecture \ref{21m2017}. 
 \begin{table}[H]
\begin{tabular}{|c||c|c|c|c|c|c|c|c|}
\hline
$n\backslash (\frac{n}{2}-m)$    
        & $0$    &  $1$   &  $2$   & $3$   & $4$  & $5$  & $6$  &  $7$ \\  \hline \hline 
 $4$    & $(1,1)$  & $(1,2)$  & $(1,2)$  & $(1,2)$  &      &      & &   \\  \hline  
 $6$    & $(4,4)$  & $(4,7)$   & $(6,8)$   & $(7,8)$ & $(8,8)$&      & &   \\ \hline  
  $8$   & $(10,10)$& $(10,16)$  & $(16,19)$ & $(19,20)$ &$(20,20)$&$(20,20)$&  &     \\  \hline 
  $10$  & $(20,20)$ & $(20, 30)$ &  $(32,36)$      &   $(38,39)$    &   $(40,40)$  &   $(40,40)$   & $(40,40)$ &    \\  \hline 
  $12$  & $(35, 35)$& $(35,50)$  &  $(55,60)$      &   $(65,66)$    &   $(69,69)$  &   $(70,70)$   &  $(70,70)$ & $(70,70)$ \\ \hline
\end{tabular}
\caption{The numbers $(\mydim^3_n(m),\intdim^3_n(m))$.} 
\label{12mar2017}
\end{table}
We were also able to compute the five-tuples $(n,d,m| \mydim^d_n(m),\intdim^d_n(m))$ in the list below: 
\begin{eqnarray*}
 & & (4,4,0|11,12), \ (4,4, -1|12,12), \\
 & & (4,5,0|24,24),\  (4,5,-1|24, 24), \\
 & & (4,6,0|38, 38), \ (4,6,-1|38, 38),  \\
 & & (6,4,1|36,37),\  (6,4,0|38, 38),\   (6,4,-1| 38, 38). \\
\end{eqnarray*} 
We were not able to compute more data such as $?$ in $(4,7,0|?, 54)$. For $n=2$ and $4\leq d\leq 14$ 
we were also able to check Conjecture \ref{emilio2017}. Note that for the quartic Fermat fourfold we have
the range $6\leq \rank([\pn_{i+j}])\leq 21$ and $\T_{1,1,2}$ has codimension $8$.

\begin{proof}[Proof of Theorem \ref{27.03.17} for $\cf=\check\cf=1$] 
This is just the outcome of above computations in which $\mydim^d_n(m)=\intdim^d_n(m)$.
The full proof will be given after Theorem \ref{01.j.2017}. For $\cf=\check\cf=1$ we have 
Theorem \ref{27.03.17} for $(n,d,m)$ in
\begin{equation}
\label{9.9.2017}
(12,3,-1), (12,3,0),(12,3,1), (12,3,2),
\end{equation}
however, we were not able to verify Theorem \ref{01.j.2017}  in these cases.
\footnote{For the computations of $\mydim^d_n(m)$ and $\intdim^d_n(m)$ we have used the procedures 
{\tt SumTwoLinearCycle} and {\tt Codim}, respectively.}
\end{proof}

For the convenience of the reader we have also computed the table of Hodge numbers for cubic Fermat varieties. 
 Note that for $d=3,n=4$ the Hodge conjecture is well-known, see \cite{zu77}. 
\begin{table}[H]
\label{12mar2017-2}
\begin{tabular}{|c|c|c|c|}
\hline
$n$     & $\bn{3+n+1}{3}-(n+2)^2$  & $\binom{\frac{n}{2}+1}{3}$, $\binom{n+2}{{\rm min }\{3, \frac{n}{2}-2\}}$  
        &    Hodge numbers \\  \hline \hline 
 $4$    & $20$ &  $1,1$  & $0,1,21,1,0$\\  \hline  
 $6$    & $56$ &  $4,8$  & $0,0,8,71,8,0,0$ \\ \hline  
  $8$   & $120$ & $10,45$  &  $0, 0, 0, 45, 253, 45, 0, 0, 0$ \\  \hline 
  $10$  & $220$ & $20, 220$  & $0, 0, 0, 1, 220, 925, 220, 1, 0, 0, 0$ \\  \hline
  $12$  & $364$ & $35,1001$   &   $0,0,0,0, 14,1001, 3432, 1001, 14,0,0,0,0$   \\ \hline 
\end{tabular}
\caption{Hodge numbers} 
\end{table}

\begin{theo}
\label{01.j.2017}
For all pairs $(n,d)$ in Theorem \ref{27.03.17} with arbitrary $-1\leq m\leq \frac{n}{2}$ and all  
$x\in \Q$ with $x\not=0$, we have
\begin{equation}
\label{1july2017}
 \rank([\pn_{i+j}(\P^{\frac{n}{2}}+ x\check\P^{\frac{n}{2}}  )])=
  \rank([\pn_{i+j}(\P^{\frac{n}{2}}+ \check\P^{\frac{n}{2}}  )])
\end{equation}
and so this number does only depend on $(n,d,m)$ and not on $x$. 
\end{theo}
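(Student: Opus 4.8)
The plan is to analyze the entries of the matrix $[\pn_{i+j}(\P^{\frac{n}{2}}+x\check\P^{\frac{n}{2}})]$ as an explicit function of $x$, using the closed formula of Theorem \ref{14out2016}. First I would record that, by $\Q$-linearity of periods, $\pn_i(\P^{\frac{n}{2}}+x\check\P^{\frac{n}{2}}) = \pn_i(\P^{\frac{n}{2}}) + x\,\pn_i(\check\P^{\frac{n}{2}})$, so each entry of the matrix is an affine-linear polynomial in $x$. The key structural point, which I would extract from Theorem \ref{14out2016} together with the specific choice of $\P^{\frac{n}{2}}=\P^\frac{n}{2}_{a,b}$ with $a=(0,\dots,0)$ and $\check\P^{\frac{n}{2}}=\P^\frac{n}{2}_{a,b}$ with $a=(0^{m+1},1,\dots,1)$, both with $b=(0,1,\dots,n+1)$: the nonzero entries of $[\pn_{i+j}(\P^{\frac{n}{2}})]$ and of $[\pn_{i+j}(\check\P^{\frac{n}{2}})]$ are supported on exactly the same set of index pairs $(i,j)$ — namely those with $i_{b_{2e-2}}+i_{b_{2e-1}}=d-2$ for all $e$ — and on that common support the two matrices differ only by multiplication of each entry by a root of unity of the form $\zeta_{2d}^{2(i_{b_{2e}}+1)}$ accumulated over the blocks $e$ with $a_{2e+1}=1$. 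So on the common support we have $\pn_{i+j}(\check\P^{\frac{n}{2}}) = \lambda_{i+j}\,\pn_{i+j}(\P^{\frac{n}{2}})$ with $\lambda_{i+j}$ a root of unity depending only on the combinatorics of the multi-index $i+j$, not on $x$.

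Next I would reorganize the matrix into blocks indexed by the ``type'' of $i+j$ so that the scalar $\lambda$ is constant on each block, and observe that within a block the entry of $[\pn_{i+j}(\P^{\frac{n}{2}}+x\check\P^{\frac{n}{2}})]$ is $(1+\lambda x)$ times the corresponding entry of $[\pn_{i+j}(\P^{\frac{n}{2}})]$. Hence the whole matrix is obtained from $[\pn_{i+j}(\P^{\frac{n}{2}})]$ by multiplying each block column (equivalently block row) by the nonzero-for-generic-$x$ scalar $(1+\lambda x)$, plus the contributions off the common support — and here one must check that outside the common support the matrix is genuinely zero for both cycles, i.e. there is no index pair that is nonzero for $\check\P^{\frac{n}{2}}$ but zero for $\P^{\frac{n}{2}}$ or vice versa; this follows because the support condition $i_{b_{2e-2}}+i_{b_{2e-1}}=d-2$ from Theorem \ref{14out2016} does not involve $a$ at all. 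Once this is established, rank is invariant under multiplying blocks of rows/columns by nonzero scalars, so $\rank([\pn_{i+j}(\P^{\frac{n}{2}}+x\check\P^{\frac{n}{2}})])$ is constant on the Zariski-open set of $x$ where all the finitely many factors $1+\lambda x$ are nonzero; combined with upper semicontinuity of rank this gives that the rank is maximal and constant there, and one then checks separately that $x\neq 0$ suffices (the only $x$ we need to exclude is $x=0$, and the bad values $-\lambda^{-1}$ are roots of unity times $-1$, which for the finitely many $(n,d,m)$ in Theorem \ref{27.03.17} can be verified by the computer to not cause a rank drop, or argued directly).

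The main obstacle I anticipate is the bookkeeping that makes the ``common support'' claim and the ``constant $\lambda$ on each block'' claim fully rigorous: one has to unwind the sign $\mathrm{sign}(b)$ and the exponent $\epsilon=\sum_{e=0}^{n/2}(i_{b_{2e}}+1)(1+2a_{2e+1})$ from Theorem \ref{14out2016} for both choices of $a$, confirm that the ratio of the two period vectors is entrywise a root of unity constant on explicit combinatorial blocks, and confirm that no cancellation or new support appears. This is essentially a careful but elementary computation with the Carlson--Griffiths formula; the conceptual content is just that scaling the $a$-parameter of a linear cycle rescales each period by a root of unity without moving its vanishing locus, so $\P^{\frac{n}{2}}+x\check\P^{\frac{n}{2}}$ has, up to invertible diagonal scaling of the rows and columns, the same period matrix as $\P^{\frac{n}{2}}+\check\P^{\frac{n}{2}}$ for all $x$ away from a finite set, and in particular the same rank. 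The restriction to the finite list of $(n,d)$ in Theorem \ref{27.03.17} enters only through the computer verification that the value at $x=1$ already equals the generic value and that $x=0$ is the unique exceptional parameter.
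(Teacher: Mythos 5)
Your starting observations are fine: each entry of $A(x):=[\pn_{i+j}(\P^{\frac{n}{2}}+x\check\P^{\frac{n}{2}})]$ is affine linear in $x$, the two period matrices have the same support (the condition $i_{b_{2e-2}}+i_{b_{2e-1}}=d-2$ in Theorem \ref{14out2016} does not involve $a$), and on that support $\pn_{i+j}(\check\P^{\frac{n}{2}})=\lambda_{i+j}\,\pn_{i+j}(\P^{\frac{n}{2}})$ with $\lambda_{i+j}$ a root of unity determined by $i+j$. The gap is in the next step. The factor $1+\lambda_{i+j}x$ depends on the \emph{sum} of the row and column indices, so passing from $[\pn_{i+j}(\P^{\frac{n}{2}})]$ to $A(x)$ is a Hadamard (entrywise) product with the matrix $[1+\lambda_{i+j}x]$; the positions sharing a given value of $\lambda$ form anti-diagonal strata of this Hankel-type matrix, not submatrices cut out by a set of rows and a set of columns, so this is not a block row or block column scaling, and Hadamard products do not preserve rank. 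Concretely, if your scaling argument were valid it would give $\rank(A(x))=\rank([\pn_{i+j}(\P^{\frac{n}{2}})])=\bn{\frac{n}{2}+d}{d}-(\frac{n}{2}+1)^2$ for generic $x$, and this is false: for $(n,d)=(6,3)$ that number is $4$, while Table \ref{12mar2017} gives $\mydim^3_6(m)=6,7,8$ for $m=1,0,-1$. (The ratio $\lambda_{i+j}$ does in fact factor as $c\,\mu_i\nu_j$, so $[\pn_{i+j}(\check\P^{\frac{n}{2}})]=c\,D_\mu[\pn_{i+j}(\P^{\frac{n}{2}})]D_\nu$ with $D_\mu,D_\nu$ diagonal, which is why the two single-cycle matrices have equal rank; but $P+xc\,D_\mu PD_\nu$ is not a two-sided diagonal rescaling of $P$, and the rank jump from $4$ to $6,7,8$ is carried precisely by the entrywise factor you are treating as harmless.)

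The paper's proof avoids any such structure and is a degree count on minors plus computation. Since the entries of $A(x)$ have degree $\leq 1$ in $x$, every $(a+1)\times(a+1)$ minor of $A(x)$, with $a:=\mydim^d_n(m)$, is a polynomial of degree $\leq a+1$ in $x$; verifying $\rank(A(x))=a$ at $a+2$ distinct values forces all such minors to vanish identically, so the generic rank is exactly $a$ (note that upper semicontinuity alone, which you invoke, only gives that the generic rank is $\geq\rank(A(1))$, not that it equals it). The exceptional $x$ are then among the roots of $P(x):=\det(B(x))$ for any one $a\times a$ minor $B$ with $P\not\equiv 0$; one computes the rational roots of $P$ and checks the rank at each, finding that only $x=0$ gives a drop. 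If you want to salvage your structural idea you would need to control the rank of the Hadamard product $[\,(1+\lambda_{i+j}x)\pn_{i+j}(\P^{\frac{n}{2}})\,]$ directly as a function of $x$, which is exactly the difficulty the paper's minor-degree argument sidesteps.
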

\begin{proof}
Let $a:=\mydim_n^d(m)$ be the number in the right hand side of \eqref{1july2017} and
let $A(x):=[\pn_{i+j}(\P^{\frac{n}{2}}+ x\check\P^{\frac{n}{2}}  )]$.
Except for a finite number of $x\in\Q$, we have $\rank(A(x))\geq a$ and in order to prove the equality, 
it is enough to check it for $a+2$ distinct
values of $x$. This is because if $\rank(A(x))>a$ then we have a $(a+1)\times (a+1)$ minor
of $A(x)$ whose determinant is not zero. This is a polynomial of degree at most $a+1$ in $x$,  and it has
$(a+2)$ roots which leads to a contradiction. This argument
implies that except for a finite number   of values for $x$ we have $\rank(A(x))=a$. 
These are the roots of $\det(B(x))=0$, where $B$ is any  $a\times a$ minor of $A(x)$ such
that $P(x):=\det(B(x))$ is not identically zero.
We find such a minor and  compute
$\rank(A(x))$ for all rational roots of $P(x)$ and prove that this is $a$ except for $x=0$.
\footnote{See {\tt GoodMinor} and {\tt ConstantRank}.}
\href{http://w3.impa.br/~hossein/WikiHossein/files/Singular%20Codes/2017-08-ConstantRankData}
{It seems interesting that only for $(n,d,m)= (6,3,1)$, $(6,3,0)$, $(8,3,2)$,$(8,3,1)$,$(6,4,1)$,$(10,3,3)$, $(10,3,2)$
we find  a rational root of $P(x)$, and in all these cases it is $x=-1$.}
This seems to have some relation 
with Conjecture \ref{CanISayFinally?} for $(\cf,\check\cf)=(1,-1)$.
\end{proof}
\begin{proof}[Proof of Theorem \ref{27.03.17}:]
For all the cases
in Theorem \ref{27.03.17} 
$$
\rank\left(\left [\cf\pn_{i+j}(\P ^{\frac{n}{2}})+\check\cf\check\pn_{i+j}(\check \P ^{\frac{n}{2}})
\right]\right)=
\rank\left(\left [\pn_{i+j}(\P ^{\frac{n}{2}})+\check\pn_{i+j}(\check \P ^{\frac{n}{2}})
\right]\right)
=
\intdim_n^d(m),
$$
where for the first equality we have used Theorem \ref{01.j.2017}. 
We know that $V_{\P^{\frac{n}{2}}}\cap V_{\check\P^{\frac{n}{2}}}$ is the subset of
the analytic variety underlying 
$V_{\cf[\P^{\frac{n}{2}}]+\check\cf[\check\P^{\frac{n}{2}}]}$ and its codimension is 
$\intdim_n^d(m)$. This proves the theorem. 
\end{proof}

\section{Smooth and reduced Hodge loci}
\label{tanhai2017}
Based on the computation in \S\ref{eid1396}, we have formulated Conjecture \ref{21m2017}, and we further claim that: 
\begin{conj}
\label{Duisburg2016}
Let $\P^{\frac{n}{2}}$ and $\check\P^{\frac{n}{2}}$ be two linear cycles in the Fermat variety $X^d_n$ with
$d\geq 2+\frac{4}{n}$ and  $\P^{\frac{n}{2}}\cap \check\P^{\frac{n}{2}}=\P^{m}$ with    
$-1\leq m\leq \frac{n}{2}-1$  and $\mydim^d_n(m)<  \intdim^d_n(m)$.
There is a finite number of coprime non-zero integers $\cf,\check\cf$ such that 
the analytic scheme 
$V_{\cf[\P^{\frac{n}{2}}]+\check \cf[\check\P^{\frac{n}{2}}]}$ 
is smooth and reduced. 
\end{conj}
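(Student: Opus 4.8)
The plan is to split the statement into an \emph{existence} part (there is at least one good pair) and a \emph{finiteness} part (there are at most finitely many), both controlled by the period matrix $[\pn_{i+j}]$ and its higher order prolongations. By Theorem \ref{main1.5}, for any $\cf,\check\cf\in\Z$ the analytic scheme $V_{\cf[\P^{\frac{n}{2}}]+\check\cf[\check\P^{\frac{n}{2}}]}$ has, at the Fermat point $0$, a Zariski tangent space of codimension
$$
r(\cf,\check\cf):=\rank\left(\left[\pn_{i+j}\left(\cf[\P^{\frac{n}{2}}]+\check\cf[\check\P^{\frac{n}{2}}]\right)\right]\right).
$$
Since $V_{\P^{\frac{n}{2}}}\cap V_{\check\P^{\frac{n}{2}}}$ is a subscheme of it of codimension $\intdim^d_n(m)$, the same argument as for \eqref{13mar2017} gives $r(\cf,\check\cf)\le\intdim^d_n(m)$, with $r(1,1)=\mydim^d_n(m)<\intdim^d_n(m)$ in the range of the conjecture, and consequently $\dim_0 V_{\cf[\P^{\frac{n}{2}}]+\check\cf[\check\P^{\frac{n}{2}}]}\ge\dim\T-\intdim^d_n(m)$. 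A local complex-analytic scheme is smooth and reduced at a point precisely when its local dimension equals the dimension of its Zariski tangent space; hence $V_{\cf[\P^{\frac{n}{2}}]+\check\cf[\check\P^{\frac{n}{2}}]}$ is smooth and reduced at $0$ if and only if it contains an analytic germ through $0$ of codimension exactly $r(\cf,\check\cf)$, and any such germ is then automatically strictly larger than $V_{\P^{\frac{n}{2}}}\cap V_{\check\P^{\frac{n}{2}}}$ whenever $r(\cf,\check\cf)<\intdim^d_n(m)$, i.e. a ``strange'' component of the Hodge locus appears.

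For the existence part, one first notes, by the minor-counting argument of Theorem \ref{01.j.2017}, that $(\cf:\check\cf)\mapsto r(\cf,\check\cf)$ is constant off a finite subset of the projective line of ratios. The candidate germ inside $V_{\cf[\P^{\frac{n}{2}}]+\check\cf[\check\P^{\frac{n}{2}}]}$ is the deformation locus $\T_Z\subseteq\T$ of a semi-irreducible algebraic cycle $Z$ of dimension $\frac{n}{2}$ obtained by smoothing the reducible cycle $\cf\,\P^{\frac{n}{2}}+\check\cf\,\check\P^{\frac{n}{2}}$, supported on $\P^{\frac{n}{2}}\cup\check\P^{\frac{n}{2}}$ (which meet along $\P^m$), inside a generic small deformation of $X^d_n$; here $Z$ is homologous to $a\bigl(\cf[\P^{\frac{n}{2}}]+\check\cf[\check\P^{\frac{n}{2}}]\bigr)+b[\plc]$ for some $a\ne 0$, and the summand $b[\plc]$ is invisible to the Hodge locus because $[\plc]$ is primitively trivial, so $V_{[\plc]}=\T$ and $\T_Z\subseteq V_{\cf[\P^{\frac{n}{2}}]+\check\cf[\check\P^{\frac{n}{2}}]}$. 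One then computes $\codim_\T\T_Z$ by the Hilbert-scheme-and-Koszul method of Theorem \ref{main2} and formula \eqref{1julio2016-bonn} and checks that it equals $r(\cf,\check\cf)=\mydim^d_n(m)$; when the two agree the scheme is regular at $0$, hence smooth and reduced, and strictly larger than $V_{\P^{\frac{n}{2}}}\cap V_{\check\P^{\frac{n}{2}}}$ since $\mydim^d_n(m)<\intdim^d_n(m)$. Empirically this succeeds for $(\cf,\check\cf)=(1,-1)$ (and $(-1,1)$), which is the only coprime pair with $\cf=-\check\cf$; producing $Z$ in this case is essentially the content of item (1) of Conjecture \ref{CanISayFinally?}, together with the linear-algebra Conjectures \ref{TrainFreiburgBonn} and \ref{13jan2017}, and this is exactly the step that is not yet available unconditionally.

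For the finiteness part one must show that for every coprime pair with $\cf\ne-\check\cf$ no germ of codimension $r(\cf,\check\cf)$ exists, i.e. $V_{\cf[\P^{\frac{n}{2}}]+\check\cf[\check\P^{\frac{n}{2}}]}$ is singular or non-reduced at $0$ --- which is precisely Theorem \ref{dream2017} in the cases treated there. The mechanism is the full Taylor expansion of the period functions $t\mapsto\int_{\delta_t}\omega_j$ supplied by Theorem \ref{InLabelNadasht?}: one exhibits an explicit ``strange'' tangent vector in $T_0 V_{\cf[\P^{\frac{n}{2}}]+\check\cf[\check\P^{\frac{n}{2}}]}$, not tangent to $V_{\P^{\frac{n}{2}}}\cap V_{\check\P^{\frac{n}{2}}}$, and shows that it fails to lift to an arc in the scheme already at second, third or fourth order, i.e. the relevant $N$-reducedness obstruction (computed from the IVHS data and its prolongations) is nonzero whenever $\cf\ne-\check\cf$. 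Since $\cf=-\check\cf$ with $\gcd(\cf,\check\cf)=1$ and $\cf,\check\cf\ne 0$ forces $(\cf,\check\cf)=\pm(1,-1)$, this confines the good set to finitely many pairs.

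The main obstacle is the existence part, specifically the honest construction of $Z$. The smoothing of $\P^{\frac{n}{2}}\cup\check\P^{\frac{n}{2}}$ inside a deformed Fermat variety is not known to exist unconditionally, and even granting it, its deformation theory is not under control: the obstruction space $H^1(Z,N_{Z/X})$ need not vanish and Bloch-type semi-regularity typically fails here, exactly as for complete-intersection curves in surfaces (see \S\ref{07.03.2017-2}), so one cannot directly conclude that $\T_Z$ has the expected codimension. In practice the implication is run backwards: one first establishes that $V_{\cf[\P^{\frac{n}{2}}]+\check\cf[\check\P^{\frac{n}{2}}]}$ is smooth and reduced for $(\cf,\check\cf)=(1,-1)$ by proving $N$-reducedness for all $N$ via Theorem \ref{InLabelNadasht?} --- so far a computer-assisted, case-by-case task, as in Theorems \ref{WillCome2017} and \ref{dream2017} --- and only then deduces the existence of $Z$ from the Hodge conjecture for all degree-$d$ hypersurfaces. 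A uniform, unconditional proof would require either a direct construction of $Z$ or a closed-form handle on the period power series of Theorem \ref{InLabelNadasht?} strong enough to settle $N$-reducedness simultaneously for all $N$; both remain open.
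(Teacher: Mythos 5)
This statement is Conjecture \ref{Duisburg2016}: the paper does not prove it, and neither do you --- your proposal is a program rather than a proof, and to your credit you say so explicitly. What you outline is essentially the paper's own program. The tangent-space bookkeeping in your first paragraph (Theorem \ref{main1.5}, the bound $r(\cf,\check\cf)\le\intdim^d_n(m)$ coming from the inclusion of $V_{\P^{\frac{n}{2}}}\cap V_{\check\P^{\frac{n}{2}}}$, and the criterion that regularity at $0$ is equivalent to the local dimension reaching the tangent-space dimension) is correct and is exactly how the paper frames the question. Your finiteness mechanism --- ruling out coprime pairs with $\cf\ne-\check\cf$ by exhibiting a failure of $N$-reducedness through the Taylor expansion of Theorem \ref{InLabelNadasht?} --- is precisely what Theorems \ref{WillCome2017} and \ref{dream2017} do, and, like the paper, it only reaches bounded ranges of $(n,d,m,\cf,\check\cf)$, so it does not yet yield finiteness. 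Your existence mechanism, realizing the codimension $r(\cf,\check\cf)$ by the deformation locus of an algebraic cycle $Z$, is also the paper's, and is carried out only for $m=\frac{n}{2}-1$ and $(\cf,\check\cf)=(1,1)$, where $Z$ is a complete intersection of type $(1^{\frac{n}{2}},2)$ and Theorem \ref{main2} applies.

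One substantive correction. For $m\le\frac{n}{2}-2$ you describe $Z$ as ``obtained by smoothing the reducible cycle supported on $\P^{\frac{n}{2}}\cup\check\P^{\frac{n}{2}}$.'' The paper explicitly rules this out: in \S\ref{17agu2017} it observes that $\cf\P^{\frac{n}{2}}+\check\cf\check\P^{\frac{n}{2}}$ with $\P^{\frac{n}{2}}\cap\check\P^{\frac{n}{2}}=\P^m$, $m\le\frac{n}{2}-2$, is \emph{not} semi-irreducible, precisely because the two components meet in codimension greater than one inside each of them, so no irreducible $Z_t$ can degenerate onto that configuration. The cycle $Z$ of Conjecture \ref{CanISayFinally?} is only required to be homologous to $a(\cf\P^{\frac{n}{2}}+\check\cf\check\P^{\frac{n}{2}})+b\plc$ and is otherwise unknown; this is why the paper runs the logic in the direction you yourself describe at the end (first smoothness and reducedness via $N$-reducedness for all $N$, then existence of $Z$ from the Hodge conjecture for all hypersurfaces), and why the existence half of the conjecture has no unconditional argument. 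With that correction, your identification of where the genuine gap lies agrees with the paper's own assessment.
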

If $\check\cf=0$ and  $\cf=1$ then we have the Hodge locus $V_{[\P^{\frac{n}{2}}]}$ which is
smooth and reduced by Theorem \ref{main2}.
Conjecture \ref{Duisburg2016} is true in the following case:  
$m=\frac{n}{2}-1$ and
$$
(n,d)=(2,d),\ 4\leq d\leq 15 \ \ \ (4,d), \  d=3,5,6,\ \ \ (6,d),\ \  d=3,4.  
$$
In this case, the Hodge locus $V_{[\P^{\frac{n}{2}}]+[\check\P^{\frac{n}{2}}]}$ is smooth and reduced at $0$ and it  
parameterizes hypersurfaces with a 
complete intersection of type
$(1^{\frac{n}{2}}, 2)$, see the comments before Theorem \ref{27.03.17}. The proof can be found in 
\cite{Roberto}. The analytic scheme $V_{\cf[\P^{\frac{n}{2}}]+\check\cf [\check\P^{\frac{n}{2}}]}$ is non-reduced or singular at $0$ in 
the cases covered in Theorem \ref{dream2017}. Other evidences to Conjecture \ref{Duisburg2016} are listed in Theorem 
\ref{WillCome2017} and Theorem \ref{dream2017}.

Assuming the Hodge conjecture, 
the points of the Hodge locus $V_{\cf[\P^{\frac{n}{2}}]+\check\cf [\check\P^{\frac{n}{2}}]}$ parametrizes
hypersurfaces with certain algebraic cycles. We do not have any idea how such algebraic cycles look like.  
In order to  verify Conjecture \ref{Duisburg2016} without constructing algebraic cycles, 
we have to analyze the the generators $\int_{\delta_t}\omega_i$ of 
the defining ideal  of the Hodge locus in \eqref{10maio16}. These are integrals depending on the parameter $t\in\T$ and their 
linear part is gathered in the matrix $[\pn_{i+j}]$. 
If Conjecture  \ref{Duisburg2016}  is true in these cases  then we have
discovered a new Hodge locus, different from $V_{[\P^{\frac{n}{2}}]}$, $V_{[\check\P^{\frac{n}{2}}]}$ and their intersection. 
The whole discussion of \S\ref{aima2017}
has the goal to provide tools to analyze Conjecture \ref{Duisburg2016}.

\section{The creation of a formula}
\label{aima2017}
In this section we compute the Taylor series of 
the integration of differential forms 
over monodromies of the algebraic cycle $\licy{\frac{n}{2}}_{a,b}$ inside the Fermat variety. 
Let us consider the hypersurface $X_t$ in the projective space 
$\Pn {n+1}$ given by the homogeneous polynomial:
\begin{equation}
\label{15dec2016}
f_t:=x_0^{d}+x_1^{d}+\cdots+x_{n+1}^d-\sum_{\alpha}t_\alpha x^\alpha=0,\ \ 
\end{equation}
$$
t=(t_\alpha)_{\alpha\in I}\in(\T,0), 
$$
where $\alpha$ runs through a finite subset $I$ of 
$\N_0^{n+2}$ with $\sum_{i=0}^{n+1} \alpha_i=d$. 
In practice, we will take the set $I$ of all such $\alpha$ with the additional 
constrain $0\leq \alpha_i\leq d-2$. For a rational number $r$ 
let $[r]$ be the integer part of $r$, that is $[r]\leq r<[r]+1$, and  $\{r\}:=r-[r]$. Let
also $(x)_y:=x(x+1)(x+2)\cdots(x+y-1),\ (x)_0:=1$ be the Pochhammer symbol. For $\beta\in \N_0^{n+2}$,  
$\bar\beta\in \N_0^{n+2}$ is defined by the rules:
$$
0\leq \bar\beta_i \leq d-1,\ \  \beta_i\equiv_{d}\bar \beta_i.
$$
\begin{theo}
\label{InLabelNadasht?}
Let $\delta_{t}\in H_n(X_t,\Z),\ t\in(\T,0)$ be the monodromy (parallel transport) of the cycle 
$\delta_0:=[\licy{\frac{n}{2}}_{a,b}]\in H_n(X_0,\Z)$ along a path which 
connects $0$ to $t$. 
For a monomial $x^\beta=x_0^{\beta_0} x_1^{\beta_1}x_2^{\beta_2}\cdots x_{n+1}^{\beta_{n+1}}$ with 
$k:=\sum_{i=0}^{n+1}\frac{\beta_i+1}{d}\in\N$  
we have 
 \begin{equation}
 \label{15.12.16}
 \frac{C }{  (2\pi \sqrt{-1})^{\frac{n}{2}}}
 \mathlarger{\mathlarger{\int}}_{\delta_t}\Resi\left(\frac{x^\beta\Omega}{f^{k}_t}\right)=
\mathlarger{\mathlarger{\mathlarger{\sum}}}_{a: I{}\to \N_0}
\left(
\frac{1}{ a! } D_{\beta+ a^*}\cdot   e^{ \pi \sqrt{-1}\cdot  {E}_{\beta+ a^*}} \right) \cdot  t^a,
\end{equation}
where  the sum runs through all $\#I{}$-tuples $a=(a_\alpha,\ \ \alpha\in I{})$
of non-negative integers such that for $\check\beta:=\beta+a^*$ we have 
\begin{equation}
\label{TheLastMistake2017}
\left\{\frac{ \check\beta_{b_{2e}}+1}{d} \right\}+   \left\{\frac{ \check\beta_{b_{2e+1}}+1}{d} \right\}=1,\ \ \ 
\forall  e=0,...,\frac{n}{2}, 
\end{equation}
and 
\begin{eqnarray*}
C &:=& 
{\rm sign}(b) \cdot (-1 )^{\frac{n}{2}} \cdot   d^{\frac{n}{2}+1}  \cdot (k-1)!,\\
t^a:&=&\prod_{\alpha\in I{}}t_\alpha^{a_\alpha}, \ \   \ \ \ \ \ \ \ |a| := \sum_{\alpha\in I{}}a_{\alpha},\\
a!&:=&\prod_{\alpha\in I{}}a_\alpha!,  \ \   \ \ \ \ \ \ \  a^* := \sum_{\alpha}a_\alpha\cdot \alpha, \\ 
D_{\check\beta} &:=& 
\mathlarger{\prod}_{i=0}^{n+1}\left( \left\{\frac{\check\beta_i+1}{d}\right\}\right)_{ \left[\frac{\check\beta_i+1}{d}\right ]},
 \\
 E_{\check\beta} &:= & 
		   \sum_{e=0}^\frac{n}{2} \left\{\frac{\check\beta_{b_{2e}}+1}{d}\right\}\cdot (1+2a_{2e+1}) 
\end{eqnarray*}
\end{theo}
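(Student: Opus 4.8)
The plan is to compute the period integral $\int_{\delta_t}\Resi(x^\beta\Omega/f_t^k)$ as a power series in $t$ by expanding the integrand around the Fermat point $t=0$ and integrating term by term, using the closed formula of Theorem \ref{14out2016} for the Fermat-point periods of linear cycles. First I would write $\frac{1}{f_t^k}=\frac{1}{(f_0-\sum_\alpha t_\alpha x^\alpha)^k}$ and expand by the generalized binomial series: $\frac{1}{f_t^k}=\sum_{a:I\to\N_0}\frac{(k)_{|a|}}{a!}\,\frac{x^{a^*}}{f_0^{k+|a|}}\,t^a$, where $a^*=\sum_\alpha a_\alpha\alpha$ and $t^a=\prod t_\alpha^{a_\alpha}$. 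This converts the single residue into a sum of residues of the shape $\Resi(x^{\beta+a^*}\Omega/f_0^{k+|a|})$, i.e. pure Fermat-variety periods of the (possibly higher pole order) forms, against the monodromy cycle $\delta_t$. One must be careful that $\delta_t$ is the monodromy translate, not $\delta_0$; but because the forms $\omega_i$ are flat sections of the algebraic de Rham cohomology bundle only up to the Gauss--Manin connection, the correct statement is that $\int_{\delta_t}\Resi(x^\gamma\Omega/f_0^\ell)$ equals the constant $\int_{\delta_0}\Resi(x^\gamma\Omega/f_0^\ell)$ — this is exactly the point that the differentiation is already carried out on the form side, so each coefficient in the $t$-expansion is a genuine Fermat period and is computed by Theorem \ref{14out2016}.

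Next I would substitute the Carlson--Griffiths/Theorem \ref{14out2016} evaluation. For the monomial $x^{\check\beta}$ with $\check\beta=\beta+a^*$, the Fermat period $\frac{1}{(2\pi\sqrt{-1})^{n/2}}\int_{\licy{n/2}_{a,b}}\Resi(x^{\check\beta}\Omega/f_0^{\,\cdot})$ is nonzero precisely when $\check\beta_{b_{2e}}+\check\beta_{b_{2e+1}}\equiv d-2 \pmod{?}$ — and here is where the pole-order bookkeeping enters: raising the pole order from $k$ to $k+|a|$ shifts which residue cohomology class one lands in, and the non-vanishing condition becomes the fractional-part condition \eqref{TheLastMistake2017}, namely $\{\frac{\check\beta_{b_{2e}}+1}{d}\}+\{\frac{\check\beta_{b_{2e+1}}+1}{d}\}=1$. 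The value then factors as a sign ${\rm sign}(b)(-1)^{n/2}$, a normalizing power of $d$ and a factorial $(k-1)!$ absorbed into the constant $C$ and into $(k)_{|a|}$, times the root-of-unity phase $e^{\pi\sqrt{-1}E_{\check\beta}}$ with $E_{\check\beta}=\sum_e\{\frac{\check\beta_{b_{2e}}+1}{d}\}(1+2a_{2e+1})$, times the Pochhammer product $D_{\check\beta}=\prod_i(\{\frac{\check\beta_i+1}{d}\})_{[\frac{\check\beta_i+1}{d}]}$. The latter product is precisely what the residue-of-higher-pole computation yields: reducing a pole of order $k+|a|$ down to the cohomologically relevant order produces exactly the ascending factorials of the fractional parts, with length given by the integer parts $[\frac{\check\beta_i+1}{d}]$. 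I would verify that the combinatorial constant $(k)_{|a|}/a!$ from the binomial expansion, combined with $C=(k-1)!\,{\rm sign}(b)(-1)^{n/2}d^{n/2+1}$ out front and the factorials hidden in the trace/residue normalization, collapses to the stated $\frac{1}{a!}D_{\beta+a^*}e^{\pi\sqrt{-1}E_{\beta+a^*}}$.

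The main obstacle I anticipate is the precise identification of the residue of a form with pole order $k+|a|$ in terms of Pochhammer symbols of fractional parts — that is, justifying the factor $D_{\check\beta}$ and the exact length $[\frac{\check\beta_i+1}{d}]$ of each ascending factorial, uniformly in $|a|$. This requires either an induction on the pole order using the Griffiths reduction of pole order relations modulo the Jacobian ideal of $f_0=\sum x_i^d$ (whose monomial basis makes the reduction completely explicit for the Fermat polynomial), or a direct Gamma-function/Beta-integral evaluation of the Fermat periods of forms of arbitrary pole order along the linear cycle, analytically continuing the Carlson--Griffiths integral. I would favor the second route: parametrize a neighborhood of $\licy{n/2}_{a,b}$, reduce the integral to a product of one-variable integrals of the form $\int x^{p}(1-x^d)^{-q}dx$ which evaluate to Beta values $\frac{1}{d}B(\frac{p+1}{d},\,\cdot)$, and then use the reflection/multiplication formulas for $\Gamma$ to extract the fractional parts, the Pochhammer tails, and the phase $e^{\pi\sqrt{-1}E}$. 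Convergence of the resulting multivariate series for $t$ in a small polydisc, and the interchange of sum and integral, are then routine majorization estimates, since only finitely many $a$ contribute to each total degree $|a|$ and the non-vanishing condition \eqref{TheLastMistake2017} further sparsifies the sum.
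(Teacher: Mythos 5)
Your strategy is essentially the one the paper itself points to: the paper gives no self-contained argument but attributes the proof to ``a careful analysis of the Gauss--Manin connection of the full family of hypersurfaces around the Fermat point'' in \cite{ho13}, \S 13.9, 13.10, 17.11, and your binomial expansion of $f_t^{-k}$ is that analysis in disguise, since $\nabla_{\partial/\partial t_\alpha}\Resi\left(x^\beta\Omega/f_t^{k}\right)=\Resi\left(k\,x^{\beta+\alpha}\Omega/f_t^{k+1}\right)$, so the coefficient of $t^a$ is $\frac{(k)_{|a|}}{a!}$ times the period of $x^{\beta+a^*}\Omega/f_0^{k+|a|}$ over $\delta_0$. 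Your bookkeeping is also consistent: $(k-1)!\,(k)_{|a|}=(k+|a|-1)!$, and in the minimal pole-order case $k'=\frac{n}{2}+1$, $0\leq\check\beta_i\leq d-2$, one gets $D_{\check\beta}=1$, $e^{\pi\sqrt{-1}E_{\check\beta}}=\zeta_{2d}^{\epsilon}$ and condition \eqref{TheLastMistake2017} reduces to $i_{b_{2e}}+i_{b_{2e+1}}=d-2$, recovering Theorem \ref{14out2016} on the nose.

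The genuine gap is the one you flag yourself, and it is not peripheral: Theorem \ref{14out2016} only evaluates periods of forms of pole order $\frac{n}{2}+1$ with exponents in $\{0,\dots,d-2\}$, whereas every nonconstant Taylor coefficient requires the period of $x^{\check\beta}\Omega/f_0^{k+|a|}$ with $|a|$ arbitrarily large. All of the actual content of the theorem --- the Pochhammer factors $D_{\check\beta}$, the lengths $\left[\frac{\check\beta_i+1}{d}\right]$, and the passage from $i_{b_{2e}}+i_{b_{2e+1}}=d-2$ to the fractional-part condition \eqref{TheLastMistake2017} --- lives in this higher-pole-order evaluation, and you name two candidate methods (Griffiths' pole reduction modulo the Jacobian ideal of $\sum x_i^d$, or a direct Beta/Gamma computation along the linear cycle) without carrying either out; until one of them is executed, the formula is only verified to be \emph{consistent} with Theorem \ref{14out2016}, not proved. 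A second, smaller point that needs an honest sentence: the claim that each term $\int_{\delta_t}\Resi\left(x^{\check\beta}\Omega/f_0^{k'}\right)$ is constant in $t$ is not a statement about flat sections (the form lives on $\P^{n+1}\setminus X_0$, not on $X_t$); it requires realizing the period as $\frac{1}{2\pi\sqrt{-1}}\int_{\tau_t(\delta_t)}x^{\check\beta}\Omega/f_0^{k'}$ over the Leray tube and checking that for $t$ small the tube class stays in, and is locally constant in, $H_{n+1}(\P^{n+1}\setminus X_0,\Z)$ --- routine, but it is the step that legitimizes exchanging the series with the integral before any majorization estimate is relevant.
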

This theorem is the outcome of many computations in \cite{ho13}. Its proof is obtained after a careful analysis of the Gauss-Manin
connection of the full family of hypersurfaces around the Fermat point $0\in\T$. For thus see Sections 13.9, 13.10, 17.11 of this book. 
In the next paragraph we are going to explain how to use Theorem \ref{InLabelNadasht?} and give evidences for Conjecture \ref{Duisburg2016}.

Recall the definition of the Hodge locus as an scheme in \eqref{10maio16}. 
Let $f_1,f_2,\cdots,f_a\in \O_{\T,0}$ be the integrals such that
$f_1=f_2=\dots=f_a=0$ is the underlying analytic variety of the Hodge locus $V_{\delta_0}$. 
We take $f_1,f_2,\ldots, f_k,\ \ k\leq a$ 
such that the linear part of $f_1,f_2,\ldots, f_k$ form a basis of the vector space generated 
by the linear part of all $f_1,f_2,\ldots, f_a$. By Griffiths transversality those of $f_i$ which come from
$F^{\frac{n}{2}+2}H^n_\dR(X_0)$ have zero linear part and so only $F^{\frac{n}{2}+1}/ F^{\frac{n}{2}+2}$ part of the
cohomology contribute to the mentioned vector space, see for instance \cite{ho13} Section 16.5.   

The Hodge locus $V_{\delta_0}$ is smooth and reduced if and only if the two ideals 
$\langle f_1,f_2,\ldots,f_k\rangle$ and $\langle f_1,f_2,\ldots,f_a\rangle$ in $\O_{\T,0}$ are the same. 
For this we have to check 
\begin{equation}
 f\in \langle f_1,f_2,\ldots,f_k,\rangle \ \hbox{ for } f=f_i,\ \ i=k+1,\ldots, a,   
\end{equation}
or equivalently 
\begin{equation}
\label{gavrilov2017}
f=\sum_{i=1}^k f_ig_i,\ \ g_i\in \O_{\T,0}. 
\end{equation}
Let $f=\sum_{j=1}^\infty f_i,\ \ f_i=\sum_{j=1}^\infty f_{i,j}, g_i=\sum_{j=0}^\infty g_{i,j}$ 
be the homogeneous decomposition of $f$, $f_i$ and $g_i$, respectively.
The identity \eqref{gavrilov2017} reduces to infinite number of polynomial identities:
\begin{eqnarray}
\label{15june2017}
f_1 &=& \sum_{i=1}^k f_{i,1}g_{i,0}, \\ \nonumber
f_2 &=& \sum_{i=1}^k f_{i,2}g_{i,0}+ \sum_{i=1}^k f_{i,1}g_{i,1}, \\ \nonumber
\vdots & & \vdots \\ \nonumber
f_j &=& \sum_{i=1}^k f_{i,j}g_{i,0}+ \sum_{i=1}^k f_{i,j-1}g_{i,1}+\cdots+  \sum_{i=1}^k f_{i,1}g_{i,j-1}. 
\end{eqnarray}
\begin{defi}\rm
 For a Hodge locus $V_{\delta_0}$ as in \eqref{10maio16} and $N\in\N$ we say that it is $N$-smooth if the first 
 $N$ equations in \eqref{15june2017} holds for all $f=f_i, i=k+1,k+2,\cdots,a$. 
 In other words \eqref{gavrilov2017} holds up to monomials of degree $\geq N+1$. 
\end{defi}
By definition a Hodge locus $V_{\delta_0}$ is $1$-smooth.
Theorem \ref{WillCome2017} and Theorem \ref{dream2017}, and in particular their computational proof, 
must be considered our strongest evidence to Conjecture \ref{Duisburg2016}. 
\begin{proof}[Proof of Theorem \ref{WillCome2017} and Theorem \ref{dream2017}  ]
\href
{http://w3.impa.br/~hossein/WikiHossein/files/Singular%20Codes/2017-09-ReducedSmoothOutputFinal}
{The proof is done using a computer implementation of the Taylor series \eqref{15.12.16}.}
 \footnote{See {\tt SmoothReduced} and {\tt TaylorSeries}.}
 In order to be sure that this Taylor series and its computer implementation are mistake-free we 
 have also checked many $N$-smoothness property which are already proved in Theorem \ref{27.03.17}. 
 In Theorem \ref{dream2017} Item 1 we have proved that the corresponding 
 Hodge locus is not $2$-smooth except
 in the following case which we highlight it. 
 Let $\P^1$ and $\check\P^1$ be two lines in the Fermat quintic surface intersecting 
 each other in a point. The Hodge locus $V_{\cf\P^1+\check\cf\check\P^1}$  for all $\cf,\check\cf\in \Z$ is $2$-smooth. Moreover it
 is not $3$-smooth for  $0<|\cf|<|\check\cf|\leq 10$. In Theorem \ref{dream2017} Item 2 (resp. 3) 
 we have proved that  the corresponding Hodge locus is not $3$-smooth (resp. $4$-smooth). 
 
%
%
\end{proof}
The property of being $N$-smooth for larger $N$'s is out of the 
capacity of my computer codes, see \S\ref{10.08.2017} for some comments.

%
%

\section{Uniqueness of components of the Hodge locus}
\def\Ho{{\sf Ho}}
\label{25.03.2017}
A Hodge cycle $\delta\in H_n(X^d_n,\Z)$ is uniquely determined by its periods $\pn_i(\delta)$.
This data gives the Poincar\'e dual of $\delta$ in cohomology, and hence, the 
classical Hodge class in the literature. Let $\Ho_n^d$ be  the $\Z$-module of period vectors $\pn$ of Hodge cycles.
We will also use its projectivization $\P \Ho^d_n$ (two elements $\pn$ and $\check\pn$ in the $\Z$-module are the same if 
there are non-zero integers $a$ and  $\check a$ such that $a\pn=\check a\check\pn$). 
This $\Z$-module can be described in an elementary linear algebra context without referring to advanced topics, such
as homology and algebraic de Rham cohomology, see Chapter 16 of \cite{ho13}. 
Therefore, the conjectures of the present section can be understood by 
any undergraduate mathematics student!
If 
either $d$ is a prime number or $d=4$ or $d$ is relatively prime with $(n + 1)!$ then  
we may redefine  $\Ho_d^n$ the $\Z$-modules generated by $\pn^{a,b}$, where  
$$
\pn^{a,b}_i:=
\left\{
	\begin{array}{ll}
		  
		   \zeta_{2d}^{\sum_{e=0}^\frac{n}{2} (i_{b_{2e}}+1)\cdot (1+2a_{2e+1}) }  & \mbox{if } \ \ \  
		   \ i_{b_{2e-2}}+i_{b_{2e-1}}=d-2, \ \ \ \forall e=1,...,\frac{n}{2}+1, \\
		0 & \mbox{otherwise. } 
	\end{array}
\right.
$$
and $a$ and $b$ are as in \eqref{6apr2017}. 
By  Theorem \ref{14out2016} and  Theorem \ref{24.07.2016} this will be 
a sub $\Z$-module of the $\Ho^d_n$ defined earlier. This will not modify our discussion below. 
\footnote{The list of $\pn^{a,b}$'s is implemented in the procedure {\tt ListPeriodLinearCycle}.} 
Recall the matrix
$[\pn_{i+j}]$ in Definition \ref{05/04/2017} and  the map $\P\Ho_n^d\to \N, \pn \mapsto \rank([\pn_{i+j}])$. 
If $\pn\not=0$ then
 \begin{equation}
 \label{happyandsad}
\binom{\frac{n}{2}+d}{d}-(\frac{n}{2}+1)^2 \leq {\rank}([\pn_{i+j}])\leq 
\left\{
\begin{array}{ll}
\binom{\frac{n}{2}d-1}{n+1} & \hbox{ if } d<\frac{2(n+1)}{n-2},\\
 \binom{d+n}{n+1}-(n+2) &     \hbox{ if } \ d=\frac{2(n+1)}{n-2},\\
\binom{d+n+1}{n+1}-(n+2)^2 &     \hbox{ if }  \ d > \frac{2(n+1)}{n-2}.
\end{array}\right.
 \end{equation}
 Before stating our main conjecture in this section, let us state a simpler one. 
 \begin{conj}
 \label{1apr2017}
  Let $n\geq 2$ be an even number and $d\geq 3$ an integer with $(n,d)\not=(2,4),(4,3)$. 
  Let also  $\pn\in \Ho^d_n$ such that 
  \begin{equation}
  \label{29.03.2018}
   \rank([\pn_{i+j}])=\binom{\frac{n}{2}+d}{d}-(\frac{n}{2}+1)^2.
  \end{equation}
Then $\pn$, up to multiplication by a rational number,  is necessarily of the form $\pn^{a,b}$. 
 \end{conj}
One can also formulate a similar conjecture for the next admissible rank.
For $n=2$ Voisin's result in \cite{voisin1988} tells us that this must be 
$2d-7:=\codim(\T_{1,2})$. For further discussion on this topic see \cite{ho13} Chapter 19.
It might happen that in Conjecture \ref{1apr2017} one must exclude more examples of $(n,d)$.
Note that for $(n,d)=(2,4),(4,3)$ both sides of \eqref{29.03.2017} are equal to one for all non-zero $\pn$. 


We need to write
down in an elementary language when the linear cycles $\P^\frac{n}{2}_i$ and $\check \P^ \frac{n}{2}_j$ underlying
two period vectors $\pn^i,\ i=(a,b)$ and $\check\pn^j,\ j=(\check a,\check b)$, respectively, have the intersection $\P^m$. This is as follows:
A bicycle attached to the permutations $b$ and $\check b$ is a sequence $(c_1c_2\ldots c_r)$ with 
$c_i\in\{0,1,2,\ldots,n+1\}$ and such
that if we define $c_{r+1}=c_1$ then for $1\leq i\leq r$ odd (resp. even) there is an even number $k$ with $0\leq k\leq n+1$  such that 
$\{c_i,c_{i+1}\}=\{b_{k},b_{k+1}\}$  (resp. $\{c_i,c_{i+1}\}=\{\check b_{k},\check b_{k+1}\})$) and  
there is no repetition
among $c_i$'s. By definition there is a sequence of even numbers $k_1,k_2,\cdots$ such that 
$$
\{c_1,c_2\}=\{b_{k_1},b_{k_1+1}\}, \ \ \{c_2,c_3\}=\{\check b_{k_2},\check b_{k_2+1}\},\ \  \{c_3,c_4\}=\{b_{k_3},b_{k_3+1}\},\ldots.
$$
Bicycles are defined up to twice shifting $c_i$'s, that is, $(c_1c_2c_3\cdots c_r)=(c_3\cdots c_rc_1c_2)$ etc., and the involution 
$(c_1c_2c_3\cdots c_{r-1}c_r)=(c_rc_{r-1}\cdots c_3c_2c_1)$. 
For example, for the permutations 
$$
b=(0,1,2,3,4,5), \ \check b=(1,0,5,3,4,2)
$$
we have in total two bicycles $(01), (2354)$. Note that bicycles give us in a natural way 
a partition of $\{0,1,\ldots,n+1\}$. 
For such a bicycle we define its conductor 
to be the sum over $k$, as before, of the following elements:
if $c_i=b_k$ and $c_{i+1}=b_{k+1}$ (resp. $c_i=\check b_k$ and $c_{i+1}=\check b_{k+1}$) then the element 
$1+2a_{k+1}$ (resp. $1+2\check a_{k+1}$), 
and if  if $c_i=b_{k+1}$ and $c_{i+1}=b_{k}$ (resp. $c_i=\check b_{k+1}$ and $c_{i+1}=\check b_{k}$)    
then $-1-2a_{k+1}$ (resp. $-1-2\check a_{k+1}$). Because of the involution, the conductor 
is defined up to sign. In our example, the
conductor of $(01)$ and $(2354)$ are respectively given by 
$$
1+2a_1+1+2\check a_1,\ \  
1+2a_3-1-2\check a_3-1-2a_5+1+2\check a_5.
$$ 
A bicycle is called
new if $2d$ divides its conductor, and is called old  otherwise.
Let $m_{ij}$ be the number
of new bicycles attached to $(i,j)$ minus one.
\begin{conj}
\label{TrainFreiburgBonn}
 Let $n\geq 4$ and $d>\frac{2(n+1)}{n-2}$. If for some  $\pn\in\Ho^d_n,\ \ \pn\not=0$ we have  
 $$
 \rank[\pn_{i+j}]\leq \mydim^d_n(\frac{n}{2}-2),
 $$
 then $\pn$ after multiplication with a natural number is in the set
 \begin{enumerate}
  \item 
 $\Z\pn^{a,b}$ and so $\rank([\pn_{i+j}])=
 \bn{\frac{n}{2}+d}{d}-(\frac{n}{2}+1)^2$.
  \item
  $\Z\pn^{a,b}+\Z\pn^{\check a,\check b}$ with $m_{a,b,\check a,\check b}=\frac{n}{2}-1$ and
  so $\rank([\pn_{i+j}])= \codnum_{1^{\frac{n}{2}},2,(d-1)^{\frac{n}{2}},d-2}$.
  \item
 $\Z\pn^{a,b}+\Z\pn^{\check a,\check b}$ with $m_{a,b,\check a,\check b}=\frac{n}{2}-2$ and
 so $\rank[\pn_{i+j}]= \mydim^d_n(\frac{n}{2}-2)$.
 \end{enumerate}  
\end{conj}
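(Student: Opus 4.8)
The plan is to turn the statement into a finite classification of configurations of linear cycles and then to reduce, via the invariants already computed in the paper, to the two-cycle situation. First I would recall the lower bound in \eqref{happyandsad}: any nonzero $\pn\in\Ho^d_n$ satisfies $\rank([\pn_{i+j}])\geq \binom{\frac{n}{2}+d}{d}-(\frac{n}{2}+1)^2$, and the minimal value is attained exactly on the lines $\Z\pn^{a,b}$, which is the content of Conjecture \ref{1apr2017} and yields Item 1. Since $d>\frac{2(n+1)}{n-2}$ forces $d$ to be large compared to $n$, in the admissible range one may assume $d$ prime, $d=4$, or $d$ coprime to $(n+1)!$, so that Theorem \ref{24.07.2016} lets us write $\pn=\sum_k \cf_k\,\pn^{a^{(k)},b^{(k)}}$, a finite $\Z$-combination of periods of linear cycles, and Theorem \ref{14out2016} gives the entries of $[\pn_{i+j}]$ for each summand explicitly. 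The conjecture then becomes: among such combinations, those with $\rank([\pn_{i+j}])\le\mydim^d_n(\frac{n}{2}-2)$ are supported, up to scaling, on at most two linear cycles, with the intersection invariant $m_{ij}$ (the number of new bicycles minus one) at least $\frac{n}{2}-2$.

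The combinatorial core is to show that the rank grows with the ``spread'' of the support. Using the vanishing pattern of Theorem \ref{14out2016} (a period $\pn^{a,b}_i$ is nonzero only when $i_{b_{2e-2}}+i_{b_{2e-1}}=d-2$ for all $e$) the matrix $[\pn_{i+j}]$ of a configuration decomposes along the partition of $\{0,1,\ldots,n+1\}$ into bicycles, and the Koszul-complex computation behind Theorem \ref{chilenos2017} lets one bound below the rank contributed by each bicycle block by the single-cycle atom $\binom{\frac{n}{2}+d}{d}-(\frac{n}{2}+1)^2$, with the contributions of distinct blocks essentially independent. A careful bookkeeping should then give: unless the support collapses to at most two linear cycles and every bicycle between them is short, the total rank already exceeds $\mydim^d_n(\frac{n}{2}-2)$. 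Here the hypothesis $d>\frac{2(n+1)}{n-2}$ is used to guarantee that $\#I_d$ is the governing dimension and that each genuinely new direction strictly raises the rank.

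It remains to treat $\pn$ proportional to $\cf\pn^{a,b}+\check\cf\pn^{\check a,\check b}$ with $\P^{\frac{n}{2}}_{a,b}\cap\P^{\frac{n}{2}}_{\check a,\check b}=\P^{m}$ and $m=m_{ij}$. By Theorem \ref{01.j.2017} (in the form, needed here, valid for arbitrary pairs of linear cycles rather than only the normalized pair of the introduction) the rank does not depend on the nonzero ratio $\cf/\check\cf$, so it equals $\mydim^d_n(m)$; by Theorem \ref{chilenos2017}, the inequality \eqref{13mar2017}, the conjectural identity $\mydim^d_n(\frac{n}{2}-1)=\codnum_{1^{\frac{n}{2}},2,(d-1)^{\frac{n}{2}},d-2}$ and the value of $\mydim^d_n(\frac{n}{2}-2)$ in Conjecture \ref{21m2017}, the sequence $m\mapsto\mydim^d_n(m)$ is non-increasing and becomes strictly larger than $\mydim^d_n(\frac{n}{2}-2)$ as soon as $m\le\frac{n}{2}-3$. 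Hence $\rank([\pn_{i+j}])\le\mydim^d_n(\frac{n}{2}-2)$ forces $m\in\{\frac{n}{2},\frac{n}{2}-1,\frac{n}{2}-2\}$, which are Items 1, 2, 3 respectively, the case $m=\frac{n}{2}$ meaning that the two cycles coincide so $\pn\in\Z\pn^{a,b}$. For the concrete pairs $(n,d)$ in the author's reach this last chain of (in)equalities is checked directly by the {\sc Singular} procedures that compute $\mydim^d_n(m)$ and $\codnum_{\underline a}$.

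The main obstacle is that several ingredients above are themselves open or only verified computationally: Conjecture \ref{1apr2017} (the atom of the stratification and Item 1 in its strong form), the identity $\mydim^d_n(\frac{n}{2}-1)=\codnum_{1^{\frac{n}{2}},2,(d-1)^{\frac{n}{2}},d-2}$ together with Conjecture \ref{21m2017} (the monotonicity thresholds), and the extension of Theorem \ref{01.j.2017} to general pairs. Beyond these, the genuinely new difficulty is the uniform lower bound for the rank of a configuration supported on three or more linear cycles: one has to rule out accidental rank drops forced by the arithmetic of the abelian extension of $\Q(\zeta_d)$ in which the periods live, and, for $d$ outside the arithmetic range of Theorem \ref{24.07.2016}, one cannot even assume that a small-rank Hodge class is an integral combination of linear cycles, since the Hodge conjecture for $X^d_n$ is not known there. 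For this reason I expect a full proof to remain, for now, the structural reduction above combined with a case-by-case computer verification for each admissible $(n,d)$.
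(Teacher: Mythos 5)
You should first be aware that the statement you are proving is labelled a \emph{conjecture} in the paper: the author offers no proof of Conjecture \ref{TrainFreiburgBonn}, only a partial confirmation strategy. That strategy is much more modest than yours: take three distinct vectors $\pn^1,\pn^2,\pn^3$ of the form $\pn^{a,b}$ and verify computationally that $\rank([\pn_{i+j}])>\mydim^d_n(\frac{n}{2}-2)$ for $\pn=\pn^1+\pn^2+\pn^3$, i.e.\ inequality \eqref{hanooztalash2017}; this amounts to $\bn{N}{3}$ rank computations with $N$ as in \eqref{28.06.2017}, and the author only carried it out for samples at $(n,d)=(4,6)$. Your overall architecture (reduce to configurations of linear cycles, rule out supports of size $\geq 3$, classify the two-cycle case by the intersection invariant $m$) is the same skeleton, fleshed out with more structure; the honest list of unproven ingredients at the end is accurate and matches the paper's own dependence on Conjectures \ref{1apr2017}, \ref{colombianos2017} and \ref{21m2017}.

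Two concrete problems in your reduction, however. First, the claim that $d>\frac{2(n+1)}{n-2}$ lets you ``assume $d$ prime, $d=4$, or $d$ coprime to $(n+1)!$'' and hence invoke Theorem \ref{24.07.2016} is false: for $n=10$ the degree bound only requires $d\geq 3$, and $(n,d)=(10,6)$ satisfies it while $6$ is neither prime, nor $4$, nor coprime to $11!$. The inability to write an arbitrary low-rank $\pn\in\Ho^d_n$ as a $\Z$-combination of the $\pn^{a,b}$ outside the arithmetic range of Theorem \ref{24.07.2016} is precisely why the paper hedges in \S\ref{27.07.2017} between ``the cases in Theorem \ref{24.07.2016}'' and ``assuming Conjecture \ref{TrainFreiburgBonn} for $\Ho^d_n$ being the lattice of periods of all Hodge cycles''; you concede this in your last paragraph, but your first paragraph builds on the false implication. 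Second, the proposed ``bicycle block'' decomposition with essentially independent rank contributions is not substantiated and cannot hold uniformly: the paper records that \eqref{hanooztalash2017} fails for $(n,d)=(4,4)$ and $(6,3)$, and that for $d=3$ the sum $\pn^1+\pn^2+\pn^3$ can even vanish, so any lower bound of this type must use the largeness of $d$ (beyond the stated $d>3$) in an essential, as yet unidentified way. As it stands your text is a reasonable strategy document, not a proof, and should be presented as such.
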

A complete analysis of Conjecture \ref{TrainFreiburgBonn} would require an intensive search for
the elements $\pn\in\Ho^d_n$ of low $\rank([\pn_{i+j}])$. 
It might be true for $n=2$ and large $d$'s, and 
this has to do with the Harris-Voisin
conjecture, see \cite{GMCD-NL}, and will be discussed somewhere else. Note that the numbers in items 1,2,3 of
Conjecture \ref{TrainFreiburgBonn} for $n=2$ are respectively $d-3,2d-7$ and $2d-6$ (for the last one see 
Conjecture \ref{emilio2017}). 
We just content ourselves with the following strategy  for confirming Conjecture \ref{TrainFreiburgBonn}.   
Let  $\pn^i,\ i=1,2,3$ be three distinct vectors of the 
form $\pn^{a,b}$. We claim that for $d>3$ we have  
\begin{equation}
\label{hanooztalash2017}
\rank([\pn_{i+j}])>\mydim^d_n(\frac{n}{2}-2), \hbox{ where }  \pn=\pn^1+\pn^2+\pn^3. 
\end{equation}
The number $\mydim^d_n(\frac{n}{2}-2)$ is  computed in \S\ref{eid1396} and
so we check in total $\bn{N}{3}$ inequalities \eqref{hanooztalash2017}, where $N$ is the
number of $\pn^{a,b}$'s in \eqref{28.06.2017}. 
This is too many computations and we have checked \eqref{hanooztalash2017}
for samples of $\pn^i$'s for $(n,d)=(4,6)$. In this way we have also observed that the lower bound 
for $d$ is necessary as \eqref{hanooztalash2017} is not true for our  favorite examples $(n,d)=(4,4),(6,3)$.
For $d=3$, the vector $\pn$ in \eqref{hanooztalash2017} can be zero. 
\footnote{For this computations we have used the  procedure {\tt SumThreeLinearCycle}.}

The final ingredient of Conjecture \ref{CanISayFinally?} is the following. In virtue of Theorem \ref{main1.5}, it compares the Zariski tangent
spaces of components of the Hodge locus passing through the Fermat point. 
\begin{conj}
\label{13jan2017}
Let $n\geq 6$ and $d\geq 3$. There is no inclusion between any two vector spaces of the form  
\begin{equation}
\label{27.06.17}
\ker([\cf\pn_{i+j}+\check \cf\check\pn_{i+j}])
\end{equation}
where $\pn$ and $\check \pn$ ranges in the set of all $\pn^{a,b}$ with $m_{a,b,\check a, \check b}=\frac{n}{2}-1,
\frac{n}{2}-2$, $\cf,\check\cf\in \Z$ coprime and $m_{a,b,\check a, \check b}=\frac{n}{2}, \cf=1,\check\cf=0$. 
\end{conj}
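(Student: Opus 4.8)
To establish Conjecture \ref{13jan2017} for a fixed $(n,d)$ with $n\geq 6$, $d\geq 3$, the plan is to turn it into a finite, exact linear-algebra computation over the cyclotomic field $\Q(\zeta_{2d})$ and to isolate the one structural input needed for the unbounded range of the coefficients $\cf,\check\cf$. First I would replace every period vector occurring in the conjecture by one of the explicit vectors $\pn^{a,b}$ of \S\ref{25.03.2017}: by Theorem \ref{24.07.2016} the $\Z$-module $\Ho^d_n$ is generated by the classes of linear cycles whenever $d$ is prime, $d=4$, or $\gcd(d,(n+1)!)=1$, and in the remaining degrees one works inside the sub-$\Z$-module generated by the $\pn^{a,b}$, which already contains all vectors appearing in the conjecture. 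Theorem \ref{14out2016} writes each $\pn^{a,b}$ in closed form, so every matrix $[\cf\pn_{i+j}+\check\cf\check\pn_{i+j}]$ --- rows indexed by $I_{\frac{n}{2}d-n-2}$, columns by $I_d$ --- is completely explicit with entries in $\Z[\zeta_{2d}]$, and by Theorem \ref{main1.5} its kernel is the Zariski tangent space at the Fermat point of the Hodge locus $V_{\cf[\P^{\frac{n}{2}}]+\check\cf[\check\P^{\frac{n}{2}}]}$. In these terms the conjecture asserts that, among the finite list formed by the tangent spaces $T_{a,b}=\ker[\pn^{a,b}_{i+j}]$ of the (smooth, by Theorem \ref{main2}) linear-cycle loci and the tangent spaces attached to $m_{a,b,\check a,\check b}=\frac{n}{2}-1$ and $\frac{n}{2}-2$ with coprime $\cf,\check\cf$, no member is contained in another.

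To make this list finite I would read the multiplicity $m_{a,b,\check a,\check b}$ --- the dimension of $\P^{\frac{n}{2}}_{a,b}\cap\P^{\frac{n}{2}}_{\check a,\check b}$ --- off the new and old bicycles of the two permutations, as in the discussion before Conjecture \ref{TrainFreiburgBonn}; this picks out the finitely many ordered pairs with $m_{a,b,\check a,\check b}\in\{\frac{n}{2},\frac{n}{2}-1,\frac{n}{2}-2\}$. The automorphism group $\group^d_n$ acts on $\Ho^d_n$ and carries each matrix $[\cf\pn_{i+j}+\check\cf\check\pn_{i+j}]$ to another of the same shape by a fixed invertible substitution of the column coordinates, hence preserves kernels up to that substitution and preserves all inclusions; I would therefore replace the pairs by a set of $\group^d_n$-orbit representatives, which cuts the number of cases sharply. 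For each representative quadruple $(\pn^{a,b},\pn^{\check a,\check b},\cf,\check\cf)$ I would compute a basis of $\ker[\cf\pn_{i+j}+\check\cf\check\pn_{i+j}]$ over $\Q(\zeta_{2d})$ by Gaussian elimination, and for each ordered pair $(U,W)$ in the resulting list test $U\subseteq W$ by checking $\dim U\leq\dim W$ and that every basis vector of $U$ annihilates the rows defining $W$, which is a rank computation. A first sieve is dimension: by Theorem \ref{chilenos2017} and the values of $\mydim^d_n$ and $\intdim^d_n$ (known in the relevant cases, see \S\ref{eid1396}) the three multiplicity classes carry the three codimensions $\codnum_{1^{\frac{n}{2}},(d-1)^{\frac{n}{2}}}$, $\codnum_{1^{\frac{n}{2}},2,(d-1)^{\frac{n}{2}},d-2}$ and $\mydim^d_n(\frac{n}{2}-2)$, so most candidate inclusions are excluded numerically and only genuine strict inclusions of a lower-dimensional tangent space into a higher-dimensional one remain for a hands-on check.

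The real obstacle is that the conjecture ranges over all coprime integer pairs $(\cf,\check\cf)$, whereas the computation above --- exactly as in the proofs of Theorems \ref{WillCome2017} and \ref{dream2017} --- treats only $|\cf|,|\check\cf|$ in a bounded box. For a fixed cycle pair with $m_{a,b,\check a,\check b}=m\in\{\frac{n}{2}-1,\frac{n}{2}-2\}$, writing $A=[\pn^{a,b}_{i+j}]$ and $B=[\pn^{\check a,\check b}_{i+j}]$, I would use the following dichotomy. By Theorem \ref{01.j.2017} (for the $(n,d)$ of Theorem \ref{27.03.17}; in general one first needs the analogous rank-stability) every member $\cf A+\check\cf B$ with $\cf,\check\cf$ both nonzero has rank $\mydim^d_n(m)$, so all these kernels share the dimension $\#I_d-\mydim^d_n(m)$, each contains $\ker A\cap\ker B$, and $\ker A\cap\ker B$ is moreover their common intersection over non-axis ratios (two distinct ratios already force a common kernel vector to be killed by $A$ and by $B$ separately, via a $2\times 2$ linear system). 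Consequently, if two non-axis ratios give the same kernel $K$, then $K=\ker A\cap\ker B$ and, by equality of dimensions, every member equals $\ker A\cap\ker B$: the family has collapsed to a single subspace, which happens for instance whenever $\mydim^d_n(m)=\intdim^d_n(m)$. In the opposite, moving case --- which by Conjecture \ref{21m2017} should be precisely $m=\frac{n}{2}-2$, the case feeding Conjecture \ref{CanISayFinally?} --- distinct ratios therefore give distinct, hence (equal dimension) incomparable subspaces, so no inclusion arises within a single family. The cross-comparisons remain --- a member of one family against a member of another, or against one of the finitely many $T_{a,b}$ --- and there the set of coefficient ratios realizing an inclusion cuts out a closed subvariety of $\P^1$, respectively $\P^1\times\P^1$, which is either everything (ruled out by a single generic evaluation) or proper; in the latter case one must verify that it meets none of the coprime integer ratios, which reduces to inspecting the special ratios visible in the Kronecker canonical form of each pencil $\cf A+\check\cf B$ --- in particular the two axis ratios, where the rank drops and the kernel becomes $T_{a,b}$ or $T_{\check a,\check b}$ --- together with an argument excluding a positive-dimensional component of that locus.

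I expect the genuinely delicate point to be exactly this last step --- pinning down the special ratios from the Kronecker forms and excluding a positive-dimensional locus of coincident tangent spaces --- rather than any individual rank computation; the remaining gap is the passage from a fixed $(n,d)$ to all $n\geq 6$, $d\geq 3$, which would require Theorem \ref{01.j.2017}, Conjecture \ref{colombianos2017} and the bicycle count extended to all such $(n,d)$. For any fixed $(n,d)$, however, the procedure above is a finite --- if laborious --- computation of the same nature as the proofs of Theorems \ref{27.03.17}, \ref{WillCome2017} and \ref{dream2017}, and completing it would remove one of the last hypotheses behind Conjecture \ref{CanISayFinally?}.
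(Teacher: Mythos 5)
First, a point of framing: the statement you are addressing is a conjecture, and the paper does not prove it either; what the paper offers, in the closing paragraphs of \S\ref{25.03.2017}, is a partial computational confirmation. Your proposal is correct as a verification strategy and follows the same basic route --- write every $\pn^{a,b}$ explicitly via Theorem \ref{14out2016}, identify each kernel with a Zariski tangent space via Theorem \ref{main1.5}, and reduce non-inclusion to exact rank computations over $\Q(\zeta_{2d})$ --- but it is more ambitious in two respects. The paper restricts to $\cf=\check\cf=1$ and to the $(n,d)$ of Theorem \ref{27.03.17}, tests non-inclusion by checking $\rank(A*B)>\rank(A),\ \rank(B)$ for the concatenated matrix $A*B$ (which is your basis-annihilation test in matrix form), and disposes of the unbounded coefficient range only by observing that in those cases the kernel equals the tangent space of $V_{\P^{\frac{n}{2}}}\cap V_{\check\P^{\frac{n}{2}}}$ and hence does not depend on $(\cf,\check\cf)$ at all. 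Your pencil dichotomy goes further: the observation that two distinct non-axis ratios sharing a kernel vector force that vector into $\ker A\cap\ker B$, so that in the moving case $\mydim^d_n(\frac{n}{2}-2)<\intdim^d_n(\frac{n}{2}-2)$ the kernels along a single pencil are pairwise distinct and of equal dimension, hence incomparable, is a clean argument the paper does not give, and it settles the within-family inclusions for all coprime pairs at once (granting the rank stability of Theorem \ref{01.j.2017}, which is itself only proved for the $(n,d)$ of Theorem \ref{27.03.17}). The genuinely open parts are exactly where you locate them: the cross-family comparisons for unbounded $(\cf,\check\cf)$ (your Kronecker-form step is a plausible but unexecuted plan, and one must still rule out a positive-dimensional locus of coincidences meeting infinitely many coprime ratios), and the passage from finitely many verified $(n,d)$ to all $n\geq 6$, $d\geq 3$, which no finite computation can deliver. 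The automorphism-group reduction and the dimension sieve are sensible additions that address the $N^4$ cost the paper itself complains about.
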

Let $\delta,\ \check\delta\in H_n(X^d_n,\Q)$ be two Hodge cycles with 
\begin{equation}
\label{Emanuel2017}
\ker([\pn_{i+j}(\delta)])\subset \ker([\pn_{i+j}(\check \delta)]),
 \end{equation}
that is, the Zariski tangent space of $V_{\delta}$ is contained in the Zariski tangent space of $V_{\check\delta}$.  The first
trivial example to this situation is when $\check\delta$ is a rational multiple of $[\plc]$ for which we have $\pn(\check\delta)=0$ and
$V_{\check\delta}=(\T,0)$. Let us assume that none of  $\check\delta$ and $\delta$ is a rational multiple of $[\plc]$.  
Next examples for this situation are in Theorem \ref{27.03.17}. In this theorem the Zariski tangent space 
of  the Hodge locus $V_{\cf[\P^{\frac{n}{2}}]+ \check\cf[\check \P^{\frac{n}{2}}]}=
V_{\P^{\frac{n}{2}}}\cap V_{\check \P^{\frac{n}{2}}}$, 
with  $\P^\frac{n}{2}\cap \check\P^\frac{n}{2}=\P^m$ and $\cf,\check\cf\in\Z,\ \cf\not=0,\ \check\cf\not=0$, 
at the Fermat point  does not depend on $\cf,\check\cf$. For larger $m$'s such as $\frac{n}{2}-1$, the Zariski tangent spaces of 
 $V_{\cf[\P^{\frac{n}{2}}]+ \check\cf[\check \P^{\frac{n}{2}}]} $ at the Fermat point form a pencil of 
 linear spaces and so there is no inclusion among its members. For $(n,d)=(2,4),(4,3)$, $V_{\delta}$'s are of codimension one, 
 smooth and reduced,  and so, any inclusion \eqref{Emanuel2017} will be 
 an equality and it implies that the period vectors of $\check\delta,\check\delta$ are the same. This implies that 
 $\delta=a\check\delta+b[\plc]$ for some $a,b\in\Q$, and so, $V_{\delta}=V_{\check\delta}$. 

We can verify Conjecture \ref{13jan2017} in the following way.
For simplicity we restrict ourselves to the pairs $(n,d)$ in Theorem \ref{27.03.17} and  $\cf=\check\cf=1$.  
Let us take two matrices $A$ and $B$ as inside kernel in \eqref{27.06.17}. 
Let also $A*B$ be the concatenation of $A$ and $B$ by putting the rows of $A$
and $B$ as the rows of $A*B$. Therefore, $A*B$ is a 
$(2\#I_{\frac{n}{2}d-n-2})\times (\# I_d)$ matrix. 
In order to prove that there is no inclusion between $\ker(A)$ and $\ker(B)$ it
is enough to prove that 
\begin{equation}
\label{pilar2017}
\rank(A*B)>\rank(A),\ \rank(B).
\end{equation}
The number of  verifications \eqref{pilar2017}  is approximately
$N^4$, where $N$ is the number of linear cycles given in \eqref{28.06.2017}. 
This is a huge number  even for small values of $n$ and $d$.\footnote{For this proof we have used 
{\tt DistinctHodgeLocus}. 
}
Note that the vector space in \eqref{27.06.17} for $(n,d,m)$'s in  Theorem \ref{27.03.17} 
is equal to the Zariski tangent space of $V_{\P^{\frac{n}{2}}}\cap V_{\check\P^{\frac{n}{2}}}$ at the Fermat point, and
hence it does not depend on $\cf$ and $\check\cf$. This is the main reason why we restrict ourselves to the cases in 
Conjecture \ref{13jan2017}.

\section{Semi-irreducible algebraic cycles}
\label{17agu2017}
Let $X$ be a smooth projective variety and $Z=\sum_{i=1}^r n_i Z_i,\ \ n_i\in\Z$ be an algebraic cycle in $X$, 
with $Z_i$ an irreducible subvariety of codimension $\frac{n}{2}$ in $X$.  
The following definition  is done using analytic deformations 
and it would not be hard to state it in the algebraic context. 
\begin{defi}\rm
 We say that $Z=\sum_{i=1}^r n_i Z_i,\ \ n_i\in\Z$ is semi-irreducible if there is a smooth analytic variety $\cal X$, an irreducible  subvariety ${\cal Z}\subset
 {\cal X}$ of codimension $\frac{n}{2}$ (possibly singular), 
 a holomorphic map  $f:{\cal X}\to (\C,0)$ such that
 \begin{enumerate}
  \item 
 $f$  is smooth and proper over $(\C,0)$ with $X$ as a fiber over $0$. Therefore, all the fibers  $X_t$ of $f$
 are $C^\infty$ isomorphic to $X$. 
  \item 
  The fiber $Z_t$ of $f|_{\cal Z}$ over $t\not=0$ is irreducible and $Z_0=\cup_{i=1}^r Z_i$.    
\item 
\label{17.08.2017}
The homological cycle $[Z]:=\sum_{i=1}^r n_i[Z_i]\in H_n(X,\Z)$ is the monodromy of $[Z_t]\in H_n(X_t,\Z)$.  
\end{enumerate}
\end{defi}
It is reasonable to expect that Item \ref{17.08.2017} is equivalent to a geometric phenomena, purely expressible in terms
of degeneration of algebraic varieties. For instance, one might expect that $n_i$ layers of the algebraic cycle $Z_t$ accumulate
on $Z_i$, and hence semi-irreducibility implies the positivity of $n_i$'s. Moreover,  for distinct 
$Z_i$ and $Z_j$, the intersection $Z_i\cap Z_j$ is of codimension one in both $Z_i$ and $Z_j$, 
because $Z_i$'s are irreducible and of 
codimension one in  $\cal Z$. In particular, the algebraic cycle 
$\cf\P^\frac{n}{2}+\check\cf\check\P^\frac{n}{2}$ with  $\P^\frac{n}{2}\cap \check\P^\frac{n}{2}=\P ^m,\ \ m\leq \frac{n}{2}-2$
in Conjecture \ref{CanISayFinally?} is not semi-irreducible.

A smooth hypersurface of degree $d$ and dimension $n$ has the Hodge numbers 
$h^{n,0}=h^{n-1,1}=\cdots=h^{\frac{n}{2}+2,\frac{n}{2}-2}=0, \ \ h^{\frac{n}{2}+1,\frac{n}{2}-1}=1$ 
if and only if $(n,d)=(2,4),(4,3)$. Recall that $\plc$ is the intersection of a linear $\P^{\frac{n}{2}+1}$ with $X^d_n$. 
The following theorem can be considered as a counterpart of Conjecture \ref{CanISayFinally?}.
\begin{theo}
\label{11Aug2017}
 Let $(n,d)=(2,4),(4,3)$ and let $Z$ be an algebraic cycle of dimension $\frac{n}{2}$ and with integer coefficients,  
 in a smooth hypersurface of dimension $n$ and degree $d$. If $[Z]\in H_n(X^d_n,\Q)$ is not a rational multiple of 
 $[\plc]$ then there is  a semi-irreducible algebraic cycle  $\check Z$ of dimension $\frac{n}{2}$  
 in $X^d_n$  such that 
 $a Z+b\check Z+c\plc$
 is homologous to zero for some $a,b,c\in\Z$ with $a,b \not=0$ .
\end{theo}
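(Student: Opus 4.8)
The plan is to reduce the statement to representing, on $X^d_n$, a suitable Hodge class by a \emph{single} semi-irreducible algebraic cycle, and then to build that cycle by a positivity argument. First I would pass from the ambient hypersurface to the Fermat: by the $C^\infty$-triviality of the universal family of smooth hypersurfaces of degree $d$ and dimension $n$ one identifies $H_n(X,\Z)\cong H_n(X^d_n,\Z)$; and since for the two pairs $(n,d)=(2,4),(4,3)$ the Fermat carries the maximal possible rank of algebraic cohomology and $H_n(X^d_n,\Z)_\prim$ contains two orthogonal hyperbolic planes, Eichler-type results on orbits of lattice vectors, together with the bigness of the monodromy group of the family, allow us to replace $[Z]$ by a monodromy translate lying in $\hodge_n(X^d_n,\Z)$. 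So we may assume $Z$ is an integral algebraic cycle of dimension $\frac n2$ on $X^d_n$ itself; put $\delta:=[Z]\in\hodge_n(X^d_n,\Z)$ and write $\delta=\delta_0+q[\plc]$ with $\delta_0\in\hodge_n(X^d_n,\Q)_\prim$ and $q=\frac{\delta\cdot[\plc]}{d}\in\Q$ (recall $[\plc]^2=d$). The hypothesis $[Z]\notin\Q[\plc]$ means exactly $\delta_0\neq 0$, and with $N:=d$ we have $N\delta_0=d\,\delta-(\delta\cdot[\plc])[\plc]\in\hodge_n(X^d_n,\Z)$.

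For $M\in\N$ set $\gamma_M:=-N\delta_0+M[\plc]\in\hodge_n(X^d_n,\Z)$; then $\gamma_M\cdot[\plc]=Md$ and $\gamma_M^2=N^2\delta_0^2+M^2d$, both positive once $M$ is large. The core claim is that for $M\gg 0$ the class $\gamma_M$ is represented by a semi-irreducible algebraic cycle $\check Z\subset X^d_n$. Granting it, $N\delta+[\check Z]-(M+Nq)[\plc]$ is homologous to zero, and since $Nq=\delta\cdot[\plc]\in\Z$ this is the asserted relation with $a:=N=d\neq 0$, $b:=1\neq 0$ and $c:=-(M+\delta\cdot[\plc])\in\Z$. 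It is worth noting that any \emph{irreducible} subvariety is automatically semi-irreducible (use the constant family ${\cal X}=X^d_n\times(\C,0)$ and ${\cal Z}=\check Z\times(\C,0)$), so it suffices to realize $\gamma_M$ by an irreducible subvariety, or at worst by a cycle degenerating from an irreducible one.

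For $n=2$ this is classical. The quartic $X^4_2$ is a K3 surface; its intersection form has signature $(1,19)$, $[\plc]$ spans the positive ray and $[\plc]^\perp\cap\NS(X^4_2)$ is negative definite, whence $\delta_0^2<0$. Thus $\gamma_M\cdot[\plc]>0$ places $\gamma_M$ in the positive cone component of $[\plc]$, and a Cauchy--Schwarz estimate on $[\plc]^\perp$ shows $\gamma_M\cdot C>0$ for every $(-2)$-curve $C$ as soon as $M$ exceeds a bound depending only on $N$ and $\delta_0^2$; hence $\gamma_M$ is ample, and after replacing it by a suitable multiple (which only rescales $a$) it is very ample, so by Bertini a general member $\check Z\in|\gamma_M|$ is a smooth irreducible curve. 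This settles $n=2$.

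The case $n=4$ is where the real difficulty lies. Here $X^3_4$ is the Fermat cubic fourfold, $[\plc]$ is the class of a smooth cubic surface section, the intersection form is positive definite on $\hodge_4(X^3_4,\R)_\prim$ (so $\delta_0^2>0$ and $\gamma_M^2>0$ for all $M$), and by the Hodge conjecture for $X^3_4$ — known by \cite{zu77}, or by Theorem \ref{24.07.2016} since $d=3$ is prime — we may write $N\delta_0=\sum_i n_i[\Pn{2}_i]$ with linear cycles $\Pn{2}_i\subset X^3_4$. Since surfaces in a fourfold move in no linear system, the K3 argument is unavailable; the plan is instead to assemble, for a suitable large $M$, a reduced effective cycle of class $\gamma_M$ from flexible pieces — general complete-intersection surfaces of class a multiple of $[\plc]$, which are smooth and irreducible by Bertini, together with the planes $\Pn{2}_i$, exploiting that the union of two linear $\Pn{2}$'s meeting along a line is a degenerate quadric and hence a flat limit of smooth quadrics, i.e. of complete intersections of type $(1,1,2)$, the same mechanism that makes the $m=\frac n2-1$ case of Conjecture \ref{CanISayFinally?} trivial — and then to smooth the resulting configuration to an irreducible surface, if necessary inside a cubic fourfold infinitesimally near $X^3_4$. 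The delicate point, and what confines the theorem to the two pairs $(n,d)=(2,4),(4,3)$ (equivalently $h^{\frac n2+1,\frac n2-1}(X^d_n)=1$, so that every Hodge locus is a smooth divisor), is to control the obstructions to this smoothing — the first cohomology of the normal sheaf of the configuration in ${\cal X}$ — which I would attack by a Koszul-complex computation in the spirit of Theorem \ref{chilenos2017}, or, should a surface-level smoothing fail, by first exhibiting an irreducible surface of class $\gamma_M$ on a general member of the relevant component $V_{\gamma_M}\subset\T$ of the Hodge locus and then specializing it to the Fermat.
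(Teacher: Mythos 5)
Your route is genuinely different from the paper's, and it splits into two halves of very unequal solidity. The $n=2$ half is essentially fine: on the quartic K3 the class $\gamma_M=-N\delta_0+M[\plc]$ lies in $\NS(X^4_2)$ by Lefschetz $(1,1)$, your Cauchy--Schwarz bound does make $\gamma_M$ ample for $M\gg0$, and a multiple is very ample, so Bertini produces a smooth irreducible curve (which, as you note, is trivially semi-irreducible via the constant family). The $n=4$ half, however, contains the entire difficulty of the theorem and is not proved. You reduce to representing $\gamma_M=-\sum_i n_i[\Pn{2}_i]+M[\plc]$ by an irreducible (or degenerating-from-irreducible) surface in $X^3_4$, and then defer precisely that step: planes with negative coefficients cannot be absorbed into an effective configuration just by adding copies of $\plc$; the $(1,1,2)$-quadric trick only handles two planes meeting along a line; and the smoothing of a reducible configuration of surfaces in a fourfold is obstructed by $H^1$ of its normal sheaf, which you neither compute nor bound. "I would attack this by a Koszul computation" is a research plan, not an argument, so as written the $(4,3)$ case is open in your proposal.

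It is worth knowing that the paper avoids all of this with a much softer argument that never constructs anything positively. Since $h^{\frac n2+1,\frac n2-1}(X^d_n)=1$ in both cases, the Hodge locus $V_{[Z]}$ is cut out by the single integral $\int_{\delta_t}\omega_0$, whose linear part is nonzero because $\nabla_{\partial/\partial t_i}\omega_0=\omega_i$ and these span $F^1$; hence $V_{[Z]}$ is a smooth reduced hypersurface. The known Hodge conjecture (Lefschetz $(1,1)$, resp.\ Zucker) applied over $V_{[Z]}$ yields an algebraic family $Z_t=\sum_k n_kZ_{k,t}$ with $[Z_t]=\delta_t$ and $Z_{k,t}$ irreducible for generic $t$; the specialization $Z_{k,0}$ at the Fermat point is then semi-irreducible \emph{by definition}, with no smoothing to perform. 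The inclusion $V_{[Z]}\subset V_{[Z_{k,0}]}$ of germs forces the period vector of $[Z_{k,0}]$ to be proportional to that of $[Z]$ (two linear forms with nested kernels, the larger a hyperplane), i.e.\ $[Z_{k,0}]=a_k[Z]+b_k[\plc]$, and some $a_k\neq0$ because $[Z]\notin\Q[\plc]$. Your last-resort fallback ("exhibit an irreducible surface on a general member of $V_{\gamma_M}$ and specialize") is this argument in embryo; had you pursued it for $[Z]$ itself rather than for the auxiliary positive class $\gamma_M$, you would have found that the ampleness machinery is unnecessary even for $n=2$. If you want to salvage your constructive approach for the cubic fourfold, you would need an effectivity/irreducibility statement for sufficiently positive $4$-dimensional Hodge classes on cubic fourfolds, which is a substantial theorem in its own right and not supplied here.
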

\begin{proof}
 The algebraic cycle $Z$ induces a homology class $\delta_0=[Z]\in H_n(X^d_n,\Z)$ and the 
 Hodge locus $V_{\delta_0}$ is given by the zero locus
of a single integral $f(t):=\int_{\delta_t}\omega_0$, where $\omega_0$ is given by \eqref{11.06.2017} for $i=(0,0,\cdots,0)$.
By our hypothesis on $Z$, $f$ is not identically zero and since $\delta_0=[Z]$ it vanishes at $0\in\T$.  
We show that $V_{\delta_0}$ is smooth and reduced, and for this it is enough to show that the linear part of $f$ is 
not  identically zero. 
This follows from $\nabla_{\frac{\partial}{\partial t_i}}\omega_0=\omega_i,\ \ i\in I_d$, $I_d=I_{(\frac{n}{2}+1)d-n-2}$ and the fact
that $\omega_0,\omega_i,\ i\in I_d$ form a basis of $F^1$ of $H^n_\dR(X)$. Here, $\nabla$ is the Gauss-Manin connection of the family
of hypersurfaces given by \eqref{15dec2016}. 
The Hodge conjecture in both cases is well-known.
In the first case  it is the Lefschetz $(1,1)$ theorem and in the second case it  is a result of Zucker in \cite{zu77}.
This implies that  $\delta_t=[Z_t]$, where  $Z_t:=\sum_{i=1}^r n_iZ_{i,t},\  Z_{i,t}\subset X_t,\ t\in V_{\delta_0}, 
\dim(Z_{i,t})=\frac{n}{2}, \   n_i\in\Q$ and for generic $t$, $Z_{i,t}$ is irreducible. 
Since $V_{\delta_0}\subset V_{[Z_{i,0}]}$, we conclude that $[Z_{i,0}]=a_i[Z]+b_i[\plc]$ for some $a_i,b_i\in\Q$. 
By our hypothesis on $Z$, one of $a_i$'s is not zero let us call it $a_1$. We get $[Z]=a_1^{-1}[Z_{1,0}]-b_1a_1^{-1}[\plc]$. 
\end{proof}
In Theorem \ref{11Aug2017} let us assume that $Z$ is a sum of linear cycles. 
It would be useful to see whether the algebraic cycle $\check Z$ is a sum of  linear cycles. One might start  
with the sum of two lines in the Fermat surface $X^4_2$ without any common points 
(the case $(n,d,m)=(2,4,-1)$). 

\begin{figure}
\begin{center}
\includegraphics[width=0.6\textwidth]{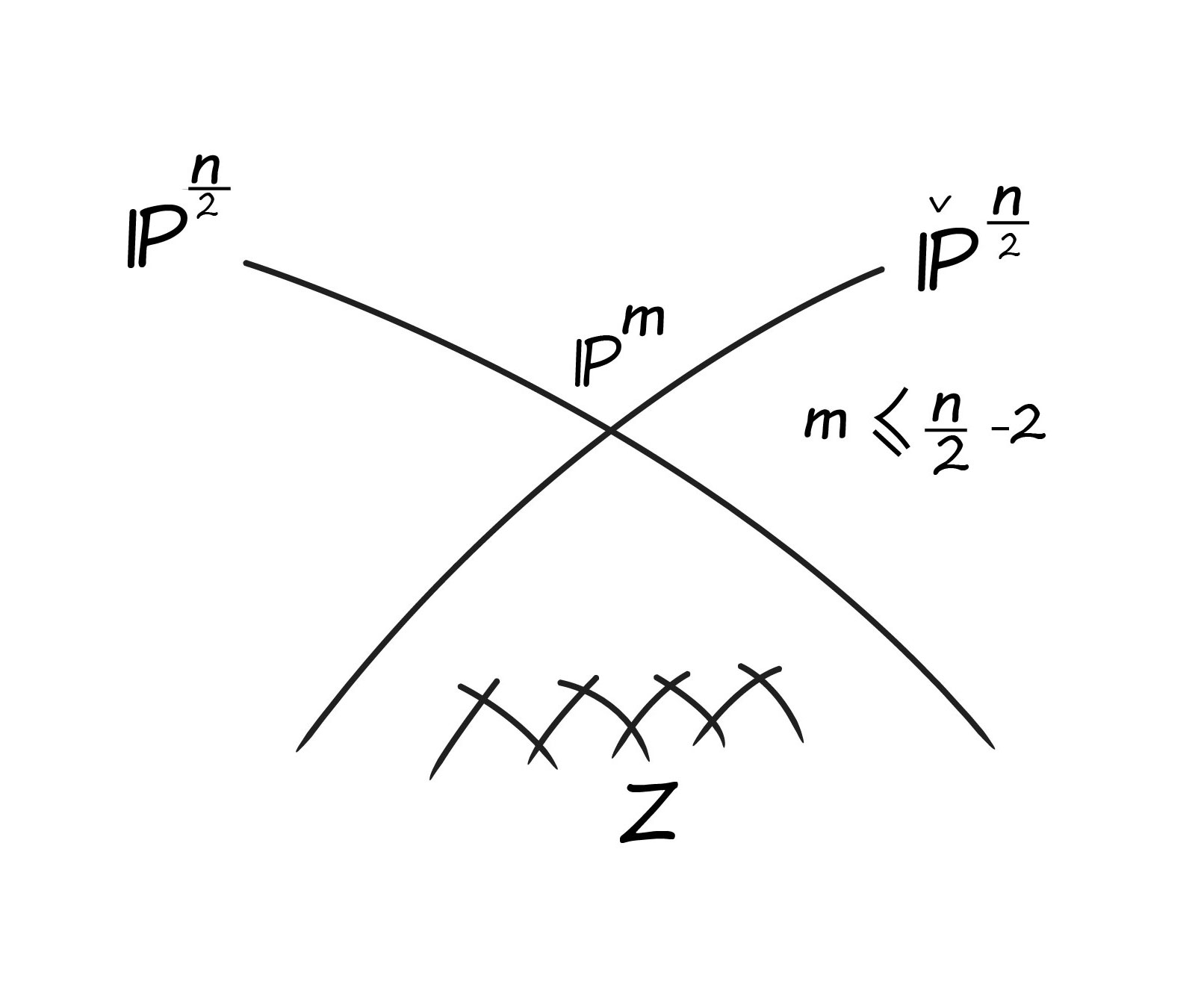}
\caption{Sum of linear cycles II}
\label{TwoP-2}
\end{center}
\end{figure}

\section{How to to deal with Conjecture \ref{CanISayFinally?}?}
\label{27.07.2017}
In this section we sketch a strategy to prove Conjecture \ref{CanISayFinally?} which 
follows the same guideline as of the proof of Theorem \ref{11Aug2017}.  
Let $\delta_0:=\cf[\P^{\frac{n}{2}}]+\check\cf[\check\P^{\frac{n}{2}}]\in H_n(X_n^d,\Z)$ with 
$\P^{\frac{n}{2}}\cap \check\P^{\frac{n}{2}}=\P^{m}, \ \ m=\frac{n}{2}-2$ and  $\mydim^d_n(m)<  \intdim^d_n(m)$.
Let also $\delta_t\in H_n(X_t,\Z),\ \ t\in (\T,0)$ be its monodromy to nearby fibers. 
Conjecture \ref{Duisburg2016}  implies that the intersection of $V_{\P^{\frac{n}{2}}}$ 
and $V_{\check\P^{\frac{n}{2}}}$ is a proper subset of  the underlying analytic variety of 
$V_{\delta_0}$. If the Hodge conjecture is
true then there is an algebraic family of algebraic cycles 
\begin{equation}
\label{sad2017}
Z_t:=\sum_{k=1}^r n_kZ_{k,t},\  Z_{k,t}\subset X_t,\ t\in V_{\delta_0},
\end{equation}
$$
\dim(Z_{k,t})=\frac{n}{2}, \   n_k\in\Z,\ 
$$
such that $Z_{k,t}$ is irreducible for generic $t$ and   
$Z_t$ is homologous to a non-zero integral  multiple of $\delta_t$, 
see Figure \ref{TwoP-2}.  
By Conjecture \ref{Duisburg2016} we know that $V_{\delta_0}$ is smooth and reduced, and so, 
we have the  inclusion of analytic schemes  
\begin{equation}
\label{12jan2017}
V_{\delta_0}\subset V_{[Z_{k,0}]},\ \ k=1,2,\ldots,r
\end{equation}
which implies that
\begin{equation}
\label{29.03.2017}
\ker[\pn_{i+j}(\delta_0)]\subset \ker[\pn_{i+j}(Z_{k,0})],\ \ \hbox{ and so }\  
\rank([\pn_{i+j}(Z_{k,0})])\leq \rank [\pn_{i+j}(\delta_0)].
\end{equation} 
In order to proceed, we consider the cases of Fermat varieties 
such that linear cycles generates the the space of Hodge cycles over rational numbers (these
are the cases in Theorem \ref{24.07.2016}), or we assume Conjecture \ref{TrainFreiburgBonn}
for $\Ho^d_n$ being the the lattice of periods of all Hodge cycles and not just linear cycles.  
We apply Conjecture  \ref{TrainFreiburgBonn} and we conclude that for some linear cycles 
$\P^{\frac{n}{2}}_k, \check \P^{\frac{n}{2}}_k$ in $X^d_n$ with 
$\P^{\frac{n}{2}}_k\cap\check \P^{\frac{n}{2}}_k=\P^{m_k}, \ \ m_k\geq \frac{n}{2}-2$ and 
$\cf_k,\check\cf_k,b_k,c_k\in \Z,\ c_k\not=0$ we have 
\begin{equation}
\label{18a2017}
\cf_k\P^{\frac{n}{2}}_k+\check\cf_k\check \P^{\frac{n}{2}}_k+b_k\plc+c_kZ_{k,0}\sim 0,
\end{equation}
where $\sim$ means homologous.  
The inclusion in \eqref{29.03.2017} and \eqref{18a2017} imply 
\begin{equation}
\label{mikham2017}
\ker[\pn_{i+j}(\cf\P^{\frac{n}{2}}+\check\cf\check \P^{\frac{n}{2}})]
\subset \ker[\pn_{i+j}(\cf_k\P^{\frac{n}{2}}_k+\check\cf_k\check \P^{\frac{n}{2}}_k )],\
\end{equation}
Now, Conjecture \ref{13jan2017}, 
\eqref{mikham2017} and the fact that $\cf$ and $\check\cf$ are coprime imply that for some non-zero integer 
$a$ we have 
$\cf_k\P^{\frac{n}{2}}_k+\check\cf_k\check \P^{\frac{n}{2}}_k=a(\cf\P^{\frac{n}{2}}+\check\cf 
\check \P^{\frac{n}{2}})$, as an equality of algebraic cycles, and hence   
$\{\P^{\frac{n}{2}}, \check \P^{\frac{n}{2}}\}= \{\P^{\frac{n}{2}}_k, \check \P^{\frac{n}{2}}_k\}  $. 
This means that in \eqref{sad2017} we can assume that 
$Z_t$ is irreducible  for generic $t$ and so we get 
\begin{equation}
\label{ObraPrimaDaSara}
 a(\cf\P^{\frac{n}{2}}+\check\cf\check \P^{\frac{n}{2}})+b\plc+cZ\sim 0,\ \ \ a,c\not=0, a,b,c\in\Z, 
 \end{equation}
where $Z=Z_0$. 
Taking the intersection of \eqref{ObraPrimaDaSara} with any third linear cycle $\tilde\P^\frac{n}{2}$ with 
$ \tilde\P^\frac{n}{2}\cdot \P^\frac{n}{2}=\tilde\P^\frac{n}{2}\cdot \check\P^\frac{n}{2}=0$ we get
$c\mid b$. Moreover,  taking the intersection of \eqref{ObraPrimaDaSara} with any third linear cycle $\tilde\P^\frac{n}{2}$ with 
$ \tilde\P^\frac{n}{2}\cdot \P^\frac{n}{2}=1$ and 
$\tilde\P^\frac{n}{2}\cdot \check\P^\frac{n}{2}=0$ we get $c\mid (a\cf+b)$. In a similar way, we have $c\mid (a\check\cf+b)$.
Since $\cf$ and $\check\cf$ are coprime we conclude that $c|a$ and, therefore, in \eqref{ObraPrimaDaSara}
we can assume that $c=-1$. 

One of the most important information about the algebraic cycle $Z\subset X^d_n$ is the data of its intersection numbers with other
algebraic cycles of the Fermat variety, and in particular all linear cycles. Recall that $\plc\cdot \plc=d$, 
for a linear cycle $\P^{\frac{n}{2}}\subset X^d_n$ we have $\plc\cdot \P^{\frac{n}{2}}=1$, and for two linear cycles 
$\P^{\frac{n}{2}}$ and $\check\P^{\frac{n}{2}}$ with $ \P^{\frac{n}{2}}\cap\check\P^{\frac{n}{2}}=\P^m$ we have
 \begin{equation}
 \label{13jan16-2}
\P^{\frac{n}{2}}\cdot \check\P^{\frac{n}{2}}=
\frac{1-(-d+1)^{m+1} }{d}
\end{equation}
This follows from the adjunction formula, see for instance \cite{ho13}, Section 17.6.  Using this we know $a$ and $b$: 
$$
b=Z\cdot \tilde\P^{\frac{n}{2}},\ \hbox{ where } 
\tilde\P^\frac{n}{2}\cdot \P^\frac{n}{2}=\tilde\P^\frac{n}{2}\cdot \check\P^\frac{n}{2}=0, 
$$
and 
$$
\deg(Z)=a\cdot(\cf+\check\cf)+b\cdot d. 
$$
In particular, if $(\cf,\check\cf)=(1,-1)$ then the degree $d$ of the Fermat variety divides the degree of $Z$. 
Another important information about the algebraic cycle $Z$ is a lower bound of the dimension of  the Hilbert scheme parameterizing 
deformations of the pair $(X^d_n,Z)$. 
One may look for the classification of the components of the Hilbert schemes of projective varieties in order to see whether such a $Z$
exists or not. For instance, we know that if $Z\subset \Pn {n+1}$ is an irreducible reduced projective variety of dimension 
$\frac{n}{2}$ and degree $2$ then it is necessarily a complete intersection of type $1^{\frac{n}{2}},2$, see \cite{EisenbudHarris1985}. 
One might look for  generalizations of this kind of results.

\section{Final comments}
\label{10.08.2017}

One of the main difficulties in generalizing our main theorems in Introduction for other cases 
is that the moduli of hypersurfaces of dimension $n$ and degree $d$ is of dimension 
\label{10ago2017}
$\#I_d=\bn{d+n+1}{n+1}-(n+2)^2$
which is two big even for small values of  $n$ and $d$. One has to prepare similar tables as 
in Table \ref{12mar2017} with smaller number of parameters and then start to analyze $N$-smoothness. 
For some suggestions see \cite{ho13} Exercises 15.13, 15.16, 15.17. The author has analyzed statements 
similar to Theorem \ref{WillCome2017} and Theorem \ref{dream2017} for hypersurfaces given by homogeneous polynomials of the form 
\begin{equation}
 f:=A(x_0,x_2,\ldots,x_n)+B(x_1,x_3,\ldots,x_{n+1}).
\end{equation}
The moduli of such hypersurfaces is of dimension $2\cdot \binom{d+\frac{n}{2}}{\frac{n}{2}}-2(\frac{n}{2}+1)^2$ and this makes 
the computations much faster. Here are some sample results mainly in direction of Theorem \ref{dream2017}. 
The Hodge locus $V_{\cf[\P^{\frac{n}{2}}]+\check\cf[\check\P^{\frac{n}{2}}] }$ for $\cf,\check\cf$ coprime non-zero integers and 
$|\cf|,|\check\cf|\leq 10$ is $7$-smooth  and $4$-smooth for  
$(n,d,m)=(6,3,1)$ and $(4,4,0)$, respectively. Therefore, it seems that we are in situations similar to Theorem \ref{27.03.17}.
For $(n,d,m)=(8,3,2),(10,3,3)$ the situation is similar to Theorem \ref{WillCome2017} and Theorem \ref{dream2017}. 
Such a Hodge locus is not $3$-smooth except  for $(\cf,\check\cf)=(1,\pm 1)$  for which 
we have even $4$-smoothness in the case $(8,3,2)$.
The coefficients of the Taylor series in Theorem \ref{InLabelNadasht?} seem to be defined in a reasonable ring,
for instance, for $(n,d)=(4,3),(6,3)$ and some sample truncated Taylor series,  the ring of coefficients is 
$\Z[\frac{1}{d},\zeta_{2d}]$. If so, one may consider them modulo  prime ideals, and in this way, study many related conjectures. 
The tools introduced in this article can be used in order to answer the following question which produces an explicit  
counterexample  to a conjecture of J. Harris:
determine the integer $d$ (conjecturally less than $10$) 
such that  the Noether-Lefschetz
locus of surfaces of degree $d$  (resp degree $<d$) has infinite (resp. finite) number of
special components crossing the Fermat point. Notice that Voisin's counterexample  in
\cite{voisin1991} is for a very big $d$. 
This problem will be studied in subsequent articles. For this  and its 
generalization to higher dimensions one needs to classify linear combination
of linear cycles in the Fermat variety which are semi-irreducible. 
The combinatorics of  arrangement of linear cycles seems to play some role in this question. 
The author's favorite examples in this article have been cubic varieties, see Manin's book \cite{Manin86} for an overview of some
results and techniques. Cubic surfaces carry the famous $27$-lines which is exactly the number \eqref{28.06.2017} of
linear cycles for the Fermat cubic surface.  
Hodge conjecture is known for cubic fourfolds (see \cite{zu77}), and for a restricted class of cubic $8$-folds 
the Hodge conjecture is also known (see \cite{Terasoma90}). In general the Hodge conjecture remains open for cubic hypersurfaces of dimension
$n\geq 6$. Conjecture \ref{CanISayFinally?} makes sense starting from cubic tenfolds whose moduli is $220$-dimensional. It might be useful
to review all the results in this case and to see what one can say more about the algebraic cycle $Z$ in this conjecture.

\def\cprime{$'$} \def\cprime{$'$} \def\cprime{$'$} \def\cprime{$'$}


\end{document}